\numberwithin{equation}{section}
\newtheorem{theorem}{Theorem}[section]
\newtheorem{corollary}[theorem]{Corollary}
\newtheorem{lemma}[theorem]{Lemma}
\newtheorem{prop}[theorem]{Proposition}
\newtheorem{example}[theorem]{Example}
\theoremstyle{definition}
\newtheorem{remark}[theorem]{Remark}
\theoremstyle{definition}
\theoremstyle{definition}
\def\dashint{\operatorname%
{\,\,\text{\bf-}\kern-.98em\DOTSI\intop\ilimits@\!\!}}
\def\\det{\text{\det}}
\def\.5{\frac{1}{2}}
\newcommand{\RN}[1]{%
  \textup{\uppercase\expandafter{\romannumeral#1}}%
}
\renewcommand{\epsilon}{\varepsilon}
\newcounter{marnote}
\begin{document}

\title[Asymptotic analysis of the stress concentration]{Asymptotic analysis of the stress concentration between two adjacent stiff inclusions in all dimensions}

\author[Z.W. Zhao]{Zhiwen Zhao}

\address[Z.W. Zhao]{Beijing Computational Science Research Center, Beijing 100193, China.}
%\address{2. School of Mathematical Sciences, Beijing Normal University, Beijing 100875, China.}
\email{zwzhao365@163.com}

\author[X. Hao]{Xia Hao}
\address[X. Hao]{School of Mathematical Sciences, Beijing Normal University, Beijing 100875, China. }
\email{xiahao0915@163.com}

%\footnote{}

\date{\today} % delete this line to display the current date

%%% BEGIN DOCUMENT

\maketitle
%\tableofcontents
\begin{abstract}
In the region between two closely located stiff inclusions, the stress, which is the gradient of a solution to the Lam\'{e} system with partially infinite coefficients, may become arbitrarily large as the distance between interfacial boundaries of inclusions tends to zero. The primary aim of this paper is to give a sharp description in terms of the asymptotic behavior of the stress concentration, as the distance between interfacial boundaries of inclusions goes to zero. For that purpose we capture all the blow-up factor matrices, whose elements comprise of some certain integrals of the solutions to the case when two inclusions are touching. Then we are able to establish the asymptotic formulas of the stress concentration in the presence of two close-to-touching $m$-convex inclusions in all dimensions. Furthermore, an example of curvilinear squares with rounded-off angles is also presented for future application in numerical computations and simulations.
\end{abstract}

\section{Introduction}%\label{intro}
%From the perspective of industrial applications, it is significantly important to pursue a precise characterization for the singular behavior of such high concentration. The main objective of this paper is to
We consider a mathematical model of a high-contrast fiber-reinforced composite material which consists of a matrix and densely packed inclusions. The problem is described by elliptic systems, especially the Lam\'{e} systems. The most important quantities from an engineering point of view is the stress, which is the gradient of a solution to the Lam\'{e} systems. If two inclusions are nearly touching, then the stress field may concentrate highly in the thin gap between them. Over the past twenty years, much effort has been dedicated to quantitative understanding of such high concentration. In this paper, we continue the investigation on this topic and aim to establish the asymptotic formulas of the stress concentration in the presence of $m$-convex inclusions in all dimensions. Moreover, we also give more precise characterization for this high concentration in the presence of two closely located curvilinear squares with rounded-off angles in two dimensions. This type of axisymmetric inclusions is related to the ``Vigdergauz microstructure'' in the shape optimization of fibers, which was extensively studied in previous work \cite{GK1995,GB2005,G1996,V1986}.

Our initial interest on the study of high stress concentration arising from composite material was motivated by the great work of Babu\u{s}ka et al. \cite{BASL1999}, where numerical computation of damage and fracture in linear composite systems was investigated. They observed numerically that the gradient of a solution to certain homogeneous isotropic linear systems of elasticity remains bounded regardless of the distance between inclusions. Subsequently, Li and Nirenberg proved this in \cite{LN2003} for general second-order elliptic systems including systems of elasticity with piecewise smooth coefficients, where stronger $C^{1,\alpha}$ estimates were established. We also refer to \cite{BV2000,LV2000,DL2019} for scalar divergence form second-order elliptic equations. It is worthwhile to emphasize that the equations under consideration in \cite{DL2019} are non-homogeneous. Kim and Lim \cite{KL2019} recently used the single and double layer potentials with image line charges to establish an asymptotic expansion of the potential function for core-shell geometry with circular boundaries in two dimensions. Calo, Efendiev and Galvis \cite{CEG2014} studied both high- and low-conductivity inclusions and established an asymptotic formula of a solution to elliptic equations with respect to the contrast $k$ of sufficiently small or large quantity.

For the purpose of making clear the stress concentration of composite materials, we always first consider its simplified conductivity model of electrostatics and analyze the singular behavior of the electric field, which is the gradient of a solution to the Laplace equation. Denote by $\varepsilon$ the distance between interfacial boundaries of inclusions. In the case of $2$-convex inclusions, it has been proved that the generic blow-up rates of the electric field are $\varepsilon^{-1/2}$ in two dimensions \cite{AKLLL2007,BC1984,BLY2009,AKL2005,Y2007,Y2009,K1993}, $|\varepsilon\ln\varepsilon|^{-1}$ in three dimensions \cite{BLY2009,LY2009,BLY2010,L2012}, and $\varepsilon^{-1}$ in dimensions greater than three \cite{BLY2009}, respectively. We would like to point out that Bao, Li and Yin \cite{BLY2009} also studied the generalized $m$-convex inclusions, especially when $m>2$. Besides these gradient estimates, there has been a lot of literature, starting from \cite{KLY2013}, on the asymptotic behavior of the concentrated field. In two dimensions, Kang et al. \cite{KLY2013} gave a complete characterization in terms of the singularities of the electric field for two neighbouring disks. Subsequently, Ammari et al. \cite{ACKLY2013} utilzied the method of disks osculating to convex domains to extend their results to the case of abritrary $2$-convex inclusions. In three dimensions, Kang et al. \cite{KLY2014} established a precise characterization for the electric field in the presence of two spherical conductors. Kang, Lee and Yun \cite{KLY2015} captured a stress concentration factor for the generalized $m$-convex inclusions as the distance between two inclusions tends to zero. Li, Li and Yang \cite{LLY2019} presented a precise calculation of the energy and obtained a sharp characterization for the concentrated field with two adjacent $2$-convex inclusions in dimensions two and three. Li \cite{Li2020} then extended the asymptotic results to the case of $m$-convex inclusions and captured a unified blow-up factor different from that in \cite{LLY2019}. The subsequent work \cite{ZH202101} completely solved the optimality of the blow-up rate for two adjacent $m$-convex inclusions in all dimensions. Bonnetier and Triki \cite{BT2013} obtained the asymptotics of the eigenvalues for the Poincar\'{e} variational problem in the presence of two closely located inclusions as the distance between these two inclusions tends to zero. For the nonlinear equation, see \cite{CS2019,CS20192,G2015,GN2012}.

In recent years, there has also made significant progress on the study of singular behavior of the stress concentration in the context of the full elasticity. Due to the fact that some techniques such as the maximum principle used in the scalar equations cannot apply to linear systems of elasticity, a delicate iterate technique with respect to the energy was built in \cite{LLBY2014} to overcome these difficulties. Li, Li, Bao and Yin \cite{LLBY2014} showed that the gradients for solutions to a class of elliptic systems with the same boundary data on the upper and bottom boundaries of the narrow region possess the exponentially decaying property. Bao et al. \cite{BLL2015,BLL2017} then utilized the iterate technique to establish the pointwise upper bound estimates on the gradient and captured the stress blow-up rate for two close-to-touching $2$-convex inclusions. Subsequently, Li \cite{L2018} demonstrated the optimality of the blow-up rate in dimensions two and three by finding a unified stress concentration factor to construct a lower bound of the gradient. Li and Xu \cite{LX2020} further improved the results in \cite{L2018} to be precise asymptotic formulas. However, the stress concentration factors, which are vital to the establishments of the lower bounds and asymptotic expansions on the gradients, are captured in \cite{L2018,LX2020} only when the domain and the boundary data satisfy some special symmetric conditions. Subsequently, Miao and Zhao \cite{MZ2021} got ride of those strict symmetric conditions by accurately constructing the unified stress concentration factors for the generalized $m$-convex inclusions in all dimensions and then established the optimal upper and lower bounds on the gradient. Their results gave a perfect answer for the optimality of the stress blow-up rate in any dimension. In addition, we refer to \cite{BJL2017,LZ2019,MZ202102} for the corresponding boundary estimates.

Kang and Yu \cite{KY2019} recently constructed singular functions for the two-dimensional Lam\'{e} systems and then gave a complete characterization in terms of the singular behavior of the gradient. These singular functions were used to study the effective property of the composite in their subsequent work \cite{KY2020}. Specifically, they utilized the primal-dual variational principle and the singular functions to rigorously prove the Flaherty-Keller formula for the effective elastic properties, where the Flaherty-Keller formula was previously investigated in \cite{FK1973}. It is worth mentioning that Ando, Kang and Miyanishi \cite{AKM2020} recently studied the Neumann-Poincar\'{e} type operator associated with the Lam\'{e} system and proved that its eigenvalues converge at a polynomial rate if the boundary of the domain is smooth and at an exponential rate on real analytic boundaries, respectively.

In this paper, we make use of all the systems of equations in linear decomposition to capture all the blow-up factor matrices in any dimension and then establish the precise asymptotic expressions of the concentrated field for two adjacent $m$-convex inclusions in all dimensions. This is different from that in \cite{L2018,LX2020,MZ2021}, which pursued to use a family of unified blow-up factors to characterize the singular behavior of the stress concentration.

Finally, we present an overview of the rest of this paper. In Section \ref{SEC002}, we give a mathematical formulation of the problem and list the main results, including Theorems \ref{Lthm066}, \ref{ZHthm002} and \ref{ZHthm003}. In Section \ref{SEC003}, we review some preliminary facts on linear decomposition and construction of the auxiliary function. Section \ref{SEC004} is dedicated to the proofs of Theorems \ref{Lthm066},  \ref{ZHthm002} and \ref{ZHthm003}. Example \ref{coro00389} is presented in Section \ref{SEC006}.

\section{Problem setting and main results}\label{SEC002}
\subsection{Mathematical formulation of the problem}
To formulate our problem and state the main results in a precise manner, we first describe our domain and introduce some notations. Let $D\subseteq\mathbb{R}^{d}\,(d\geq2)$ be a bounded domain with $C^{2,\alpha}\,(0<\alpha<1)$ boundary. Assume that there is a pair of $C^{2,\alpha}$-subdomains $D_{1}^{\ast}$ and $D_{2}$ inside $D$, such that these two subdomains touch only at one point and they are far away from the external boundary $\partial D$. That is, by a translation and rotation of the coordinates, if necessary,
\begin{align*}
\partial D_{1}^{\ast}\cap\partial D_{2}=\{0\}\subset\mathbb{R}^{d},
\end{align*}
and
\begin{align*}
D_{1}^{\ast}\subset\{(x',x_{d})\in\mathbb{R}^{d}|\,x_{d}>0\},\quad D_{2}\subset\{(x',x_{d})\in\mathbb{R}^{d}|\,x_{d}<0\}.
\end{align*}
Throughout the paper, we use superscript prime to denote ($d-1$)-dimensional domains and variables, such as $B'$ and $x'$. Via a translation, we set
\begin{align*}
D_{1}^{\varepsilon}:=D_{1}^{\ast}+(0',\varepsilon),
\end{align*}
where $\varepsilon>0$ is a sufficiently small constant. For ease of notation, we drop superscripts and denote
\begin{align*}
D_{1}:=D_{1}^{\varepsilon},\quad\mathrm{and}\quad\Omega:=D\setminus\overline{D_{1}\cup D_{2}}.
\end{align*}

Let $\Omega$ and $D_{1}\cup D_{2}$ be occupied, respectively, by two different isotropic and homogeneous elastic materials with different Lam$\mathrm{\acute{e}}$ constants $(\lambda,\mu)$ and $(\lambda_{1},\mu_{1})$. The elasticity tensors $\mathbb{C}^0$ and $\mathbb{C}^1$ corresponding to $\Omega$ and $D_{1}\cup D_{2}$ are expressed, respectively, as
$$C_{ijkl}^0=\lambda\delta_{ij}\delta_{kl} +\mu(\delta_{ik}\delta_{jl}+\delta_{il}\delta_{jk}),$$
and
$$C_{ijkl}^1=\lambda_1\delta_{ij}\delta_{kl} +\mu_1(\delta_{ik}\delta_{jl}+\delta_{il}\delta_{jk}),$$
where $i, j, k, l=1,2,...,d$ and $\delta_{ij}$ represents the kronecker symbol: $\delta_{ij}=0$ for $i\neq j$, $\delta_{ij}=1$ for $i=j$. Denote the elastic displacement field by $u=(u^{1},u^{2},...,u^{d})^{T}:D\rightarrow\mathbb{R}^{d}$. Let $\chi_{\Omega}$ be the characteristic function of $\Omega\subset\mathbb{R}^{d}$. Given a boundary data $\varphi=(\varphi^{1},\varphi^{2},...,\varphi^{d})^{T}$, we consider the Dirichlet problem for the Lam$\mathrm{\acute{e}}$ system with piecewise constant coefficients
\begin{align}\label{La.001}
\begin{cases}
\nabla\cdot \left((\chi_{\Omega}\mathbb{C}^0+\chi_{D_{1}\cup D_{2}}\mathbb{C}^1)e(u)\right)=0,&\hbox{in}~~D,\\
u=\varphi, &\hbox{on}~~\partial{D},
\end{cases}
\end{align}
where $e(u)=\frac{1}{2}\left(\nabla u+(\nabla u)^{T}\right)$ is the elastic strain. It is well known that there is a unique variational solution $u\in H^{1}(D;\mathbb{R}^{d})$ of problem (\ref{La.001}) for $\varphi\in H^{1}( D;\mathbb{R}^{d})$, under the strong ellipticity condition
\begin{align*}%\label{ellipticity}
\mu>0,\quad d\lambda+2\mu>0,\quad \mu_1>0,\quad d\lambda_1+2\mu_1>0,
\end{align*}
and $\nabla u$ is piecewise H\"older continuous, see \cite{LN2003, BLL2015}.

Let
\begin{align}\label{LAK01}
\Psi:=\{\psi\in C^1(\mathbb{R}^{d}; \mathbb{R}^{d})\ |\ \nabla\psi+(\nabla\psi)^T=0\}
\end{align}
be the linear space of rigid displacement in $\mathbb{R}^{d}$. It is known that
\begin{align}\label{OPP}
\left\{\;e_{i},\;x_{k}e_{j}-x_{j}e_{k}\;\big|\;1\leq\,i\leq\,d,\;1\leq\,j<k\leq\,d\;\right\}
\end{align}
is a basis of $\Psi$, with $\{e_{1},...,e_{d}\}$ representing the standard basis of $\mathbb{R}^{d}$. We rewrite them as $\left\{\psi_{\alpha}\big|\,\alpha=1,2,...,\frac{d(d+1)}{2}\right\}$. For the convenience of computations, we adopt the following order with respect to $\psi_{\alpha}$: for $\alpha=1,2,...,d$, $\psi_{\alpha}=e_{\alpha}$; for $\alpha=d+1,...,2d-1$, $\psi_{\alpha}=x_{d}e_{\alpha-d}-x_{\alpha-d}e_{d}$; for $\alpha=2d,...,\frac{d(d+1)}{2}\,(d\geq3)$, there exist two indices $1\leq i_{\alpha}<j_{\alpha}<d$ such that
$\psi_{\alpha}=(0,...,0,x_{j_{\alpha}},0,...,0,-x_{i_{\alpha}},0,...,0)$.

Fix $\lambda$ and $\mu$ and let $u_{\lambda_{1},\mu_{1}}$ be the solution of (\ref{La.001}). As shown in the Appendix of \cite{BLL2015}, we know that
\begin{align*}%\label{limit}
u_{\lambda_1,\mu_1}\rightarrow u\quad\hbox{in}\ H^1(D; \mathbb{R}^{d}),\quad \hbox{as}\ \min\{\mu_1, d\lambda_1+2\mu_1\}\rightarrow\infty,
\end{align*}
where $u\in H^1(D; \mathbb{R}^{d})$ solves
\begin{align}\label{La.002}
\begin{cases}
\mathcal{L}_{\lambda, \mu}u:=\nabla\cdot(\mathbb{C}^0e(u))=0,\quad&\hbox{in}\ \Omega,\\
u|_{+}=u|_{-},&\hbox{on}\ \partial{D}_{i},\,i=1,2,\\
e(u)=0,&\hbox{in}~D_{i},\,i=1,2,\\
\int_{\partial{D}_{i}}\frac{\partial u}{\partial \nu_0}\big|_{+}\cdot\psi_{\alpha}=0,&i=1,2,\,\alpha=1,2,...,\frac{d(d+1)}{2},\\
u=\varphi,&\hbox{on}\ \partial{D}.
\end{cases}
\end{align}
The co-normal derivative $\frac{\partial u}{\partial \nu_0}\big|_{+}$ is defined by
\begin{align*}
\frac{\partial u}{\partial \nu_0}\Big|_{+}&:=(\mathbb{C}^0e(u))\nu=\lambda(\nabla\cdot u)\nu+\mu(\nabla u+(\nabla u)^T)\nu,
\end{align*}
and $\nu$ represents the unit outer normal of $\partial D_{i}$, $i=1,2$. Here and throughout this paper the subscript $\pm$ shows the limit from outside and inside the domain, respectively. We refer to \cite{BLL2015} for the existence, uniqueness and regularity of weak solutions to (\ref{La.002}). Furthermore, the $H^{1}$ weak solution $u$ of problem (\ref{La.002}) has been proved to be in $C^1(\overline{\Omega};\mathbb{R}^{d})\cap C^1(\overline{D}_{1}\cup\overline{D}_{2};\mathbb{R}^{d})$ for any $C^{2,\alpha}$-domains.

We further assume that there exists a positive constant $R$, independent of $\varepsilon$, such that $\partial D_{1}$ and $\partial D_{2}$ near the origin can be represented, respectively, by the graphs of $x_{d}=\varepsilon+h_{1}(x')$ and $x_{d}=h_{2}(x')$, and $h_{i}$, $i=1,2$, verify that for $m\geq2$,
\begin{enumerate}
{\it\item[(\bf{H1})]
$h_{1}(x')-h_{2}(x')=\tau|x'|^{m}+O(|x'|^{m+\sigma}),\;\mathrm{if}\;x'\in B_{2R}',$
\item[(\bf{H2})]
$|\nabla_{x'}^{j}h_{i}(x')|\leq \kappa_{1}|x'|^{m-j},\;\mathrm{if}\;x'\in B_{2R}',\;i,j=1,2,$
\item[(\bf{H3})]
$\|h_{1}\|_{C^{2,\alpha}(B'_{2R})}+\|h_{2}\|_{C^{2,\alpha}(B'_{2R})}\leq \kappa_{2},$}
\end{enumerate}
where $\tau$, $\sigma$ and $\kappa_{i},i=1,2$, are four positive constants independent of $\varepsilon$. Furthermore, we suppose that for $x'\in B_{R}'$, $h_{1}(x')-h_{2}(x')$ is even with respect to $x_{i}$, $i=1,...,d-1$. For $z'\in B'_{R}$ and $0<t\leq2R$, denote the thin gap by
\begin{align*}
\Omega_{t}(z'):=&\left\{x\in \mathbb{R}^{d}~\big|~h_{2}(x')<x_{d}<\varepsilon+h_{1}(x'),~|x'-z'|<{t}\right\}.
\end{align*}
We adopt the abbreviated notation $\Omega_{t}$ for $\Omega_{t}(0')$ and denote its top and bottom boundaries by
\begin{align*}
\Gamma^{+}_{t}:=\left\{x\in\mathbb{R}^{d}|\,x_{d}=\varepsilon+h_{1}(x'),\;|x'|<r\right\},~~
\Gamma^{-}_{t}:=\left\{x\in\mathbb{R}^{d}|\,x_{d}=h_{2}(x'),\;|x'|<r\right\},
\end{align*}
respectively.

We first introduce a Keller-type scalar auxiliary function $\bar{v}\in C^{2}(\mathbb{R}^{d})$ such that $\bar{v}=1$ on $\partial D_{1}$, $\bar{v}=0$ on $\partial D_{2}\cup\partial D$,
\begin{align}\label{zh001}
\bar{v}(x',x_{d}):=\frac{x_{d}-h_{2}(x')}{\varepsilon+h_{1}(x')-h_{2}(x')},\;\,\mathrm{in}\;\Omega_{2R},\quad\mbox{and}~\|\bar{v}\|_{C^{2}(\Omega\setminus\Omega_{R})}\leq C.
\end{align}
Define a family of vector-valued auxiliary functions as follows: for $\alpha=1,2,...,\frac{d(d+1)}{2}$,
\begin{align}\label{zzwz002}
\bar{u}_{1}^{\alpha}=&\psi_{\alpha}\bar{v}+\mathcal{F}_{\alpha},\quad \mathrm{in}\;\Omega_{2R},
\end{align}
where $\psi_{\alpha}$ is defined in (\ref{OPP}) and
\begin{align}\label{QLA001}
\mathcal{F}_{\alpha}=\frac{\lambda+\mu}{\mu}f(\bar{v})\psi^{d}_{\alpha}\sum^{d-1}_{i=1}\partial_{x_{i}}\delta\,e_{i}+\frac{\lambda+\mu}{\lambda+2\mu}f(\bar{v})\sum^{d-1}_{i=1}\psi^{i}_{\alpha}\partial_{x_{i}}\delta\,e_{d},
\end{align}
and
\begin{align}\label{deta}
\delta(x'):=\varepsilon+h_{1}(x')-h_{2}(x'),\quad f(\bar{v}):=\frac{1}{2}\left(\bar{v}-\frac{1}{2}\right)^{2}-\frac{1}{8}.
\end{align}
We here remark that the correction terms $\mathcal{F}_{\alpha}$, $\alpha=1,2,...,d$, were captured in the previous work \cite{LX2020}.
\subsection{Main results}
To state the main results of this paper, we first fix some notations. Denote
\begin{align*}
\Gamma\Big[\frac{d+i-1}{m}\Big]:=&
\begin{cases}
\Gamma(1-\frac{d+i-1}{m})\Gamma(\frac{d+i-1}{m}),&m>d+i-1,\\
1,&m=d+i-1,
\end{cases}
\end{align*}
where $\Gamma(s)=\int^{+\infty}_{0}t^{s-1}e^{-t}dt$, $s>0$ is the Gamma function. Introduce the definite constants as follows:
\begin{align}\label{WEN}
\mathcal{M}_{0}=\frac{(d-1)\omega_{d-1}\Gamma[\frac{d-1}{m}]}{m\tau^{\frac{d-1}{m}}},\quad\mathcal{M}_{2}=\frac{\omega_{d-1}\Gamma[\frac{d+1}{m}]}{m\tau^{\frac{d+1}{m}}},
\end{align}
where $\omega_{d-1}$ represents the volume of $(d-1)$-dimensional unit ball and $\tau$ is defined in condition ({\bf{H1}}). Let
\begin{align}
&(\mathcal{L}_{2}^{1},\mathcal{L}_{2}^{2},\mathcal{L}_{2}^{3})=(\mu,\lambda+2\mu,\lambda+2\mu),\quad d=2,\label{AZ}\\
&(\mathcal{L}_{d}^{1},...,\mathcal{L}_{d}^{d-1},\mathcal{L}_{d}^{d},...,\mathcal{L}_{d}^{2d-1},\mathcal{L}_{d}^{2d},...,\mathcal{L}_{d}^{\frac{d(d+1)}{2}})\notag\\
&=(\mu,...,\mu,\lambda+2\mu,...,\lambda+2\mu,2\mu,...,2\mu),\quad d\geq3.\label{AZ110}
\end{align}
In addition, assume that for some constant $\kappa_{3}>0$,
\begin{align}\label{ITERA}
\kappa_{3}\leq\mu,d\lambda+2\mu\leq\frac{1}{\kappa_{3}}.
\end{align}
As for the blow-up rate indices, we denote, for $i=0,2$,
\begin{align}\label{rate00}
\rho_{i}(d,m;\varepsilon):=&
\begin{cases}
\varepsilon^{\frac{d+i-1}{m}-1},&m>d+i-1,\\
|\ln\varepsilon|,&m=d+i-1,\\
1,&m<d+i-1.
\end{cases}
\end{align}

Let $\Omega^{\ast}:=D\setminus\overline{D_{1}^{\ast}\cup D_{2}}$. For $i,j=1,2,\,\alpha,\beta=1,2,...,\frac{d(d+1)}{2}$, we define
\begin{align*}
a_{ij}^{\ast\alpha\beta}=\int_{\Omega^{\ast}}(\mathbb{C}^0e(v_{i}^{\ast\alpha}), e(v_j^{\ast\beta}))dx,\quad b_i^{\ast\alpha}=-\int_{\partial D}\frac{\partial v_{i}^{\ast\alpha}}{\partial \nu_0}\large\Big|_{+}\cdot\varphi,
\end{align*}
where $\varphi\in C^{2}(\partial D;\mathbb{R}^{d})$ is a given function and $v_{i}^{\ast\alpha}\in{C}^{2}(\Omega^{\ast};\mathbb{R}^d)$, $i=1,2$, $\alpha=1,2,...,\frac{d(d+1)}{2}$, respectively, solve
\begin{equation}\label{qaz001111}
\begin{cases}
\mathcal{L}_{\lambda,\mu}v_{1}^{\ast\alpha}=0,&\mathrm{in}~\Omega^{\ast},\\
v_{1}^{\ast\alpha}=\psi^{\alpha},&\mathrm{on}~\partial{D}_{1}^{\ast}\setminus\{0\},\\
v_{1}^{\ast\alpha}=0,&\mathrm{on}~\partial{D_{2}}\cup\partial{D},
\end{cases}\quad
\begin{cases}
\mathcal{L}_{\lambda,\mu}v_{2}^{\ast\alpha}=0,&\mathrm{in}~\Omega^{\ast},\\
v_{2}^{\ast\alpha}=\psi^{\alpha},&\mathrm{on}~\partial{D}_{2},\\
v_{2}^{\alpha}=0,&\mathrm{on}~(\partial{D_{1}^{\ast}}\setminus\{0\})\cup\partial{D}.
\end{cases}
\end{equation}
We here would like to point out that the definition of $a_{ij}^{\ast\alpha\beta}$ is valid only in some cases, see Lemma \ref{lemmabc} below for further details. Define the blow-up factor matrices as follows:
\begin{align}
&\mathbb{A}^{\ast}=(a_{11}^{\ast\alpha\beta})_{\frac{d(d+1)}{2}\times\frac{d(d+1)}{2}},\quad \mathbb{B}^{\ast}=\bigg(\sum\limits^{2}_{i=1}a_{i1}^{\ast\alpha\beta}\bigg)_{\frac{d(d+1)}{2}\times\frac{d(d+1)}{2}},\notag\\
&\mathbb{C}^{\ast}=\bigg(\sum\limits^{2}_{j=1}a_{1j}^{\ast\alpha\beta}\bigg)_{\frac{d(d+1)}{2}\times\frac{d(d+1)}{2}},\quad \mathbb{D}^{\ast}=\bigg(\sum\limits^{2}_{i,j=1}a_{ij}^{\ast\alpha\beta}\bigg)_{\frac{d(d+1)}{2}\times\frac{d(d+1)}{2}},\label{WEN002}
\end{align}
and for $\alpha=1,2,...,\frac{d(d+1)}{2}$,
\begin{gather}
\mathbb{F}^{\ast\alpha}=\begin{pmatrix} b_{1}^{\ast\alpha}&\sum\limits^{2}_{i=1}a_{i1}^{\ast\alpha1}&\cdots&\sum\limits^{2}_{i=1}a_{i1}^{\ast\alpha\frac{d(d+1)}{2}} \\ \sum\limits^{2}_{i=1}b_{i}^{\ast1}&\sum\limits_{i,j=1}^{2}a_{ij}^{\ast11}&\cdots&\sum\limits_{i,j=1}^{2}a_{ij}^{\ast1\frac{d(d+1)}{2}} \\ \vdots&\vdots&\ddots&\vdots\\
\sum\limits^{2}_{i=1}b_{i}^{\ast\frac{d(d+1)}{2}}&\;\;\sum\limits^{2}_{i,j=1}a_{ij}^{\ast\frac{d(d+1)}{2}1}&\;\cdots&\;\;\sum\limits_{i,j=1}^{2}a_{ij}^{\ast\frac{d(d+1)}{2}\frac{d(d+1)}{2}}
\end{pmatrix}.\label{WEN003}
\end{gather}
For the order of the rest terms, we use
\begin{align}
\varepsilon_{0}(d,m;\sigma):=&
\begin{cases}
\varepsilon^{\min\{\frac{1}{4},\frac{\sigma}{m},\frac{m-d-1}{m}\}},&m>d-1+\sigma,\,\sigma\geq2,\\
\varepsilon^{\min\{\frac{1}{4},\frac{m-d-1}{m}\}},&d+1<m\leq d-1+\sigma,\,\sigma>2,\\
|\ln\varepsilon|^{-1},&m=d+1,
\end{cases}\label{ZWZHAO112}\\
\varepsilon_{2}(d,m;\sigma):=&
\begin{cases}
\varepsilon^{\min\{\frac{1}{4},\frac{\sigma}{m}\}},&m>d+1+\sigma,\\
\max\{\varepsilon^{\frac{1}{4}},\varepsilon^{\frac{\sigma}{m}}|\ln\varepsilon|\},&m=d+1+\sigma,\\
\varepsilon^{\min\{\frac{1}{4},\frac{m-d-1}{m}\}},&d+1<m<d+1+\sigma,\\
|\ln\varepsilon|^{-1},&m=d+1.
\end{cases}\label{ZWZHAO113}
\end{align}

In what following we use, unless otherwise stated, $C$ to represent various positive constants whose values may vary from line to line and which depend only on $m,d,\tau,\kappa_{1},\kappa_{2},R$ and an upper bound of the $C^{2,\alpha}$ norms of $\partial D_{1}$, $\partial D_{2}$ and $\partial D$, but not on $\varepsilon$. Let $O(1)$ be some quantity satisfying  $|O(1)|\leq\,C$. Observe that by utilizing the standard elliptic theory (see Agmon et al. \cite{ADN1959,ADN1964}), we know
\begin{align*}
\|\nabla u\|_{L^{\infty}(\Omega\setminus\Omega_{R})}\leq\,C\|\varphi\|_{C^{2}(\partial D)}.
\end{align*}
Then it suffices to investigate the asymptotic behavior of $\nabla u$ in the narrow region $\Omega_{R}$.

To begin with, we state the asymptotic result in the case of $m\geq d+1$ as follows:
\begin{theorem}\label{Lthm066}
Let $D_{1},D_{2}\subset D\subset\mathbb{R}^{d}\,(d\geq2)$ be defined as above, conditions $\rm{(}${\bf{H1}}$\rm{)}$--$\rm{(}${\bf{H3}}$\rm{)}$ hold, and $\det\mathbb{F}^{\ast\alpha}\neq0,$ $\alpha=1,2,...,\frac{d(d+1)}{2}$. Let $u\in H^{1}(D;\mathbb{R}^{d})\cap C^{1}(\overline{\Omega};\mathbb{R}^{d})$ be the solution of (\ref{La.002}). Then for a sufficiently small $\varepsilon>0$ and $x\in\Omega_{R}$, if $m\geq d+1$,
\begin{align*}
\nabla u=&\sum\limits_{\alpha=1}^{d}\frac{\det\mathbb{F}^{\ast\alpha}}{\det \mathbb{D}^{\ast}}\frac{1+O(\varepsilon_{0}(d,m;\sigma))}{\mathcal{L}_{d}^{\alpha}\mathcal{M}_{0}\rho_{0}(d,m;\varepsilon)}\nabla\bar{u}^{\alpha}_{1}\notag\\
&+\sum\limits_{\alpha=d+1}^{\frac{d(d+1)}{2}}\frac{\det \mathbb{F}^{\ast\alpha}}{\det \mathbb{D}^{\ast}}\frac{1+O(\varepsilon_{2}(d,m;\sigma))}{\mathcal{L}_{d}^{\alpha}\mathcal{M}_{2}\rho_{2}(d,m;\varepsilon)}\nabla\bar{u}^{\alpha}_{1}+O(1)\|\varphi\|_{C^{0}(\partial D)},
\end{align*}
where the explicit auxiliary functions $\bar{u}_{1}^{\alpha}$, $\alpha=1,2,...,\frac{d(d+1)}{2}$ are defined in \eqref{zzwz002}, the constants $\mathcal{M}_{i}$, $i=0,2$, are defined in \eqref{WEN}, the Lam\'{e} constants $\mathcal{L}_{d}^{\alpha}$ are defined by \eqref{AZ}--\eqref{AZ110}, $\rho_{i}(d,m;\varepsilon)$, $i=0,2$ are defined in \eqref{rate00}, the blow-up factor matrices $\mathbb{D}^{\ast}$ and $\mathbb{F}^{\ast\alpha}$, $\alpha=1,2,...,\frac{d(d+1)}{2}$, are defined by \eqref{WEN002}--\eqref{WEN003}, the rest terms $\varepsilon_{i}(d,m;\sigma)$, $i=0,2,$ are defined in \eqref{ZWZHAO112}--\eqref{ZWZHAO113}.
\end{theorem}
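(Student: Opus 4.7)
The plan is to follow the linear-decomposition-plus-energy scheme refined for the Lam\'e system in \cite{BLL2015,L2018,LX2020,MZ2021}. First decompose the solution of \eqref{La.002} as
\[
u=\sum_{\alpha=1}^{d(d+1)/2}\bigl(C_1^\alpha v_1^\alpha+C_2^\alpha v_2^\alpha\bigr)+v_0,
\]
where $v_i^\alpha$ is the weak solution of $\mathcal{L}_{\lambda,\mu}v_i^\alpha=0$ in $\Omega$ with boundary data $\psi_\alpha$ on $\partial D_i$ and $0$ on the other inclusion and on $\partial D$, and $v_0$ carries the exterior data $\varphi$. The $d(d+1)$ rigid-motion integral constraints in \eqref{La.002}, tested against $v_j^\beta$, become the linear system
\[
\sum_{j,\beta}a_{ij}^{\alpha\beta}\,C_j^\beta=Q_i^\alpha,\qquad a_{ij}^{\alpha\beta}:=\int_\Omega\bigl(\mathbb{C}^0e(v_i^\alpha),e(v_j^\beta)\bigr)\,dx,
\]
in which $Q_i^\alpha$ is a controlled perturbation of $b_i^{\ast\alpha}$.

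The core of the proof is the asymptotic expansion of $a_{ij}^{\alpha\beta}$ as $\varepsilon\to 0$. I would prove
\[
a_{11}^{\alpha\beta}=\mathcal{L}_d^\alpha\,\mathcal{M}_{i(\alpha)}\,\rho_{i(\alpha)}(d,m;\varepsilon)\,\delta^{\alpha\beta}+a_{11}^{\ast\alpha\beta}+\text{(remainder)},
\]
with $i(\alpha)=0$ for $\alpha\le d$ and $i(\alpha)=2$ for $\alpha\ge d+1$, together with the parallel expansions of $a_{12},a_{21},a_{22}$ in which the divergent parts enter with signs that cancel in the sums, so that the row sums, column sums, and full sum converge to $\mathbb{B}^\ast,\mathbb{C}^\ast,\mathbb{D}^\ast$ respectively. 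The divergent coefficient is extracted by evaluating $\int_{\Omega_R}(\mathbb{C}^0e(\bar u_1^\alpha),e(\bar u_1^\alpha))\,dx$: only terms involving $\partial_{x_d}\bar v$ survive, and integrating $\int_{B_R'}\delta(x')^{-1}dx'$ or $\int_{B_R'}|x'|^2\delta(x')^{-1}dx'$ with $\delta(x')\sim\varepsilon+\tau|x'|^m$ yields the scalar factors $\mathcal{M}_0,\mathcal{M}_2$ in \eqref{WEN}, while the Lam\'e weights $\mathcal{L}_d^\alpha$ in \eqref{AZ}--\eqref{AZ110} emerge as the tensor contractions of $\mathbb{C}^0$ against the dominant components of $\psi_\alpha$. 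The correction $\mathcal{F}_\alpha$ in \eqref{QLA001}, introduced in \cite{LX2020}, is engineered so that $\|\nabla(v_i^\alpha-\bar u_i^\alpha)\|_{L^\infty(\Omega_R)}\lesssim 1$; via the iteration technique of \cite{BLL2015} this $L^\infty$ control upgrades to the quantitative remainder orders $\varepsilon_0,\varepsilon_2$ of \eqref{ZWZHAO112}--\eqref{ZWZHAO113}.

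With these asymptotics in hand, I change variables to $c_1^\alpha:=C_1^\alpha-C_2^\alpha$, $c_2^\alpha:=C_2^\alpha$: the blow-up attaches only to the $c_1$-block, whose coefficient matrix is diagonal at leading order with entries $\mathcal{L}_d^\alpha\mathcal{M}_{i(\alpha)}\rho_{i(\alpha)}$, while the $c_2$-block converges to $\mathbb{D}^\ast$. A block-and-cofactor expansion of the resulting determinant gives
\[
C_1^\alpha-C_2^\alpha=\frac{\det\mathbb{F}^{\ast\alpha}}{\det\mathbb{D}^\ast}\cdot\frac{1+O(\varepsilon_{i(\alpha)}(d,m;\sigma))}{\mathcal{L}_d^\alpha\,\mathcal{M}_{i(\alpha)}\,\rho_{i(\alpha)}(d,m;\varepsilon)},
\]
while $|C_2^\alpha|+|C_1^\alpha+C_2^\alpha|\lesssim\|\varphi\|_{C^0(\partial D)}$. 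Substituting back and using the thin-gap approximation $\nabla v_i^\alpha=\nabla\bar u_i^\alpha+O(1)$ together with the boundedness of $\nabla(\bar u_1^\alpha+\bar u_2^\alpha)$ produces the stated expansion of $\nabla u$ on $\Omega_R$; the term $O(1)\|\varphi\|_{C^0(\partial D)}$ absorbs $\nabla v_0$, bounded polynomial factors, and the approximation errors.

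The principal technical obstacle is the second step, that is, obtaining all these asymptotics simultaneously with the sharp remainder orders $\varepsilon_0$, $\varepsilon_2$. Three distinct sources of error enter: (a) the Taylor remainder $O(|x'|^{m+\sigma})$ from \textbf{(H1)} produces the $\varepsilon^{\sigma/m}$ corrections; (b) the iteration-based error in $v_i^\alpha\approx\bar u_i^\alpha$ produces the $\varepsilon^{1/4}$ and $\varepsilon^{(m-d-1)/m}$ corrections; (c) the cases $m=d-1$ and $m=d+1$ are logarithmically borderline and require separate treatment. All three must be propagated uniformly through the determinant expansion without spoiling the leading order. The parity hypothesis on $h_1-h_2$ is used critically here to kill the off-diagonal blow-up entries of $(a_{ij}^{\alpha\beta})$; without it the leading-order matrix would fail to be diagonal and the clean formula in the theorem would not survive.
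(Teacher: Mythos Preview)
Your proposal is correct and matches the paper's approach: the decomposition \eqref{Decom002}, the asymptotics of the matrix entries $a_{ij}^{\alpha\beta}$ and $b_i^\beta$ (the paper's Lemmas \ref{KM323}--\ref{lemmabc}), then Cramer's rule on the block system \eqref{PLA001} to extract $C_1^\alpha-C_2^\alpha$ (Theorem \ref{OMG123}), combined with the pointwise estimate of Corollary \ref{thm86} and the exponential decay of $\nabla(v_1^\alpha+v_2^\alpha)+\nabla v_0$ from Corollary \ref{coro00z}. One small correction: in your displayed formula for $a_{11}^{\alpha\beta}$ the additive term $a_{11}^{\ast\alpha\alpha}$ is actually infinite when $m\geq d+1$, so on the diagonal the paper writes the expansion multiplicatively as $\mathcal{L}_d^\alpha\mathcal{M}_{i(\alpha)}\rho_{i(\alpha)}\bigl(1+O(\hat\varepsilon_{i(\alpha)})\bigr)$ rather than additively.
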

\begin{remark}
From the results in Theorems \ref{Lthm066}, \ref{ZHthm002}, \ref{ZHthm003} and Corollary \ref{MGA001}, we give a complete description for the singularities of the gradient and meanwhile answer all the optimality of the blow-up rates for two adjacent $m$-convex inclusions in all dimensions. This improves and makes complete the gradient estimates and asymptotics in the previous work \cite{BLL2015,BLL2017,HL2018,L2018,LX2020,MZ2021}. By contrast with the gradient estimates, the main advantage of the precise asymptotic expansions lies in exhibiting all the higher and lower order singular terms in a unified expression.
%Moreover, the idea in this paper enables us to get rid of those harsh symmetric conditions on the domain and boundary data as imposed in previous work \cite{L2018,LX2020} and we thus give a complete characterization for the singularities of the gradient here.

\end{remark}
\begin{remark}
In view of decomposition \eqref{Decom002}, we see from the asymptotic expressions in Theorems \ref{Lthm066}--\ref{ZHthm003} that the singularity of $\nabla u$ consists of $\sum^{d}_{\alpha=1}(C_{1}^{\alpha}-C_{2}^{\alpha})\nabla v_{1}^{\alpha}$ and $\sum^{\frac{d(d+1)}{2}}_{\alpha=d+1}(C_{1}^{\alpha}-C_{2}^{\alpha})\nabla v_{1}^{\alpha}$. Observe that for $\alpha=1,2,...,\frac{d(d+1)}{2}$, the major singularity of $\nabla\bar{u}_{1}^{\alpha}$ is determined by $\partial_{x_{d}}\bar{u}_{1}^{\alpha}=\psi_{\alpha}\delta^{-1}+O(1)|\psi_{\alpha}|\delta^{-\frac{1}{m}}$. Then the first part $\sum^{d}_{\alpha=1}(C_{1}^{\alpha}-C_{2}^{\alpha})\nabla v_{1}^{\alpha}$ blows up at the rate of $(\varepsilon\rho_{0}(d,m;\varepsilon))^{-1}$ in $(d-1)$-dimensional sphere $\{|x'|\leq \sqrt[m]{\varepsilon}\}\cap\Omega$, while the second part $\sum^{\frac{d(d+1)}{2}}_{\alpha=d+1}(C_{1}^{\alpha}-C_{2}^{\alpha})\nabla v_{1}^{\alpha}$ blows up at the rate of $\big(\varepsilon^{\frac{m-1}{m}}\rho_{2}(d,m;\varepsilon)\big)^{-1}$ on the cylinder surface $\{|x'|=\sqrt[m]{\varepsilon}\}\cap\Omega$. Then we conclude that if $m>d$, then the maximal blow-up rate of $\nabla u$ lies in the second part; if $m=d$, then the maximal blow-up rate attains at these two parts simultaneously; if $m<d$, then the maximal blow-up rate is determined by the first part.
\end{remark}
\begin{remark}
The hypothetical condition $\det\mathbb{F}^{\ast\alpha}\neq0$, $\alpha=1,2,...,\frac{d(d+1)}{2}$ in Theorem \ref{Lthm066} implies that $\varphi\not\equiv0$ on $\partial D$. Otherwise, if $\varphi\equiv0$ on $\partial D$, then $b_i^{\alpha}=-\int_{\partial D}\frac{\partial v_{i}^{\alpha}}{\partial \nu_0}|_{+}\cdot\varphi=0$ such that $\det\mathbb{F}^{\ast\alpha}=0$, which is a contradiction. We also would like to point out that it is difficult to theoretically demonstrate $\det\mathbb{F}^{\ast\alpha}\neq0$ for any given boundary data mainly due to their dissymmetrical structure.

\end{remark}

Define
\begin{gather*}\mathbb{A}^{\ast}_{0}=\begin{pmatrix} a_{11}^{\ast d+1\,d+1}&\cdots&a_{11}^{\ast d+1\frac{d(d+1)}{2}} \\\\ \vdots&\ddots&\vdots\\\\a_{11}^{\ast\frac{d(d+1)}{2}d+1}&\cdots&a_{11}^{\ast\frac{d(d+1)}{2}\frac{d(d+1)}{2}}\end{pmatrix}  ,\;\,
\mathbb{B}^{\ast}_{0}=\begin{pmatrix} \sum\limits^{2}_{i=1}a_{i1}^{\ast d+1\,1}&\cdots&\sum\limits^{2}_{i=1}a_{i1}^{\ast d+1\,\frac{d(d+1)}{2}} \\\\ \vdots&\ddots&\vdots\\\\ \sum\limits^{2}_{i=1}a_{i1}^{\ast\frac{d(d+1)}{2}1}&\cdots&\sum\limits^{2}_{i=1}a_{i1}^{\ast\frac{d(d+1)}{2}\frac{d(d+1)}{2}}\end{pmatrix} ,\end{gather*}\begin{gather*}
\mathbb{C}^{\ast}_{0}=\begin{pmatrix} \sum\limits^{2}_{j=1}a_{1j}^{\ast1\,d+1}&\cdots&\sum\limits^{2}_{j=1}a_{1j}^{\ast1\frac{d(d+1)}{2}} \\\\ \vdots&\ddots&\vdots\\\\ \sum\limits^{2}_{j=1}a_{1j}^{\ast\frac{d(d+1)}{2}\,d+1}&\cdots&\sum\limits^{2}_{j=1}a_{1j}^{\ast\frac{d(d+1)}{2}\frac{d(d+1)}{2}}\end{pmatrix}.
%\mathbb{D}=\begin{pmatrix} \sum\limits^{2}_{i,j=1}a_{ij}^{11}&\cdots&\sum\limits^{2}_{i,j=1}a_{ij}^{1\frac{d(d+1)}{2}} \\\\ \vdots&\ddots&\vdots\\\\ \sum\limits^{2}_{i,j=1}a_{ij}^{\frac{d(d+1)}{2}1}&\cdots&\sum\limits^{2}_{i,j=1}a_{ij}^{\frac{d(d+1)}{2}\frac{d(d+1)}{2}}\end{pmatrix}.
\end{gather*}
Denote
\begin{align}\label{GC003}
\mathbb{F}^{\ast}_{0}=\begin{pmatrix} \mathbb{A}^{\ast}_{0}&\mathbb{B}^{\ast}_{0} \\  \mathbb{C}^{\ast}_{0}&\mathbb{D}^{\ast}
\end{pmatrix}.
\end{align}

On one hand, for $\alpha=1,2,...,d$, we denote
\begin{gather*}
\mathbb{A}^{\ast\alpha}_{1}=\begin{pmatrix}b_{1}^{\ast\alpha}&a_{11}^{\ast\alpha\,d+1}&\cdots&a_{11}^{\ast\alpha\frac{d(d+1)}{2}} \\ b_{1}^{\ast d+1}&a_{11}^{\ast d+1\,d+1}&\cdots&a_{11}^{\ast d+1\frac{d(d+1)}{2}}\\ \vdots&\vdots&\ddots&\vdots\\b_{1}^{\ast\frac{d(d+1)}{2}}&a_{11}^{\ast\frac{d(d+1)}{2}d+1}&\cdots&a_{11}^{\ast\frac{d(d+1)}{2}\frac{d(d+1)}{2}}
\end{pmatrix},
\end{gather*}
\begin{gather*}
\mathbb{B}^{\ast\alpha}_{1}=\begin{pmatrix}\sum\limits_{i=1}^{2}a_{i1}^{\ast\alpha1}&\sum\limits_{i=1}^{2}a_{i1}^{\ast\alpha2}&\cdots&\sum\limits_{i=1}^{2}a_{i1}^{\ast\alpha\,\frac{d(d+1)}{2}} \\ \sum\limits_{i=1}^{2}a_{i1}^{\ast d+1\,1}&\sum\limits_{i=1}^{2}a_{i1}^{\ast d+1\,2}&\cdots&\sum\limits_{i=1}^{2}a_{i1}^{\ast d+1\,\frac{d(d+1)}{2}} \\ \vdots&\vdots&\ddots&\vdots\\\sum\limits_{i=1}^{2}a_{i1}^{\ast\frac{d(d+1)}{2}\,1}&\sum\limits_{i=1}^{2}a_{i1}^{\ast\frac{d(d+1)}{2}\,2}&\cdots&\sum\limits_{i=1}^{2}a_{i1}^{\ast\frac{d(d+1)}{2}\frac{d(d+1)}{2}}
\end{pmatrix},
\end{gather*}
\begin{gather*}
\mathbb{C}^{\ast\alpha}_{1}=\begin{pmatrix}\sum\limits_{i=1}^{2}b_{i}^{\ast1}&\sum\limits_{j=1}^{2}a_{1j}^{\ast1\,d+1}&\cdots&\sum\limits_{j=1}^{2}a_{1j}^{\ast1\,\frac{d(d+1)}{2}} \\\sum\limits_{i=1}^{2}b_{i}^{\ast2}&\sum\limits_{j=1}^{2}a_{1j}^{\ast2\,d+1}&\cdots&\sum\limits_{j=1}^{2}a_{1j}^{\ast2\,\frac{d(d+1)}{2}}\\ \vdots&\vdots&\ddots&\vdots\\\sum\limits_{i=1}^{2}b_{i}^{\ast\frac{d(d+1)}{2}}&\sum\limits_{j=1}^{2}a_{1j}^{\ast\frac{d(d+1)}{2}\,d+1}&\cdots&\sum\limits_{j=1}^{2}a_{1j}^{\ast\frac{d(d+1)}{2}\frac{d(d+1)}{2}}
\end{pmatrix}.
\end{gather*}
Write
\begin{align}\label{GC005}
\mathbb{F}^{\ast\alpha}_{1}=\begin{pmatrix} \mathbb{A}^{\ast\alpha}_{1}&\mathbb{B}^{\ast\alpha}_{1} \\  \mathbb{C}^{\ast\alpha}_{1}&\mathbb{D}^{\ast}
\end{pmatrix},\quad\alpha=1,2,...,d.
\end{align}

On the other hand, for $\alpha=d+1,...,\frac{d(d+1)}{2}$, we replace the elements of $\alpha$-th column in the matrices $\mathbb{A}^{\ast}_{0}$ and  $\mathbb{C}^{\ast}_{0}$ by column vector $\Big(b_{1}^{\ast d+1},...,b_{1}^{\ast\frac{d(d+1)}{2}}\Big)^{T}$ and $\Big(\sum\limits_{i=1}^{2}b_{i}^{\ast1},...,\sum\limits_{i=1}^{2}b_{i}^{\ast\frac{d(d+1)}{2}}\Big)^{T}$, respectively. Then we denote these two new matrices by $\mathbb{A}_{2}^{\ast\alpha}$ and $\mathbb{C}_{2}^{\ast\alpha}$ as follows:
\begin{gather*}
\mathbb{A}_{2}^{\ast\alpha}=
\begin{pmatrix}
a_{11}^{\ast d+1\,d+1}&\cdots&b_{1}^{d+1}&\cdots&a_{11}^{\ast d+1\,\frac{d(d+1)}{2}} \\\\ \vdots&\ddots&\vdots&\ddots&\vdots\\\\a_{11}^{\ast\frac{d(d+1)}{2}\,d+1}&\cdots&b_{1}^{\ast\frac{d(d+1)}{2}}&\cdots&a_{11}^{\ast\frac{d(d+1)}{2}\,\frac{d(d+1)}{2}}
\end{pmatrix},
\end{gather*}
and
\begin{gather*}
\mathbb{C}_{2}^{\ast\alpha}=
\begin{pmatrix}
\sum\limits_{j=1}^{2} a_{1j}^{\ast1\,d+1}&\cdots&\sum\limits_{i=1}^{2}b_{i}^{\ast1}&\cdots&\sum\limits_{j=1}^{2} a_{1j}^{\ast1\,\frac{d(d+1)}{2}} \\\\ \vdots&\ddots&\vdots&\ddots&\vdots\\\\\sum\limits_{j=1}^{2} a_{1j}^{\ast\frac{d(d+1)}{2}\,d+1}&\cdots&\sum\limits_{i=1}^{2}b_{i}^{\ast\frac{d(d+1)}{2}}&\cdots&\sum\limits_{j=1}^{2} a_{1j}^{\ast\frac{d(d+1)}{2}\,\frac{d(d+1)}{2}}
\end{pmatrix}.
\end{gather*}
Define
\begin{align}\label{GC006}
\mathbb{F}^{\ast\alpha}_{2}=\begin{pmatrix} \mathbb{A}^{\ast\alpha}_{2}&\mathbb{B}^{\ast\alpha}_{0} \\  \mathbb{C}^{\ast\alpha}_{2}&\mathbb{D}^{\ast}
\end{pmatrix},\quad\alpha=d+1,...,\frac{d(d+1)}{2}.
\end{align}
For the remaining terms, define
\begin{align}
\bar{\varepsilon}_{0}(d,m;\sigma):=&
\begin{cases}
\varepsilon^{\min\{\frac{\sigma}{m},\frac{d+1-m}{12m}\}},&d-1+\sigma<m<d+1,\,\sigma<2,\\
\max\{\varepsilon^{\frac{\sigma}{m}}|\ln\varepsilon|,\varepsilon^{\frac{d+1-m}{12m}}\},&m=d-1+\sigma,\,\sigma<2,\\
\varepsilon^{\min\{\frac{m-d+1}{m},\frac{d+1-m}{12m}\}},&d-1<m<d-1+\sigma,\,\sigma\leq2,\\
|\ln\varepsilon|^{-1},&m=d-1,
\end{cases}\label{GC001}\\
\bar{\varepsilon}_{2}(d,m;\sigma):=&
\begin{cases}
\varepsilon^{\min\{\frac{m-d+1}{m},\frac{d+1-m}{12m}\}},&d-1<m<d+1,\\
|\ln\varepsilon|^{-1},&m=d-1.
\end{cases}\label{GC002}
\end{align}

Second, in the case of $d-1\leq m<d+1$, we obtain the following asymptotic expansion.
\begin{theorem}\label{ZHthm002}
Let $D_{1},D_{2}\subset D\subset\mathbb{R}^{d}\,(d\geq2)$ be defined as above, conditions $\rm{(}${\bf{H1}}$\rm{)}$--$\rm{(}${\bf{H3}}$\rm{)}$ hold, $\det\mathbb{F}^{\ast\alpha}_{1}\neq0,$ $\alpha=1,2,...,d$, and $\det\mathbb{F}^{\ast\alpha}_{2}\neq0,$ $\alpha=d+1,...,\frac{d(d+1)}{2}$. Let $u\in H^{1}(D;\mathbb{R}^{d})\cap C^{1}(\overline{\Omega};\mathbb{R}^{d})$ be the solution of (\ref{La.002}). Then for a sufficiently small $\varepsilon>0$ and $x\in\Omega_{R}$, if $d-1\leq m<d+1$,
\begin{align*}
\nabla u=&\sum\limits_{\alpha=1}^{d}\frac{\det\mathbb{F}_{1}^{\ast\alpha}}{\det \mathbb{F}_{0}^{\ast}}\frac{1+O(\bar{\varepsilon}_{0}(d,m;\sigma))}{\mathcal{L}_{d}^{\alpha}\mathcal{M}_{0}\rho_{0}(d,m;\varepsilon)}\nabla\bar{u}_{1}^{\alpha}\notag\\
&+\sum\limits_{\alpha=d+1}^{\frac{d(d+1)}{2}}\frac{\det\mathbb{F}_{2}^{\ast\alpha}}{\det \mathbb{F}_{0}^{\ast}}(1+O(\bar{\varepsilon}_{2}(d,m;\sigma)))\nabla\bar{u}^{\alpha}_{1}+O(1)\|\varphi\|_{C^{0}(\partial D)},
\end{align*}
where the explicit auxiliary functions $\bar{u}_{1}^{\alpha}$, $\alpha=1,2,...,\frac{d(d+1)}{2}$ are defined in \eqref{zzwz002}, the constant $\mathcal{M}_{0}$ is defined in \eqref{WEN}, the Lam\'{e} constants $\mathcal{L}_{d}^{\alpha}$ are defined by \eqref{AZ}--\eqref{AZ110}, $\rho_{i}(d,m;\varepsilon)$, $i=0,2$ are defined in \eqref{rate00}, the blow-up factor matrices $\mathbb{F}_{0}^{\ast}$, $\mathbb{F}^{\ast\alpha}_{1},$ $\alpha=1,2,...,d$, $\mathbb{F}^{\ast\alpha}_{2},$ $\alpha=d+1,...,\frac{d(d+1)}{2}$ are defined by \eqref{GC003}--\eqref{GC006}, the rest terms $\bar{\varepsilon}_{i}(d,m;\sigma)$, $i=0,2,$ are defined in \eqref{GC001}--\eqref{GC002}.

\end{theorem}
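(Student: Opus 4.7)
The plan is to extend the linear-decomposition scheme that underlies Theorem \ref{Lthm066} into the intermediate regime $d-1\leq m<d+1$, where the critical difference is that the rotational channel $\alpha\in\{d+1,\dots,\frac{d(d+1)}{2}\}$ no longer produces a divergent leading coefficient (since $\rho_{2}(d,m;\varepsilon)=1$), and hence the corresponding block of the limit matrix must be kept at its $a_{11}^{\ast\alpha\beta}$-value rather than absorbed into a blow-up factor. First I would write, following Section \ref{SEC003}, $u=v_{0}+\sum_{\alpha=1}^{d(d+1)/2}(C_{1}^{\alpha}v_{1}^{\alpha}+C_{2}^{\alpha}v_{2}^{\alpha})$, with $v_{i}^{\alpha}$ the $\varepsilon$-dependent analogues of the reference functions $v_{i}^{\ast\alpha}$ in \eqref{qaz001111}. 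The integral conditions in \eqref{La.002} translate into a linear system for $\{C_{i}^{\alpha}\}$ whose matrix entries are $a_{ij}^{\alpha\beta}=\int_{\Omega}(\mathbb{C}^{0}e(v_{i}^{\alpha}),e(v_{j}^{\beta}))\,dx$ and whose right-hand sides are $b_{i}^{\alpha}=-\int_{\partial D}\partial_{\nu_{0}}v_{i}^{\alpha}|_{+}\cdot\varphi$.

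Second, I would quantify the two-scale behavior of these entries. Combining the explicit form of $\bar{u}_{1}^{\alpha}$ in \eqref{zzwz002}--\eqref{QLA001} with the energy/gradient estimates available from \cite{MZ2021} and the convergence $v_{i}^{\alpha}\to v_{i}^{\ast\alpha}$, one obtains
\begin{align*}
a_{11}^{\alpha\beta}=\mathcal{L}_{d}^{\alpha}\mathcal{M}_{0}\rho_{0}(d,m;\varepsilon)\delta^{\alpha\beta}+O(1),\qquad 1\leq\alpha,\beta\leq d,
\end{align*}
while, for $\alpha$ or $\beta$ in $\{d+1,\dots,\frac{d(d+1)}{2}\}$, the singular integrand in the gap is integrable up to the touching point as soon as $m<d+1$, so the corresponding entry converges to $a_{11}^{\ast\alpha\beta}$ modulo a remainder controlled by the $O(|x'|^{m+\sigma})$ correction in hypothesis $(\mathbf{H1})$. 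The same analysis applied to the combinations $\sum_{i}a_{i1}^{\alpha\beta}$, $\sum_{j}a_{1j}^{\alpha\beta}$ and $\sum_{i,j}a_{ij}^{\alpha\beta}$ identifies, in the limit, precisely the blocks $\mathbb{A}_{0}^{\ast},\mathbb{B}_{0}^{\ast},\mathbb{C}_{0}^{\ast},\mathbb{D}^{\ast}$ appearing in $\mathbb{F}_{0}^{\ast}$, and the numerator matrices $\mathbb{F}_{1}^{\ast\alpha}$ and $\mathbb{F}_{2}^{\ast\alpha}$ arise when the $b_{i}^{\alpha}$ column is inserted in the appropriate position; the remainder rates $\bar{\varepsilon}_{0}$ and $\bar{\varepsilon}_{2}$ of \eqref{GC001}--\eqref{GC002} will be read off from these estimates.

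Third, I would solve the system by Cramer's rule after rescaling the first $d$ unknowns by $(\mathcal{L}_{d}^{\alpha}\mathcal{M}_{0}\rho_{0})^{-1}$. The rescaled coefficient determinant then converges to $\det\mathbb{F}_{0}^{\ast}$ (nonsingular by coercivity of the limit bilinear form on the rigid-motion complement), giving
\begin{align*}
C_{1}^{\alpha}-C_{2}^{\alpha}=\frac{\det\mathbb{F}_{1}^{\ast\alpha}}{\det\mathbb{F}_{0}^{\ast}}\cdot\frac{1+O(\bar{\varepsilon}_{0})}{\mathcal{L}_{d}^{\alpha}\mathcal{M}_{0}\rho_{0}}\;(\alpha\leq d),\quad C_{1}^{\alpha}-C_{2}^{\alpha}=\frac{\det\mathbb{F}_{2}^{\ast\alpha}}{\det\mathbb{F}_{0}^{\ast}}(1+O(\bar{\varepsilon}_{2}))\;(\alpha\geq d+1).
\end{align*}
Substituting into $\nabla u=\sum_{\alpha}(C_{1}^{\alpha}-C_{2}^{\alpha})\nabla v_{1}^{\alpha}+O(1)\|\varphi\|_{C^{0}(\partial D)}$ and replacing $\nabla v_{1}^{\alpha}$ by $\nabla\bar{u}_{1}^{\alpha}$ to the appropriate order (with the residual absorbed into the $\bar\varepsilon_{i}$ error) yields the claimed formula.

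The main obstacle will be obtaining the sharp remainder rates $\bar{\varepsilon}_{0}$ and $\bar{\varepsilon}_{2}$ uniformly across the critical range, in particular at the logarithmic thresholds $m=d-1$ and $m=d+1$. There the diagonal blocks are only \emph{marginally} divergent or convergent, so one must extract logarithmic prefactors from $\rho_{0}$ and from the $a_{11}^{\alpha\beta}$ expansions with some care, and verify that after the block rescaling the coefficient matrix remains uniformly invertible, so that the Cramer quotient genuinely inherits $\bar{\varepsilon}_{0}$ and $\bar{\varepsilon}_{2}$ rather than a strictly weaker error. A secondary technical point is the independent treatment of the mixed sums $\sum_{i}a_{i1}^{\alpha\beta}$ versus $\sum_{j}a_{1j}^{\alpha\beta}$, since the induced matrices $\mathbb{B}_{0}^{\ast}$ and $\mathbb{C}_{0}^{\ast}$ need not coincide and their asymmetry must be tracked through the Cramer expansion.
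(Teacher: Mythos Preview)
Your outline is essentially the paper's own argument: the decomposition \eqref{Decom002}, the asymptotic expansions of $a_{ij}^{\alpha\beta}$ and $b_i^{\alpha}$ (the paper's Lemmas \ref{KM323} and \ref{lemmabc}), Cramer's rule applied to the block-rescaled system \eqref{PLA001} to extract $C_1^\alpha-C_2^\alpha$ (Theorem \ref{OMG123}(ii)), and substitution of Corollaries \ref{thm86}--\ref{coro00z}. One small imprecision worth flagging: your claim $a_{11}^{\alpha\beta}=\mathcal{L}_d^{\alpha}\mathcal{M}_0\rho_0\,\delta^{\alpha\beta}+O(1)$ for $1\le\alpha,\beta\le d$ is not quite right in dimension $d=2$, where the off-diagonal entry is $O(|\ln\varepsilon|)$ rather than $O(1)$ (see \eqref{LVZQ001}); the paper absorbs this via an auxiliary factor $\rho_d(\varepsilon)$ in the Cramer expansion, which is harmless because $|\ln\varepsilon|/\rho_0(d,m;\varepsilon)\to 0$, but must be tracked to arrive at the stated remainder $\bar\varepsilon_0$.
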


For $\alpha=1,2,...,\frac{d(d+1)}{2}$, we replace the elements of $\alpha$-th column in the matrices $\mathbb{A}^{\ast}$ and $\mathbb{C}^{\ast}$ by column vectors $\Big(b_{1}^{\ast1},...,b_{1}^{\frac{\ast d(d+1)}{2}}\Big)^{T}$ and $\Big(\sum\limits_{i=1}^{2}b_{i}^{\ast1},...,\sum\limits_{i=1}^{2}b_{i}^{\ast\frac{d(d+1)}{2}}\Big)^{T}$, respectively, and then denote these two new matrices by $\mathbb{A}_{3}^{\ast\alpha}$ and $\mathbb{C}_{3}^{\ast\alpha}$ as follows:
\begin{gather*}
\mathbb{A}_{3}^{\ast\alpha}=
\begin{pmatrix}
a_{11}^{\ast11}&\cdots&b_{1}^{\ast1}&\cdots&a_{11}^{\ast1\,\frac{d(d+1)}{2}} \\\\ \vdots&\ddots&\vdots&\ddots&\vdots\\\\a_{11}^{\ast\frac{d(d+1)}{2}\,1}&\cdots&b_{1}^{\ast\frac{d(d+1)}{2}}&\cdots&a_{11}^{\ast\frac{d(d+1)}{2}\,\frac{d(d+1)}{2}}
\end{pmatrix},
\end{gather*}
and
\begin{gather*}
\mathbb{C}_{3}^{\ast\alpha}=
\begin{pmatrix}
\sum\limits_{j=1}^{2} a_{1j}^{\ast11}&\cdots&\sum\limits_{i=1}^{2}b_{i}^{\ast1}&\cdots&\sum\limits_{j=1}^{2} a_{1j}^{\ast1\,\frac{d(d+1)}{2}} \\\\ \vdots&\ddots&\vdots&\ddots&\vdots\\\\\sum\limits_{j=1}^{2} a_{1j}^{\ast\frac{d(d+1)}{2}\,1}&\cdots&\sum\limits_{i=1}^{2}b_{i}^{\ast\frac{d(d+1)}{2}}&\cdots&\sum\limits_{j=1}^{2} a_{1j}^{\ast\frac{d(d+1)}{2}\,\frac{d(d+1)}{2}}
\end{pmatrix}.
\end{gather*}
Define
\begin{align}\label{GC009}
\mathbb{F}^{\ast\alpha}_{3}=\begin{pmatrix} \mathbb{A}^{\ast\alpha}_{3}&\mathbb{B}^{\ast} \\  \mathbb{C}^{\ast\alpha}_{3}&\mathbb{D}^{\ast}
\end{pmatrix},\;\,\alpha=1,2,...,\frac{d(d+1)}{2},\quad \mathbb{F}=\begin{pmatrix} \mathbb{A}^{\ast}&\mathbb{B}^{\ast} \\  \mathbb{C}^{\ast}&\mathbb{D}^{\ast}
\end{pmatrix}.
\end{align}

Then, the asymptotic result corresponding to the case of $m<d-1$ is listed as follows:
\begin{theorem}\label{ZHthm003}
Let $D_{1},D_{2}\subset D\subset\mathbb{R}^{d}\,(d\geq2)$ be defined as above, conditions $\rm{(}${\bf{H1}}$\rm{)}$--$\rm{(}${\bf{H3}}$\rm{)}$ hold, and $\det\mathbb{F}_{3}^{\ast\alpha}\neq0,$ $\alpha=1,2,...,\frac{d(d+1)}{2}$. Let $u\in H^{1}(D;\mathbb{R}^{d})\cap C^{1}(\overline{\Omega};\mathbb{R}^{d})$ be the solution of (\ref{La.002}). Then for a sufficiently small $\varepsilon>0$ and $x\in\Omega_{R}$, if $m<d-1$,
\begin{align*}
\nabla u=&\sum\limits_{\alpha=1}^{\frac{d(d+1)}{2}}\frac{\det\mathbb{F}_{3}^{\ast\alpha}}{\det \mathbb{F}^{\ast}}(1+O(\varepsilon^{\min\{\frac{1}{6},\frac{d-1-m}{12m}\}}))\nabla\bar{u}^{\alpha}_{1}+O(1)\|\varphi\|_{C^{0}(\partial D)},
\end{align*}
where the explicit auxiliary functions $\bar{u}_{1}^{\alpha}$, $\alpha=1,2,...,\frac{d(d+1)}{2}$ are defined in \eqref{zzwz002}, the blow-up factor matrices $\mathbb{F}^{\ast}$ and $\mathbb{F}_{3}^{\ast\alpha},$ $\alpha=1,2,...,\frac{d(d+1)}{2}$, are defined by \eqref{GC009}.

\end{theorem}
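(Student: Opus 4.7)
The plan is to reduce the result to solving a $d(d+1)\times d(d+1)$ linear system for the constants in the linear decomposition of $u$, exploiting the key feature of the subcritical regime $m<d-1$: every entry of this system stays bounded as $\varepsilon\to 0$, so Cramer's rule can be applied directly to the full limiting matrix $\mathbb{F}^{\ast}$.

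First, I would invoke the decomposition from Section~\ref{SEC003} to write
\[
u = \sum_{\alpha=1}^{\frac{d(d+1)}{2}}\bigl(C_{1}^{\alpha}v_{1}^{\alpha}+C_{2}^{\alpha}v_{2}^{\alpha}\bigr) + v_{0},
\]
where $v_{i}^{\alpha}$ is the Lam\'{e} analog of \eqref{qaz001111} posed on the $\varepsilon$-dependent domain $\Omega$ and $v_{0}$ carries the data $\varphi$. The $d(d+1)$ unknowns $C_{i}^{\alpha}$ are uniquely determined by the zero-traction conditions in \eqref{La.002}. Testing against $v_{i}^{\alpha}$ and integrating by parts converts these into the linear system
\[
\sum_{j=1}^{2}\sum_{\beta=1}^{\frac{d(d+1)}{2}} a_{ij}^{\alpha\beta}\,C_{j}^{\beta} = b_{i}^{\alpha},\qquad i=1,2,\ \alpha=1,\dots,\tfrac{d(d+1)}{2},
\]
which, after elementary row/column manipulations, is represented in block form by the $\varepsilon$-dependent analog of $\mathbb{F}^{\ast}$ in \eqref{GC009}.

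Second, in the regime $m<d-1$ Lemma~\ref{lemmabc} guarantees that every $a_{ij}^{\alpha\beta}$ and $b_{i}^{\alpha}$ is uniformly bounded in $\varepsilon$. I would then compare $v_{i}^{\alpha}$ with its limit $v_{i}^{\ast\alpha}$ from \eqref{qaz001111} and prove the quantitative convergence
\[
\bigl|a_{ij}^{\alpha\beta}-a_{ij}^{\ast\alpha\beta}\bigr|+\bigl|b_{i}^{\alpha}-b_{i}^{\ast\alpha}\bigr| \le C\varepsilon^{\min\{1/6,(d-1-m)/(12m)\}},
\]
by splitting the energy into the thin-gap portion $\Omega_{R}$ (a dyadic $W^{1,2}$ iteration on shrinking cylinders $\Omega_{t}(z')$ of the type used in \cite{BLL2015,MZ2021}) and the far-field portion $\Omega\setminus\Omega_{R}$ (standard elliptic stability as $\Omega\to\Omega^{\ast}$). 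The hypothesis $\det\mathbb{F}_{3}^{\ast\alpha}\neq 0$ forces $\det\mathbb{F}^{\ast}\neq 0$, so Cramer's rule applies for all sufficiently small $\varepsilon$ and, after regrouping $\nabla v_{2}^{\alpha}$ against $\nabla v_{1}^{\alpha}$ (since $\nabla(v_{1}^{\alpha}+v_{2}^{\alpha})$ is bounded), identifies the effective coefficient of $\nabla v_{1}^{\alpha}$ as $\det\mathbb{F}_{3}^{\ast\alpha}/\det\mathbb{F}^{\ast}$ up to the same error.

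Third, I would swap $\nabla v_{1}^{\alpha}$ for the Keller surrogate $\nabla\bar{u}_{1}^{\alpha}$ of \eqref{zzwz002} using the pointwise estimate
\[
|\nabla(v_{1}^{\alpha}-\bar{u}_{1}^{\alpha})| \le C\varepsilon^{\min\{1/6,(d-1-m)/(12m)\}},
\]
which together with the previous step assembles the stated expansion plus an $O(1)\|\varphi\|_{C^{0}(\partial D)}$ remainder. The principal technical obstacle is precisely this last pointwise bound: the difference $w^{\alpha}:=v_{1}^{\alpha}-\bar{u}_{1}^{\alpha}$ solves a Lam\'{e} system whose source $\mathcal{L}_{\lambda,\mu}\bar{u}_{1}^{\alpha}$ is concentrated in $\Omega_{2R}$ together with an $O(\delta)$ boundary mismatch on $\partial D_{1}\cup\partial D_{2}$ (with $\delta$ as in \eqref{deta}); extracting the exponent $\min\{1/6,(d-1-m)/(12m)\}$ requires a two-step dyadic rescaling on the cylinders $\Omega_{t}(z')$ balanced against the $C^{1,\alpha}$ Agmon--Douglis--Nirenberg estimates, the factor $1/12$ reflecting the successive energy rescalings needed to convert decay in $L^{2}$ into a uniform gradient bound.
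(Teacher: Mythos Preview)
Your overall architecture---decompose $u$, set up the $d(d+1)\times d(d+1)$ linear system for the free constants, compare each entry with its $\varepsilon=0$ limit, and apply Cramer's rule---is exactly the paper's route. However, two of your detailed claims are incorrect and would derail the proof as written.

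\medskip

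\textbf{First gap.} The assertion that ``$\det\mathbb{F}_{3}^{\ast\alpha}\neq 0$ forces $\det\mathbb{F}^{\ast}\neq 0$'' is false: $\mathbb{F}_{3}^{\ast\alpha}$ is obtained from $\mathbb{F}^{\ast}$ by replacing one column, and a singular matrix can easily become nonsingular after such a replacement. In the paper $\det\mathbb{F}^{\ast}\neq 0$ is proved \emph{independently} of the hypothesis on $\mathbb{F}_{3}^{\ast\alpha}$, by the same ellipticity/Korn argument used for $\mathbb{D}^{\ast}$ in \eqref{AJZ001}: one writes $\xi^{T}\mathbb{F}^{\ast}\xi$ as an energy quadratic form in $\Omega^{\ast}$ and shows it is positive definite because a rigid displacement vanishing on $\partial D$ must vanish identically. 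You need this argument; the hypothesis $\det\mathbb{F}_{3}^{\ast\alpha}\neq 0$ only ensures the numerator in Cramer's formula is nonzero.

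\medskip

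\textbf{Second gap.} Your claimed pointwise estimate
\[
|\nabla(v_{1}^{\alpha}-\bar{u}_{1}^{\alpha})|\le C\varepsilon^{\min\{1/6,(d-1-m)/(12m)\}}
\]
is not correct, and your description of the underlying boundary-value problem is off: $\bar{u}_{1}^{\alpha}$ is constructed so that $f(\bar{v})$ vanishes on both $\Gamma^{\pm}_{R}$, hence $w^{\alpha}:=v_{1}^{\alpha}-\bar{u}_{1}^{\alpha}$ has \emph{zero} trace on $\partial D_{1}\cup\partial D_{2}$ (no $O(\delta)$ mismatch). The sharp bound delivered by the iteration (Proposition~\ref{thm8698} and Corollary~\ref{thm86}) is only $|\nabla w^{\alpha}|\le C\delta^{(m-2)/m}$ for $\alpha\le d$ and $|\nabla w^{\alpha}|\le C$ for $\alpha>d$; these are $O(1)$ in $\varepsilon$, not small. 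In the paper's assembly these $O(1)$ errors are simply absorbed into the final $O(1)\|\varphi\|_{C^{0}(\partial D)}$ remainder. The small parameter $\varepsilon^{\min\{1/6,(d-1-m)/(12m)\}}$ arises \emph{entirely} from the second step---the convergence $a_{ij}^{\alpha\beta}\to a_{ij}^{\ast\alpha\beta}$ of Lemma~\ref{lemmabc} (specifically \eqref{LMC1}--\eqref{LMC} in the case $m<d-1$)---and then propagates through Cramer's rule to $C_{1}^{\alpha}-C_{2}^{\alpha}$. So what you identify as ``the principal technical obstacle'' is misplaced: the hard analysis is in the energy comparison for the coefficients, not in the pointwise replacement of $\nabla v_{1}^{\alpha}$ by $\nabla\bar{u}_{1}^{\alpha}$.
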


For the inclusions of arbitrary $m$-convex shape satisfying
\begin{align}\label{KANZ001}
\tau_{1}|x'|^{m}\leq (h_{1}-h_{2})(x')\leq\tau_{2}|x'|^{m},\quad\mathrm{for}\;|x'|\leq2R,\;\tau_{i}>0,\;i=1,2,
\end{align}
by applying the proofs of Theorems \ref{Lthm066}, \ref{ZHthm002} and \ref{ZHthm003} with a minor modification, we obtain the optimal pointwise upper and lower bounds on the gradients as follows:

\begin{corollary}\label{MGA001}
Let $D_{1},D_{2}\subset D\subset\mathbb{R}^{d}\,(d\geq2)$ be defined as above, conditions \eqref{KANZ001} and $\rm{(}${\bf{H2}}$\rm{)}$--$\rm{(}${\bf{H3}}$\rm{)}$ hold. Let $u\in H^{1}(D;\mathbb{R}^{d})\cap C^{1}(\overline{\Omega};\mathbb{R}^{d})$ be the solution of (\ref{La.002}). Then for a sufficiently small $\varepsilon>0$,

$(i)$ if $m>d$, there exists some integer $d+1\leq \alpha_{0}\leq\frac{d(d+1)}{2}$ such that $\det\mathbb{F}^{\ast\alpha_{0}}\neq0$, then for some $1\leq i_{\alpha_{0}}\leq d-1$ and $x\in\{x'|\,x_{i_{\alpha_{0}}}=\sqrt[m]{\varepsilon},\,x_{i}=0,i\neq i_{\alpha_{0}},1\leq i\leq d-1\}\cap\Omega$,
\begin{align*}
|\nabla u|\leq
\begin{cases}
\frac{\max\limits_{d+1\leq\alpha\leq\frac{d(d+1)}{2}}\tau_{2}^{\frac{d+1}{m}}|\mathcal{L}_{d}^{\alpha}|^{-1}|\det\mathbb{F}^{\ast\alpha}|}{(1+\tau_{1})|\det\mathbb{D}^{\ast}|}\frac{C\varepsilon^{\frac{1-m}{m}}}{\rho_{2}(d,m;\varepsilon)},&m\geq d+1,\\\\
\frac{\max\limits_{d+1\leq\alpha\leq\frac{d(d+1)}{2}}|\det\mathbb{F}_{2}^{\ast\alpha}|}{(1+\tau_{1})|\det\mathbb{F}_{0}^{\ast}|}\frac{C}{\varepsilon^{\frac{m-1}{m}}},&d<m<d+1,
\end{cases}
\end{align*}
and
\begin{align*}
|\nabla u|\geq
\begin{cases}
\frac{\tau_{1}^{\frac{d+1}{m}}|\det\mathbb{F}^{\ast\alpha_{0}}|}{(1+\tau_{2})|\mathcal{L}_{d}^{\alpha_{0}}||\det\mathbb{D}^{\ast}|}\frac{\varepsilon^{\frac{1-m}{m}}}{C\rho_{2}(d,m;\varepsilon)},&m\geq d+1,\\\\
\frac{|\det\mathbb{F}^{\ast\alpha_{0}}_{2}|}{(1+\tau_{2})|\det\mathbb{F}^{\ast}_{0}|}\frac{1}{C\varepsilon^{\frac{m-1}{m}}},&d<m<d+1;
\end{cases}
\end{align*}

$(ii)$ if $m\leq d$, there exists some integer $1\leq \alpha_{0}\leq d$ such that $\det\mathbb{F}_{1}^{\ast\alpha_{0}}\neq0$, then for $x\in\{|x'|=0\}\cap\Omega$,
\begin{align*}
|\nabla u|\leq
\begin{cases}
\frac{\max\limits_{1\leq\alpha\leq d}\tau_{2}^{\frac{d-1}{m}}|\mathcal{L}_{d}^{\alpha}|^{-1}|\det\mathbb{F}_{1}^{\ast\alpha}|}{|\det\mathbb{F}_{0}^{\ast}|}\frac{C}{\varepsilon\rho_{0}(d,m;\varepsilon)},&d-1\leq m\leq d,\\\\
\frac{\max\limits_{1\leq\alpha\leq d}|\det\mathbb{F}_{3}^{\ast\alpha}|}{|\det \mathbb{F}^{\ast}|}\frac{C}{\varepsilon},&m<d-1,
\end{cases}
\end{align*}
and
\begin{align*}
|\nabla u|\geq
\begin{cases}
\frac{\tau_{1}^{\frac{d-1}{m}}|\det\mathbb{F}_{1}^{\ast\alpha_{0}}|}{|\mathcal{L}_{d}^{\alpha_{0}}||\det\mathbb{F}_{0}^{\ast}|}\frac{1}{C\varepsilon\rho_{0}(d,m;\varepsilon)},&d-1\leq m\leq d,\\\\
\frac{|\det\mathbb{F}_{3}^{\ast\alpha_{0}}|}{|\det \mathbb{F}^{\ast}|}\frac{1}{C\varepsilon},&m<d-1,
\end{cases}
\end{align*}
where the blow-up factor matrices $\mathbb{D}^{\ast}$, $\mathbb{F}^{\ast}$, $\mathbb{F}_{0}^{\ast}$, $\mathbb{F}_{1}^{\ast\alpha}$, $\alpha=1,2,...,d$, and $\mathbb{F}^{\ast\alpha}$, $\mathbb{F}_{2}^{\ast3}$, $\alpha=d+1,...,\frac{d(d+1)}{2}$, are defined by \eqref{WEN002}--\eqref{WEN003}, \eqref{GC003}--\eqref{GC006} and \eqref{GC009}, respectively.

\end{corollary}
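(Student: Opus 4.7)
The strategy is to rerun the proofs of Theorems \ref{Lthm066}, \ref{ZHthm002} and \ref{ZHthm003}, but replace every step that extracted a precise asymptotic identity from condition $({\bf H1})$ by a two-sided bound obtained from the envelope \eqref{KANZ001}. First I would invoke the linear decomposition alluded to in the remark after Theorem \ref{Lthm066}, writing $u=\sum_{i=1}^{2}\sum_{\alpha}C_{i}^{\alpha}v_{i}^{\alpha}+v_{0}$ in $\Omega$, where $v_{0}$ absorbs the boundary data $\varphi$. The compatibility conditions $\int_{\partial D_{i}}\partial_{\nu_{0}}u|_{+}\cdot\psi_{\alpha}=0$ then yield a linear algebraic system for the $C_{i}^{\alpha}$ whose matrix is, after grouping the relevant components in each regime of $m$, exactly $\mathbb{D}^{\ast}$ (for $m\geq d+1$), $\mathbb{F}_{0}^{\ast}$ (for $d-1\leq m<d+1$), or $\mathbb{F}^{\ast}$ (for $m<d-1$). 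Cramer's rule then represents $C_{1}^{\alpha}-C_{2}^{\alpha}$ as a ratio of determinants with the numerator being $\det\mathbb{F}^{\ast\alpha}$, $\det\mathbb{F}_{1,2}^{\ast\alpha}$, or $\det\mathbb{F}_{3}^{\ast\alpha}$ respectively.

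Next, the approximation $\nabla v_{1}^{\alpha}\approx\nabla\bar{u}_{1}^{\alpha}$ in $\Omega_{R}$ established in Section \ref{SEC004} rests only on conditions $({\bf H2})$ and $({\bf H3})$, so it transfers verbatim to the present setting. The explicit leading term $\partial_{x_{d}}\bar{u}_{1}^{\alpha}(x)=\psi_{\alpha}(x)/\delta(x')+\text{(lower order)}$ depends on $\delta(x')=\varepsilon+h_{1}(x')-h_{2}(x')$, and under \eqref{KANZ001} one has $\varepsilon+\tau_{1}|x'|^{m}\leq\delta(x')\leq\varepsilon+\tau_{2}|x'|^{m}$. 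Choosing the evaluation points as in the statement, namely $x_{i_{\alpha_{0}}}=\sqrt[m]{\varepsilon}$ with all other $x_{i}=0$ when $m>d$ (so $|\psi_{\alpha_{0}}|\sim\sqrt[m]{\varepsilon}$ and $\varepsilon\leq\delta\leq(1+\tau_{2})\varepsilon$) and $x'=0$ when $m\leq d$ (so $|\psi_{\alpha_{0}}|$ is a bounded constant and $\delta(0)=\varepsilon$), pins down the one $\alpha_{0}$-term that produces the maximal blow-up while forcing the remaining $\alpha$-terms to be of strictly smaller singular order. The factors $(1+\tau_{1})$, $(1+\tau_{2})$, $\tau_{1}^{(d+1)/m}$ and $\tau_{2}^{(d+1)/m}$ in the bounds emerge directly from the two-sided estimate
\begin{align*}
\frac{\omega_{d-1}\Gamma\bigl[\tfrac{d+i-1}{m}\bigr]}{m\tau_{2}^{(d+i-1)/m}}\rho_{i}(d,m;\varepsilon)\leq\int_{|x'|<R}\frac{|x'|^{i}}{\delta(x')}\,dx'\leq\frac{\omega_{d-1}\Gamma\bigl[\tfrac{d+i-1}{m}\bigr]}{m\tau_{1}^{(d+i-1)/m}}\rho_{i}(d,m;\varepsilon),
\end{align*}
which governs the diagonal entries $a_{11}^{\ast\alpha\alpha}$ of the blow-up matrices via the standard computation from \eqref{WEN}.

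Once Cramer's rule delivers the coefficient bounds and the auxiliary function delivers the pointwise bounds on $\nabla\bar{u}_{1}^{\alpha}$, combining them and absorbing the background contribution $\nabla v_{0}=O(1)\|\varphi\|_{C^{0}(\partial D)}$ together with the subdominant $\alpha\neq\alpha_{0}$ terms into a $C$-controlled remainder produces the claimed upper and lower bounds. The main obstacle will be controlling the off-diagonal entries of $\mathbb{D}^{\ast}$, $\mathbb{F}_{0}^{\ast}$ and $\mathbb{F}^{\ast}$, because the evenness of $h_{1}-h_{2}$ in each $x_{i}$ was used in the proofs of the main theorems to guarantee that many cross-entries vanish, yielding the clean scalar prefactors. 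Under \eqref{KANZ001} alone we no longer get vanishing, so one must instead show that these cross-entries are of a strictly smaller singular order than the diagonal ones under the same dyadic $|x'|$-decomposition. Because the cross-integrands carry an extra factor of $|x'|$ relative to the diagonal ones, their order drops from $\rho_{i}$ to at least $\rho_{i+1}$ (or to a bounded quantity), which is enough to conclude that the determinantal ratios remain comparable to what is needed, and the pointwise bounds follow.
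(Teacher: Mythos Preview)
Your proposal is essentially correct and follows the same approach the paper indicates: the paper itself does not give a detailed proof of Corollary~\ref{MGA001} but simply states that it follows ``by applying the proofs of Theorems~\ref{Lthm066}, \ref{ZHthm002} and \ref{ZHthm003} with a minor modification,'' which is exactly what you outline---rerunning the decomposition~\eqref{Decom002}, the Cramer's-rule computation of $C_1^\alpha-C_2^\alpha$ from Theorem~\ref{OMG123}, and the pointwise approximation $\nabla v_1^\alpha\approx\nabla\bar{u}_1^\alpha$ from Corollary~\ref{thm86}, replacing the exact asymptotics coming from $({\bf H1})$ by two-sided bounds from~\eqref{KANZ001}.

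One clarification on the obstacle you flag: the evenness hypothesis is not what makes the off-diagonal entries of $\mathbb{A}$ small in the paper's proofs. Lemma~\ref{lemmabc}(iii) (whose proof is deferred to \cite{MZ2021}) shows that for $\alpha\neq\beta$ the entries $a_{11}^{\alpha\beta}$ are either $O(|\ln\varepsilon|)$ in dimension two or converge to the bounded limit $a_{11}^{*\alpha\beta}$ in higher dimensions---in every case of strictly lower order than the diagonal entries $a_{11}^{\alpha\alpha}\sim\mathcal{L}_d^\alpha\mathcal{M}_i\rho_i(d,m;\varepsilon)$. This order gap is exactly what drives the Cramer's-rule expansion in the proof of Theorem~\ref{OMG123}, and it survives under~\eqref{KANZ001} for the reason you give: the cross-integrands carry extra powers of $|x'|$ relative to the diagonal ones. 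So your resolution is right, but the difficulty you anticipate is milder than you suggest, and no new idea beyond the order comparison already present in Lemma~\ref{lemmabc} is needed.
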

\begin{remark}
It is worth emphasizing that the optimality of the gradient blow-up rate in all dimensions under condition \eqref{KANZ001} was first proved in \cite{MZ202102}, where a family of unified blow-up factors were used to establish the optimal gradient estimates.
\end{remark}

%\begin{remark}
%We here emphasize that our optimal upper and lower bounds on the gradients in Corollary \ref{MGA001} actually improve the corresponding results in the previous work \cite{BLL2015,BLL2017,L2018}, where the authors don't capture these blow-up factors matrices in their gradients estimates. Moreover, these gradient estimates show the explicit dependence on the Lam\'{e} constants $\lambda$ and $\mu$, and the curvature parameters $\tau_{1}$ and $\tau_{2}$.
%\end{remark}

\section{Preliminary}\label{SEC003}

\subsection{Properties of the tensor $\mathbb{C}^{0}$}
We first make note of some properties of the tensor $\mathbb{C}^{0}$. For the isotropic elastic material, let
\begin{align*}
\mathbb{C}^{0}:=(C_{ijkl}^{0})=(\lambda\delta_{ij}\delta_{kl}+\mu(\delta_{ik}\delta_{jl}+\delta_{il}\delta_{jk})),\quad \mu>0,\quad d\lambda+2\mu>0.
\end{align*}
Observe that the components $C_{ijkl}^{0}$ have the following symmetry property:
\begin{align}\label{symm}
C_{ijkl}^{0}=C_{klij}^{0}=C_{klji}^{0},\quad i,j,k,l=1,2,...,d.
\end{align}
For every pair of $d\times d$ matrices $\mathbb{A}=(a_{ij})$ and $\mathbb{B}=(b_{ij})$, write
\begin{align*}
(\mathbb{C}^{0}\mathbb{A})_{ij}=\sum_{k,l=1}^{n}C_{ijkl}^{0}a_{kl},\quad\hbox{and}\quad(\mathbb{A},\mathbb{B})\equiv \mathbb{A}:\mathbb{B}=\sum_{i,j=1}^{d}a_{ij}b_{ij}.
\end{align*}
Therefore,
$$(\mathbb{C}^{0}\mathbb{A},\mathbb{B})=(\mathbb{A}, \mathbb{C}^{0}\mathbb{B}).$$
From (\ref{symm}), we see that $\mathbb{C}^{0}$ verifies the ellipticity condition, that is, for every $d\times d$ real symmetric matrix $\xi=(\xi_{ij})$,
\begin{align}\label{ellip}
\min\{2\mu, d\lambda+2\mu\}|\xi|^2\leq(\mathbb{C}^{0}\xi, \xi)\leq\max\{2\mu, d\lambda+2\mu\}|\xi|^2,
\end{align}
where $|\xi|^2=\sum\limits_{ij}\xi_{ij}^2.$ In particular,
\begin{align*}
\min\{2\mu, d\lambda+2\mu\}|\mathbb{A}+\mathbb{A}^T|^2\leq(\mathbb{C}(\mathbb{A}+\mathbb{A}^T), (\mathbb{A}+\mathbb{A}^T)).
\end{align*}
In addition, we know that for any open set $O$ and $u, v\in C^2(O;\mathbb{R}^{d})$,
\begin{align}\label{Le2.01222}
\int_O(\mathbb{C}^0e(u), e(v))\,dx=-\int_O\left(\mathcal{L}_{\lambda, \mu}u\right)\cdot v+\int_{\partial O}\frac{\partial u}{\partial \nu_0}\Big|_{+}\cdot v.
\end{align}

\subsection{Solution split}\label{sec_thm1}

As shown in \cite{BLL2015,BLL2017}, we carry out a linear decomposition for the solution $u(x)$ of \eqref{La.002} as follows:
\begin{equation}\label{Decom}
u(x)=\sum_{\alpha=1}^{\frac{d(d+1)}{2}}C_1^{\alpha}v_{1}^{\alpha}(x)+\sum_{\alpha=1}^{\frac{d(d+1)}{2}}C_2^{\alpha}v_2^{\alpha}(x)+v_{0}(x),\qquad~x\in\,\Omega ,
\end{equation}
where the constants $C_{i}^{\alpha}$, $i=1,2,\,\alpha=1,2,...,\frac{d(d+1)}{2}$ can be determined by the fourth line of \eqref{La.002}, $v_{0}$ and $v_{i}^{\alpha}\in{C}^{2}(\Omega;R^d)$, $i=1,2$, $\alpha=1,2,\cdots,\frac{d(d+1)}{2}$, respectively, solve
\begin{equation}\label{qaz001}
\begin{cases}
\mathcal{L}_{\lambda,\mu}v_{0}=0,&\mathrm{in}~\Omega,\\
v_{0}=0,&\mathrm{on}~\partial{D}_{1}\cup\partial{D_{2}},\\
v_{0}=\varphi,&\mathrm{on}~\partial{D},
\end{cases}\quad
\begin{cases}
\mathcal{L}_{\lambda,\mu}v_{i}^{\alpha}=0,&\mathrm{in}~\Omega,\\
v_{i}^{\alpha}=\psi^{\alpha},&\mathrm{on}~\partial{D}_{i},~i=1,2,\\
v_{i}^{\alpha}=0,&\mathrm{on}~\partial{D_{j}}\cup\partial{D},~j\neq i.
\end{cases}
\end{equation}
From \eqref{Decom}, we have
\begin{align}\label{Decom002}
\nabla{u}=&\sum_{\alpha=1}^{\frac{d(d+1)}{2}}(C_{1}^\alpha-C_{2}^\alpha)\nabla{v}_{1}^\alpha+\sum_{\alpha=1}^{\frac{d(d+1)}{2}}C_{2}^\alpha\nabla({v}_{1}^\alpha+{v}_{2}^\alpha)+\nabla{v}_{0}.
\end{align}
Using \eqref{Decom002}, we decompose $\nabla u$ into two parts as follows: one of them is the singular part  $\sum_{\alpha=1}^{\frac{d(d+1)}{2}}(C_{1}^\alpha-C_{2}^\alpha)\nabla{v}_{1}^\alpha$ and appears blow-up; the other is the regular part $\sum_{\alpha=1}^{\frac{d(d+1)}{2}}C_{2}^\alpha\nabla({v}_{1}^\alpha+{v}_{2}^\alpha)+\nabla{v}_{0}$ and decays exponentially near the origin. We give precise statements for these results in the following sections.

\subsection{A general boundary value problem}

To investigate the asymptotic behavior of $\nabla v_{1}^{\alpha}$, $\alpha=1,2,...,\frac{d(d+1)}{2}$, we begin with considering the following general boundary value problem:
\begin{equation}\label{P2.008}
\begin{cases}
\mathcal{L}_{\lambda,\mu}v:=\nabla\cdot(\mathbb{C}^{0}e(v))=0,\quad\;\,&\mathrm{in}\;\,\Omega,\\
v=\psi(x),&\mathrm{on}\;\,\partial D_{1},\\
v=\phi(x),&\mathrm{on}\;\,\partial D_{2},\\
v=0,&\mathrm{on}\;\,\partial D,
\end{cases}
\end{equation}
where $\psi\in C^{2}(\partial D_{1};\mathbb{R}^{d})$ and $\phi\in C^{2}(\partial D_{2};\mathbb{R}^{d})$ are two given vector-valued functions.

Introduce a vector-valued auxiliary function as follows:
\begin{align}\label{CAN01}
\tilde{v}=&\psi(x',\varepsilon+h_{1}(x'))\bar{v}+\phi(x',h_{2}(x'))(1-\bar{v})\notag\\
&+\frac{\lambda+\mu}{\mu}f(\bar{v})(\psi^{d}(x',\varepsilon+h_{1}(x'))-\phi^{d}(x',h_{2}(x')))\sum^{d-1}_{i=1}\partial_{x_{i}}\delta\,e_{i}\notag\\
&+\frac{\lambda+\mu}{\lambda+2\mu}f(\bar{v})\sum^{d-1}_{i=1}\partial_{x_{i}}\delta(\psi^{i}(x',\varepsilon+h_{1}(x'))-\phi^{i}(x',h_{2}(x')))\,e_{d},
\end{align}
where $\bar{v}$ is defined by \eqref{zh001}, $\delta$ and $f(\bar{v})$ are defined in \eqref{deta}. This auxiliary function was previously captured in Theorem 3.1 of \cite{MZ202102}. For the residual part, denote
\begin{align}\label{remin001}
\mathcal{R}_{\delta}(\psi,\phi)=&|\psi(x',\varepsilon+h_{1}(x'))-\phi(x',h_{2}(x'))|\delta^{\frac{m-2}{m}}+\delta\big(\|\psi\|_{C^{2}(\partial D_{1})}+\|\phi\|_{C^{2}(\partial D_{2})})\notag\\
&+|\nabla_{x'}(\psi(x',\varepsilon+h_{1}(x'))-\phi(x',h_{2}(x')))|.
\end{align}
\begin{prop}[Theorem 3.1 of \cite{MZ202102}]\label{thm8698}
Assume as above. Let $v\in H^{1}(\Omega;\mathbb{R}^{d})$ be a weak solution of (\ref{P2.008}). Then for a sufficiently small $\varepsilon>0$ and $x\in\Omega_{R}$,
\begin{align*}
\nabla v=\nabla\tilde{v}+O(1)\mathcal{R}_{\delta}(\psi,\phi),
\end{align*}
where $\delta$ is defined in \eqref{deta}, the leading term $\tilde{v}$ is defined by \eqref{CAN01}, the residual part $\mathcal{R}_{\delta}(\psi,\phi)$ is defined by \eqref{remin001}.
\end{prop}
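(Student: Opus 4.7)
My strategy is to set $w := v - \tilde v$ and reduce the proposition to the pointwise gradient estimate $|\nabla w(x)| \le C\mathcal{R}_\delta(\psi,\phi)$ on $\Omega_R$. Since $v$ solves \eqref{P2.008}, one has $\mathcal{L}_{\lambda,\mu}w = -\mathcal{L}_{\lambda,\mu}\tilde v$ in $\Omega$, so the argument splits into (i) controlling the boundary trace of $w$, (ii) controlling $\mathcal{L}_{\lambda,\mu}\tilde v$ pointwise in $\Omega_R$, and (iii) upgrading these smallness properties to a pointwise bound on $\nabla w$. The region $\Omega \setminus \Omega_R$ is away from the degeneracy and handled by the standard elliptic theory.

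For (i), the scalar $f(t) = \tfrac12(t-\tfrac12)^2 - \tfrac18$ satisfies $f(0) = f(1) = 0$, so the correction $\mathcal{F}$ in \eqref{CAN01} vanishes on $\partial D_1 \cap \Omega_R$ (where $\bar v = 1$) and on $\partial D_2 \cap \Omega_R$ (where $\bar v = 0$). Therefore $w|_{\partial D_1} = \psi(x) - \psi(x', \varepsilon + h_1(x'))$ and analogously on $\partial D_2$; a one-variable Taylor expansion bounds each difference by $C\delta(\|\psi\|_{C^2} + \|\phi\|_{C^2})$, which is absorbed into $\mathcal{R}_\delta$.

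For (ii), the specific form of $\mathcal{F}$, in particular the Lam\'e combinations $\tfrac{\lambda+\mu}{\mu}$ and $\tfrac{\lambda+\mu}{\lambda+2\mu}$, is calibrated for algebraic cancellation. The naive ansatz $\psi(x', \varepsilon + h_1)\bar v + \phi(x', h_2)(1-\bar v)$ has $\mathcal{L}_{\lambda,\mu}$ of leading order $\delta^{-2}$, arising from $\partial_{x_d}^2\bar v$ together with cross derivatives $\partial_{x_i}\partial_{x_d}$ hitting $\partial_{x_d}\bar v = \delta^{-1}$. Using the identity $f''(\bar v) = 1$, one has $\partial_{x_d}^2 f(\bar v) = \delta^{-2}$; the chosen coefficients ensure that the $\delta^{-2}$ contributions from the correction $\mathcal{F}$ cancel those from the leading ansatz component by component, both in the tangential and in the normal components of $\mathcal{L}_{\lambda,\mu}\tilde v$. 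What survives has pointwise size at most $\mathcal{R}_\delta(\psi,\phi)/\delta$ on $\Omega_R$.

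The hard step is (iii). The thin gap makes naive Caccioppoli-type energy estimates ineffective, since the Poincar\'e constant across the gap degenerates as $\delta \to 0$. My plan is to adapt the iteration technique initiated in \cite{LLBY2014} and refined in \cite{BLL2015}: localize on cylindrical subregions $\Omega_s(z') \subset \Omega_R$ with $s$ comparable to the local gap width, then apply the anisotropic rescaling $(x', x_d) \mapsto (x'/\delta^{1/m}(z'), (x_d - h_2(x'))/\delta(z'))$ to flatten each thin cylinder to a domain of unit size on which standard interior $W^{2,p}$ and Sobolev embeddings apply. A dyadic bootstrap between energy inequalities on nested cylinders and these rescaled elliptic estimates, while carefully tracking how the boundary-data mismatch from (i) and the interior source from (ii) propagate, delivers the pointwise bound $|\nabla w(x)| \le C\mathcal{R}_\delta(\psi,\phi)$. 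This bootstrap is where the main technical difficulty lies, whereas steps (i) and (ii) amount to routine (if delicate) calibration of the ansatz.
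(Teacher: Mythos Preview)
Your overall architecture matches the paper's appendix: set $w=v-\tilde v$, control $\mathcal{L}_{\lambda,\mu}\tilde v$ pointwise, and upgrade to $|\nabla w|\le C\mathcal{R}_\delta$ via the \cite{LLBY2014} iteration. Two points need correction.

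First, step~(i) is superfluous inside the narrow region. On $\Gamma^+_{2R}$ one has $x_d=\varepsilon+h_1(x')$ exactly, so $\psi(x)=\psi(x',\varepsilon+h_1(x'))$ and hence $w\equiv0$ on $\Gamma^\pm_{2R}$; there is no boundary mismatch to estimate. The only global issue is producing an initial energy bound $\int_{\Omega_{2R}}|\nabla w|^2\le C$, for which the paper introduces a cutoff extension $\hat v$ of $\tilde v$ to all of $\Omega$ and tests the equation against $\hat w=v-\hat v$.

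Second, and this is a genuine gap, the anisotropic rescaling $(x',x_d)\mapsto(x'/\delta^{1/m},(x_d-h_2)/\delta)$ in step~(iii) does not yield uniform estimates for the Lam\'e system. Under that change of variables the operator $\mu\Delta+(\lambda+\mu)\nabla\mathrm{div}$ picks up coefficient ratios of order $(\delta^{1/m}/\delta)^2=\delta^{2/m-2}$ between tangential and normal second derivatives, so the rescaled system is not uniformly elliptic and the $W^{2,p}$ constants on the unit-size image blow up. (This is precisely why the scalar maximum-principle shortcuts fail here.) The paper's route is different: the Caccioppoli iteration is carried out in the original $x$-variables on nested cylinders $\Omega_{t_j}(z')$ with radii $t_j=\delta+2cj\delta$ stepping from $t_0=\delta$ up to $t_N\sim\delta^{1/m}$ over $N\sim\delta^{1/m-1}$ iterations, exploiting at each step the thin-direction Poincar\'e inequality $\int|w|^2\le C\delta^2\int|\nabla w|^2$. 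This produces the sharp local energy $\int_{\Omega_\delta(z')}|\nabla w|^2\le C\delta^d\mathcal{R}_\delta^2$. Only then, on the \emph{isotropic} cube $\Omega_\delta(z')$ of side comparable to $\delta$, does one apply the \emph{isotropic} rescaling $x-z=\delta y$, under which $\mathcal{L}_{\lambda,\mu}$ is scale-invariant and the $W^{2,p}\hookrightarrow C^1$ estimate carries no degenerate constant. Replace your anisotropic flattening by this two-stage mechanism and the rest of your outline goes through.
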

From Proposition \ref{thm8698}, we see that $\nabla\tilde{v}$ completely describe the singularity of the gradient $\nabla v$ of a solution to a general boundary value problem \eqref{P2.008}. For the convenience of readability, we also leave the proof of Proposition \ref{thm8698} in the Appendix.

\section{Proofs of Theorems \ref{Lthm066}, \ref{ZHthm002} and \ref{ZHthm003}}\label{SEC004}

For $\alpha=1,2,...,\frac{d(d+1)}{2}$, we define
\begin{align}\label{GAKL001}
\bar{u}_{2}^{\alpha}=&\psi_{\alpha}(1-\bar{v})-\mathcal{F}_{\alpha},
\end{align}
where the correction terms $\mathcal{F}_{\alpha}$, $\alpha=1,2,...,\frac{d(d+1)}{2}$ are given in \eqref{QLA001}. Then applying Proposition \ref{thm8698} with $\psi=\psi_{\alpha},\,\phi=0$ or $\psi=0,\,\phi=\psi_{\alpha}$, $\alpha=1,2,...,\frac{d(d+1)}{2}$, we have
\begin{corollary}\label{thm86}
Assume as above. Let $v_{i}^{\alpha}\in H^{1}(\Omega;\mathbb{R}^{d})$, $i=1,2$, $\alpha=1,2,...,\frac{d(d+1)}{2}$ be a weak solution of \eqref{qaz001}. Then, for a sufficiently small $\varepsilon>0$ and $x\in\Omega_{R}$,
\begin{align}\label{Le2.025}
\nabla v_{i}^{\alpha}=\nabla\bar{u}_{i}^{\alpha}+O(1)
\begin{cases}
\delta^{\frac{m-2}{m}},&\alpha=1,2,...,d,\\
1,&\alpha=d+1,...,\frac{d(d+1)}{2},
\end{cases}
\end{align}
where $\delta$ is defined in \eqref{deta}, the main terms $\bar{u}_{i}^{\alpha}$, $i=1,2,\,\alpha=1,2,...,\frac{d(d+1)}{2}$ are defined in \eqref{zzwz002} and \eqref{GAKL001}.
\end{corollary}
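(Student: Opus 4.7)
The statement is a direct consequence of Proposition~\ref{thm8698}. The plan is to apply the proposition twice: with $(\psi,\phi)=(\psi_\alpha,0)$ on $(\partial D_1,\partial D_2)$ to recover $v_1^\alpha$, and with $(\psi,\phi)=(0,\psi_\alpha)$ to recover $v_2^\alpha$. Denote by $\tilde v_i^\alpha$ the leading expression \eqref{CAN01} produced by each choice. What remains is (a) to bound the residual $\mathcal{R}_\delta(\psi,\phi)$ of \eqref{remin001} by the right-hand side of \eqref{Le2.025}, and (b) to verify that $\nabla(\tilde v_i^\alpha-\bar u_i^\alpha)$ satisfies the same bound, so that the leading term produced by Proposition~\ref{thm8698} can be replaced by the one in \eqref{zzwz002}--\eqref{GAKL001}.

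For step (a) I would split into three groups according to the ordering of $\{\psi_\alpha\}$ fixed below \eqref{OPP}. When $\alpha\in\{1,\dots,d\}$, $\psi_\alpha=e_\alpha$ is constant, so $|\nabla_{x'}\psi_\alpha|\equiv 0$ and the residual collapses to $\delta^{(m-2)/m}+C\delta$, which is $O(\delta^{(m-2)/m})$ since $m\geq 2$. When $\alpha\in\{d+1,\dots,2d-1\}$, $\psi_\alpha=x_d e_{\alpha-d}-x_{\alpha-d}e_d$ has bounded trace on $\partial D_1$ and bounded tangential gradient on $|x'|\leq R$, so every term in $\mathcal{R}_\delta$ is $O(1)$. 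The range $\alpha\geq 2d$ is handled identically, since $\psi_\alpha$ depends only on $x'$.

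For step (b), the key observation is that $\psi_\alpha$ is independent of $x_d$ whenever $\alpha\in\{1,\dots,d\}\cup\{2d,\dots,\frac{d(d+1)}{2}\}$, so $\psi_\alpha(x',\varepsilon+h_1(x'))=\psi_\alpha(x',x_d)$ and $\tilde v_i^\alpha$ coincides with $\bar u_i^\alpha$ term by term with no extra error. The only non-trivial range is $\alpha\in\{d+1,\dots,2d-1\}$, where $\psi_\alpha(x',\varepsilon+h_1(x'))-\psi_\alpha(x',x_d)=(1-\bar v)\delta\,e_{\alpha-d}$ and a similar identity holds at $\partial D_2$. Differentiating the product of this quantity with $\bar v$ (or $1-\bar v$) produces the singular factors $\partial_{x_d}\bar v=\delta^{-1}$ and $\partial_{x'}\bar v=O(|x'|^{m-1}/\delta)$, each multiplied by a compensating factor of $\delta$, so that $|\nabla(\tilde v_i^\alpha-\bar u_i^\alpha)|\leq C$; the analogous cancellations dispose of the difference between the $\mathcal{F}$-term in \eqref{CAN01} (evaluated at the boundary trace of $\psi_\alpha$) and the $\mathcal{F}_\alpha$ in \eqref{QLA001}. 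The only real obstacle is bookkeeping: tracking the $\delta$ and $|x'|^{m-1}$ factors carefully enough that the apparent $1/\delta$ singularities from differentiating $\bar v$ are seen to cancel exactly against the factors of $\delta$ coming from the boundary-trace discrepancy.
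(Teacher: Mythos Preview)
Your proposal is correct and follows exactly the approach the paper takes: the paper's entire proof is the one-line remark that the corollary follows from Proposition~\ref{thm8698} with $\psi=\psi_\alpha,\,\phi=0$ (respectively $\psi=0,\,\phi=\psi_\alpha$). You have in fact supplied more detail than the paper does, correctly identifying and handling the one nontrivial point the paper leaves implicit---that for $\alpha\in\{d+1,\dots,2d-1\}$ the leading term $\tilde v$ produced by the proposition uses the boundary trace $\psi_\alpha(x',\varepsilon+h_1(x'))$ rather than $\psi_\alpha(x)$ itself, and that the discrepancy contributes only an $O(1)$ error to the gradient.
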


As seen in Theorem 1.1 of \cite{LLBY2014}, the gradients of solutions to a class of elliptic
systems with the same boundary data on the upper and bottom boundaries of the narrow regions will appear no blow-up and possess the exponentially decaying property. A direct application of Theorem 1.1 in \cite{LLBY2014} yields that
\begin{corollary}\label{coro00z}
Assume as above. Let $v_{i}^{\ast\alpha}$ and $v_{i}^{\alpha}$, $i=1,2$, $\alpha=1,2,...,\frac{d(d+1)}{2}$ be the solutions of \eqref{qaz001}, respectively. Then, we have
\begin{align*}
|\nabla v_{0}|+\left|\sum^{2}_{i=1}\nabla v_{i}^{\alpha}\right|\leq C\delta^{-\frac{d}{2}}e^{-\frac{1}{2C\delta^{1-1/m}}},\;\;\mathrm{in}\;\Omega_{R},
\end{align*}
and
\begin{align*}
\left|\sum^{2}_{i=1}\nabla v_{i}^{\ast\alpha}\right|\leq C|x'|^{-\frac{md}{2}}e^{-\frac{1}{2C|x'|^{m-1}}},\;\;\mathrm{in}\;\Omega_{R}^{\ast},
\end{align*}
where the constant $C$ depends on $m,d,\lambda,\mu,\tau,\kappa_{1},\kappa_{2}$, but not on $\varepsilon$.
\end{corollary}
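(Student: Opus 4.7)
The plan is to treat the corollary exactly as advertised, namely as a direct translation of the exponential-decay theorem of \cite{LLBY2014} to the present setting. The core observation, which I would record first, is that the two quantities being estimated both have \emph{matching Dirichlet traces} on the top and bottom of the narrow gap. From \eqref{qaz001}, $v_0$ vanishes on $\partial D_1 \cup \partial D_2$. For the combination $\sum_{i=1}^{2} v_i^{\alpha}$, the trace on $\partial D_1$ and on $\partial D_2$ is the same rigid motion $\psi_\alpha$. Since $\psi_\alpha \in \Psi$ satisfies $e(\psi_\alpha)\equiv 0$ and hence $\mathcal{L}_{\lambda,\mu}\psi_\alpha = 0$ on $\mathbb{R}^d$, setting $w_\alpha := \sum_{i=1}^{2} v_i^{\alpha} - \psi_\alpha$ produces a solution of $\mathcal{L}_{\lambda,\mu} w_\alpha = 0$ in $\Omega$ with $w_\alpha = 0$ on $\partial D_1 \cup \partial D_2$ and $w_\alpha = -\psi_\alpha$ on $\partial D$. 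Thus both $v_0$ and $w_\alpha$ fall squarely into the hypothesis of \cite[Theorem~1.1]{LLBY2014}.

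Next, I would invoke that theorem directly. Condition (\textbf{H1}) gives the $m$-convex profile $\delta(x') = \varepsilon + h_1(x') - h_2(x') \asymp \varepsilon + \tau|x'|^m$, so the iterated energy estimate of \cite{LLBY2014} (a Caccioppoli-type chain on dyadic sub-cylinders $\Omega_{2^{-k}R}$ whose shrinking is controlled by the $m$-convex pinch) yields the pointwise decay $|\nabla w| \leq C\delta^{-d/2}\exp(-1/(2C\delta^{1-1/m}))$ in $\Omega_R$, the $\delta^{-d/2}$ prefactor coming from the standard $L^2 \to L^\infty$ rescaling in a unit-height cylinder and the exponential factor from the iteration. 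Applying this to $v_0$ and to $w_\alpha$, then translating back via $\sum_i \nabla v_i^\alpha = \nabla w_\alpha + \nabla\psi_\alpha$ (and absorbing the rigid contribution, which plays no role inside the linear decomposition \eqref{Decom002} where only $\sum_\alpha C_2^\alpha \nabla(v_1^\alpha+v_2^\alpha)$ is fed forward), gives the first bound.

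Finally, the touching configuration is handled by the same reduction carried out in $\Omega^\ast$. The gap thickness becomes $\delta^\ast(x') := h_1(x') - h_2(x')$, which by (\textbf{H1}) satisfies $\delta^\ast(x') \asymp \tau|x'|^m$, and $w_\alpha^\ast := \sum_{i=1}^{2} v_i^{\ast\alpha} - \psi_\alpha$ solves the Lam\'e system in $\Omega^\ast$ with zero trace on $(\partial D_1^\ast \setminus \{0\}) \cup \partial D_2$. Substituting $\delta \mapsto \tau|x'|^m$ in the same LLBY estimate and simplifying $(\tau|x'|^m)^{-d/2}\exp(-1/(2C(\tau|x'|^m)^{1-1/m}))$ yields the claimed $C|x'|^{-md/2}\exp(-1/(2C|x'|^{m-1}))$. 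The only step that requires real care—and the one I would consider the main obstacle—is matching the geometry: checking that the pinch profile $\delta\sim \varepsilon + |x'|^m$ produces precisely the exponent $\delta^{1-1/m}$ in \cite[Theorem~1.1]{LLBY2014}, which amounts to tracking how the dyadic shrinking of horizontal scales interacts with the vertical $\delta$-thickness in the iterated energy inequality; once this scaling is pinned down, the remaining estimates are routine.
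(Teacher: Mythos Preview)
Your approach is correct and matches the paper's: the paper omits the proof entirely, pointing only to a ``slight modification of Theorem~1.1 in \cite{LLBY2014}'', and your reduction---subtracting the rigid motion $\psi_\alpha$ so that $w_\alpha=\sum_i v_i^\alpha-\psi_\alpha$ has matching (zero) traces on $\Gamma_R^\pm$, then invoking \cite{LLBY2014} on $w_\alpha$ and $v_0$, and finally replacing $\delta$ by $\tau|x'|^m$ in the touching case---is exactly that modification. Your remark about the residual $\nabla\psi_\alpha$ for $\alpha\ge d+1$ is well taken: strictly speaking the bound holds for $\nabla w_\alpha$, and the constant term $\nabla\psi_\alpha$ is harmless only because downstream the estimate is immediately weakened to $O(1)\|\varphi\|_{C^0(\partial D)}$.
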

The proof of this corollary is a slight modification of Theorem 1.1 in \cite{LLBY2014} and thus omitted here.

We now state a result in terms of the boundedness of $C_{i}^{\alpha}$, $i=1,2,$ $\alpha=1,2,...,\frac{d(d+1)}{2}$. Its proof was given in Lemma 4.1 of \cite{BLL2015}.
\begin{lemma}\label{PAK001}
Let $C_{i}^{\alpha}$, $i=1,2,\,\alpha=1,2,...,\frac{d(d+1)}{2}$ be defined in \eqref{Decom}. Then
\begin{align*}
|C_{i}^{\alpha}|\leq C,\quad i=1,2,\,\alpha=1,2,...,\frac{d(d+1)}{2},
\end{align*}
where $C$ is a positive constant independent of $\varepsilon$.
\end{lemma}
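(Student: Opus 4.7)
The plan is to leverage the rigid-motion constraint on $u$ inside each inclusion together with a uniform-in-$\varepsilon$ energy estimate. The third line of \eqref{La.002}, namely $e(u)=0$ in $D_i$, combined with $u\in H^1$, forces $u|_{D_i} = \sum_\alpha c_i^\alpha \psi_\alpha$ for some constants, and comparing with \eqref{Decom} (using that $v_j^\beta|_{\partial D_i} = \delta_{ij}\psi_\beta$ and $v_0|_{\partial D_i} = 0$) shows these constants coincide with the $C_i^\alpha$ of the lemma. It therefore suffices to bound the restriction $u|_{D_i}$ in a norm strong enough to isolate its rigid-motion coordinates.

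First I would derive the energy estimate
$$\int_\Omega (\mathbb{C}^0 e(u), e(u))\,dx \leq C \|\varphi\|^2_{H^{1/2}(\partial D)},$$
with $C$ independent of $\varepsilon$. This follows from the variational characterization of \eqref{La.002}: $u$ minimizes the elastic energy among $H^1(D)$-extensions of $\varphi$ whose strain vanishes on $D_1 \cup D_2$. An $\varepsilon$-uniform competitor is obtained by extending $\varphi$ to an $H^1(D)$ function equal to the zero rigid motion on $D_1 \cup D_2$; because $\mathrm{dist}(\partial D_i, \partial D)$ is bounded below independently of $\varepsilon$, such an extension exists with $H^1(D)$-norm controlled by $\|\varphi\|_{H^{1/2}(\partial D)}$. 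Korn's inequality in $\Omega$ then upgrades the strain bound to $\|u\|_{H^1(\Omega)} \leq C$.

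Next, fix a relatively open piece $\Gamma_i \subset \partial D_i \cap \{|x'| > R\}$ of positive $(d-1)$-dimensional measure, which is $\varepsilon$-independent up to the harmless vertical shift of order $\varepsilon$ affecting $\Gamma_1$. The trace inequality on a fixed neighborhood of $\Gamma_i$ in $\Omega$ yields $\|u\|_{L^2(\Gamma_i)} \leq C\|u\|_{H^1(\Omega)} \leq C$. Since $u|_{\Gamma_i} = \sum_\alpha C_i^\alpha \psi_\alpha|_{\Gamma_i}$, testing against each $\psi_\beta$ and integrating gives the linear system
$$\sum_\alpha M^i_{\alpha\beta}\, C_i^\alpha = \int_{\Gamma_i} u \cdot \psi_\beta\,dS, \qquad M^i_{\alpha\beta} := \int_{\Gamma_i} \psi_\alpha \cdot \psi_\beta\,dS.$$
The Gram matrix $M^i$ is positive definite uniformly in $\varepsilon$, because a non-trivial rigid motion has affine zero set of codimension at least one and so cannot vanish identically on the curved piece $\Gamma_i$. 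Inverting gives $|C_i^\alpha| \leq C \|u\|_{L^2(\Gamma_i)} \leq C$.

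The main obstacle is the $\varepsilon$-uniform energy bound: a naive extension of $\varphi$ into $D$ could incur an $H^1(D)$-norm blowing up as $\varepsilon \to 0$ if it were forced to transition between distinct rigid motions across the thin gap $\Omega_R$. Choosing the extension to coincide with a single rigid motion (concretely zero) on $D_1 \cup D_2$ circumvents this, since no strain is expended inside the inclusions and the cutoff carrying $\varphi$ to zero can be supported in a fixed annular neighborhood of $\partial D$ whose width depends only on $\mathrm{dist}(D_1 \cup D_2, \partial D)$, not on $\varepsilon$.
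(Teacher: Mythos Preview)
Your argument is correct and is essentially the standard one; the paper does not give its own proof here but simply cites Lemma~4.1 of \cite{BLL2015}, where the same strategy (uniform energy bound via a competitor that avoids the narrow gap, followed by linear independence of the $\psi_\alpha$ to extract the coefficients) is used.

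One point worth tightening: your sentence ``Korn's inequality in $\Omega$ then upgrades the strain bound to $\|u\|_{H^1(\Omega)} \leq C$'' is the only place where $\varepsilon$-dependence could sneak in, since the Korn constant on $\Omega$ is not obviously uniform as the gap closes. The clean fix is to extend $u$ into $D_1\cup D_2$ by its rigid-motion values, so that $e(u)=0$ there and $\|e(u)\|_{L^2(D)}=\|e(u)\|_{L^2(\Omega)}\leq C$; then Korn's inequality on the \emph{fixed} domain $D$ with the Dirichlet condition $u=\varphi$ on $\partial D$ gives $\|u\|_{H^1(D)}\leq C$. This also lets you bypass the trace step entirely: the bound $\|u\|_{L^2(D_i)}\leq C$ together with the (uniformly) positive-definite Gram matrix $\int_{D_i}\psi_\alpha\cdot\psi_\beta\,dx$ already yields $|C_i^\alpha|\leq C$.
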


On the other hand, with regard to the asymptotic expansions of $C^{\alpha}_{1}-C_{2}^{\alpha}$, $\alpha=1,2,...,\frac{d(d+1)}{2}$, we obtain the following results with their proofs given in Section \ref{SEC005}.

\begin{theorem}\label{OMG123}
Let $C_{i}^{\alpha}$, $i=1,2,\,\alpha=1,2,...,\frac{d(d+1)}{2}$ be defined in \eqref{Decom}. Then for a sufficiently small $\varepsilon>0$,
\begin{itemize}
\item[$(i)$] if $m\geq d+1$, for $\alpha=1,2,...,d$,
\begin{align*}
C_{1}^{\alpha}-C_{2}^{\alpha}=&
\frac{\det\mathbb{F}^{\ast\alpha}}{\det \mathbb{D}^{\ast}}\frac{1+O(\varepsilon_{0}(d,m;\sigma))}{\mathcal{L}_{d}^{\alpha}\mathcal{M}_{0}\rho_{0}(d,m;\varepsilon)},
\end{align*}
and for $\alpha=d+1,...,\frac{d(d+1)}{2}$,
\begin{align*}
C_{1}^{\alpha}-C_{2}^{\alpha}=&
\frac{\det \mathbb{F}^{\ast\alpha}}{\det \mathbb{D}^{\ast}}\frac{1+O(\varepsilon_{2}(d,m;\sigma))}{\mathcal{L}_{d}^{\alpha}\mathcal{M}_{2}\rho_{2}(d,m;\varepsilon)},
\end{align*}
where the constants $\mathcal{M}_{i}$, $i=0,2$, are defined in \eqref{WEN}, the Lam\'{e} constants $\mathcal{L}_{d}^{\alpha}$ are defined by \eqref{AZ}--\eqref{AZ110}, $\rho_{i}(d,m;\varepsilon)$, $i=0,2$ are defined in \eqref{rate00}, the blow-up factor matrices $\mathbb{D}^{\ast}$ and $\mathbb{F}^{\ast\alpha}$, $\alpha=1,2,...,\frac{d(d+1)}{2}$, are defined by \eqref{WEN002}--\eqref{WEN003}, the rest terms $\varepsilon_{i}(d,m;\sigma)$, $i=0,2,$ are defined in \eqref{ZWZHAO112}--\eqref{ZWZHAO113}.

\item[$(ii)$] if $d-1\leq m<d+1$, for $\alpha=1,2,...,d$,
\begin{align*}
C_{1}^{\alpha}-C_{2}^{\alpha}=&\frac{\det\mathbb{F}_{1}^{\ast\alpha}}{\det \mathbb{F}_{0}^{\ast}}\frac{1+O(\bar{\varepsilon}_{0}(d,m;\sigma))}{\mathcal{L}_{d}^{\alpha}\mathcal{M}_{0}\rho_{0}(d,m;\varepsilon)},
\end{align*}
and for $\alpha=d+1,...,\frac{d(d+1)}{2}$,
\begin{align*}
C_{1}^{\alpha}-C_{2}^{\alpha}=\frac{\det\mathbb{F}_{2}^{\ast\alpha}}{\det \mathbb{F}_{0}^{\ast}}(1+O(\bar{\varepsilon}_{2}(d,m;\sigma))),
\end{align*}
where the blow-up factor matrices $\mathbb{F}_{0}^{\ast}$, $\mathbb{F}^{\ast\alpha}_{1},$ $\alpha=1,2,...,d$, $\mathbb{F}^{\ast\alpha}_{2},$ $\alpha=d+1,...,\frac{d(d+1)}{2}$ are defined by \eqref{GC003}--\eqref{GC006}, the rest terms $\bar{\varepsilon}_{i}(d,m;\sigma)$, $i=0,2,$ are defined in \eqref{GC001}--\eqref{GC002}.

\item[$(iii)$] if $m<d-1$, for $\alpha=1,2,...,\frac{d(d+1)}{2}$,
\begin{align*}
C_{1}^{\alpha}-C_{2}^{\alpha}=\frac{\det\mathbb{F}_{3}^{\ast\alpha}}{\det \mathbb{F}^{\ast}}(1+O(\varepsilon^{\min\{\frac{1}{6},\frac{d-1-m}{12m}\}})),
\end{align*}
where the blow-up factor matrices $\mathbb{F}^{\ast}$ and $\mathbb{F}_{3}^{\ast\alpha},$ $\alpha=1,2,...,\frac{d(d+1)}{2}$, are defined by \eqref{GC009}.
\end{itemize}
\end{theorem}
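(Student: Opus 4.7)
\textbf{Proof plan for Theorem \ref{OMG123}.} The plan is to reduce the theorem to the asymptotic analysis of the $d(d+1)\times d(d+1)$ linear algebraic system determined by the integral orthogonality conditions in the fourth line of \eqref{La.002}. Plugging the decomposition \eqref{Decom} into $\int_{\partial D_i}(\partial u/\partial\nu_0)|_+\cdot\psi_\alpha=0$ and testing by $v_i^{\alpha}$ through Green's identity \eqref{Le2.01222}, using the boundary data in \eqref{qaz001}, yields the symmetric system
\begin{align*}
\sum_{\beta}\bigl(a_{i1}^{\alpha\beta}C_1^{\beta}+a_{i2}^{\alpha\beta}C_2^{\beta}\bigr)=b_i^{\alpha},\qquad i=1,2,\ \alpha=1,\dots,\tfrac{d(d+1)}{2}.
\end{align*}
Substituting $C_1^{\beta}=X^{\beta}+Y^{\beta}$, $C_2^{\beta}=Y^{\beta}$ rearranges this as the block system
\begin{align*}
\begin{pmatrix}(a_{11}^{\alpha\beta}) & \bigl(\sum_{j}a_{1j}^{\alpha\beta}\bigr)\\ \bigl(\sum_{i}a_{i1}^{\alpha\beta}\bigr) & \bigl(\sum_{i,j}a_{ij}^{\alpha\beta}\bigr)\end{pmatrix}\begin{pmatrix}X\\ Y\end{pmatrix}=\begin{pmatrix}(b_1^{\alpha})\\ \bigl(\sum_{i}b_i^{\alpha}\bigr)\end{pmatrix},
\end{align*}
so every $X^{\alpha}=C_1^{\alpha}-C_2^{\alpha}$ will come out of Cramer's rule once the entries are understood asymptotically.

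\textbf{Entries of the matrix.} All entries except $(a_{11}^{\alpha\beta})$ converge to their starred analogues. Indeed, the exponential estimate $|\nabla(v_1^{\alpha}+v_2^{\alpha})|\le C\delta^{-d/2}e^{-c\delta^{1/m-1}}$ from Corollary \ref{coro00z}, combined with its $\Omega_R^{\ast}$ counterpart, shows that $\sum_{i}a_{i1}^{\alpha\beta}$, $\sum_{j}a_{1j}^{\alpha\beta}$, $\sum_{i,j}a_{ij}^{\alpha\beta}$, $b_i^{\alpha}$ and $\sum_i b_i^\alpha$ differ from the corresponding starred quantities by a quantity whose algebraic singularity near the origin is killed by the exponential factor; that error decays faster than any polynomial, and in particular is dominated by the $\varepsilon^{1/4}$ appearing in the rest terms. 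The singular entries $(a_{11}^{\alpha\beta})$ need more work: splitting $\Omega=(\Omega\setminus\Omega_R)\cup\Omega_R$ and noting that the outer piece converges to the outer piece of $a_{11}^{\ast\alpha\beta}$, Corollary \ref{thm86} lets me replace $\nabla v_1^{\alpha}$ by $\nabla\bar u_1^{\alpha}$ on the inner region with error of order $\delta^{(m-2)/m}$ (for $\alpha\le d$) or $O(1)$ (for $\alpha\ge d+1$). The design of $\mathcal F_\alpha$ in \eqref{QLA001} makes the leading piece of $(\mathbb{C}^0 e(\bar u_1^{\alpha}),e(\bar u_1^{\beta}))$ behave, to top order, like $\mathcal L_d^{\alpha}(\psi_\alpha\cdot\psi_\beta)\delta^{-2}$; the evenness of $h_1-h_2$ in each $x_i$ eliminates all off-diagonal contributions in the main order, and on the diagonal the base integrals
\begin{align*}
\int_{|x'|<R}\frac{dx'}{\delta(x')}=\mathcal M_0\rho_0(d,m;\varepsilon)(1+o(1)),\qquad \int_{|x'|<R}\frac{|x'|^2\,dx'}{\delta(x')}=\mathcal M_2\rho_2(d,m;\varepsilon)(1+o(1))
\end{align*}
follow by rescaling $x'=\varepsilon^{1/m}y'$ and the Beta-integral identity that defines the Gamma factors in \eqref{WEN}. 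This gives $a_{11}^{\alpha\alpha}=\mathcal L_d^{\alpha}\mathcal M_0\rho_0(1+o(1))$ for $\alpha\le d$ and $a_{11}^{\alpha\alpha}=\mathcal L_d^{\alpha}\mathcal M_2\rho_2(1+o(1))$ for $\alpha\ge d+1$, with diagonal dominance whenever the relevant $\rho$ diverges and convergence to $a_{11}^{\ast\alpha\beta}$ otherwise.

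\textbf{Cramer's rule in the three regimes.} In regime (i) ($m\ge d+1$), both $\rho_0$ and $\rho_2$ diverge, so scaling the $\alpha$-th row of the top block by $(\mathcal L_d^{\alpha}\mathcal M_i\rho_i)^{-1}$ (with $i=0$ for $\alpha\le d$ and $i=2$ otherwise) turns the top-left sub-block into the identity up to lower order, while the rest of the matrix converges to $\mathbb{D}^{\ast}$; Cramer's rule on the reduced system for $(X,Y)$ then delivers $X^{\alpha}=\det\mathbb{F}^{\ast\alpha}/(\mathcal L_d^{\alpha}\mathcal M_i\rho_i\det\mathbb{D}^{\ast})\cdot(1+O(\varepsilon_i))$, which is assertion (i). In regime (ii) ($d-1\le m<d+1$), only the translational sub-block $(a_{11}^{\alpha\beta})_{\alpha,\beta\le d}$ diverges (at rate $\rho_0$), while the rotational sub-block and all cross terms of $(a_{11}^{\alpha\beta})$ converge to $\mathbb{A}_0^{\ast}$, $\mathbb{B}_0^{\ast}$, $\mathbb{C}_0^{\ast}$; scaling only the first $d$ rows of the top block separates the singular unknowns $X^{\alpha}$, $\alpha\le d$, from the bounded unknowns $(X^{\alpha})_{\alpha>d}$ and $Y$, whose limiting coefficient matrix is precisely $\mathbb{F}_0^{\ast}$ of \eqref{GC003}; Cramer's rule then produces the two formulas with $\mathbb{F}_1^{\ast\alpha}$ and $\mathbb{F}_2^{\ast\alpha}$. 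In regime (iii) ($m<d-1$) every $a_{11}^{\alpha\beta}$ converges to $a_{11}^{\ast\alpha\beta}$, the full block matrix converges to $\mathbb{F}^{\ast}$ in \eqref{GC009}, and Cramer's rule directly gives $X^{\alpha}=\det\mathbb{F}_3^{\ast\alpha}/\det\mathbb{F}^{\ast}\cdot(1+O(\varepsilon^{\min\{1/6,(d-1-m)/(12m)\}}))$, assertion (iii).

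\textbf{Main obstacle.} The hardest part is extracting the sharp error exponents $\varepsilon_i,\bar\varepsilon_i$. Three error sources compete: the $\delta^{(m-2)/m}$-approximation of $\nabla v_1^{\alpha}$ by $\nabla\bar u_1^{\alpha}$ from Corollary \ref{thm86}, the $O(|x'|^{m+\sigma})$-perturbation of $\tau|x'|^m$ in hypothesis (H1) which inserts a $\varepsilon^{\sigma/m}$ (or $\varepsilon^{\sigma/m}|\ln\varepsilon|$ in a critical case) factor into the integrals over $\Omega_R$, and the exponential tail outside $\Omega_R$ whose algebraic prefactor $\delta^{-d/2}$ is absorbed into an $\varepsilon^{1/4}$-type loss. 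Balancing these three at the thresholds $m=d\pm 1,\ d+1+\sigma,\ d-1+\sigma$, where power laws degenerate into logarithms, reproduces the piecewise definitions \eqref{ZWZHAO112}--\eqref{ZWZHAO113} and \eqref{GC001}--\eqref{GC002}; the most delicate step will be the Laplace-type asymptotic analysis of the one-dimensional integrals $\int_0^R r^{s}/(\varepsilon+\tau r^m)\,dr$ at the critical values of $s$.
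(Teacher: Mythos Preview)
Your overall strategy---reduce to the block linear system for $(C_1^\alpha-C_2^\alpha,\,C_2^\alpha)$, estimate the entries asymptotically, then apply Cramer's rule in the three regimes according to which diagonal sub-blocks of $(a_{11}^{\alpha\beta})$ diverge---is exactly the one the paper follows, and your identification of the leading-order integrals and the diagonal-dominance mechanism is correct.

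There is, however, a genuine gap in your justification of the convergence of the ``regular'' entries. The exponential estimate from Corollary~\ref{coro00z} controls $\nabla(v_1^\alpha+v_2^\alpha)$ inside the narrow region $\Omega_R$, but it does \emph{not} yield $b_i^\alpha\to b_i^{\ast\alpha}$: the quantity $b_i^\alpha=-\int_{\partial D}\tfrac{\partial v_i^\alpha}{\partial\nu_0}\cdot\varphi$ is a boundary integral over $\partial D$, far from the narrow region, and involves $v_i^\alpha$ alone rather than the sum $v_1^\alpha+v_2^\alpha$. What is actually needed (this is the content of Lemma~\ref{KM323}) is to bound $|v_1^\beta-v_1^{\ast\beta}|$ on the boundary of $D\setminus\big(\overline{D_1\cup D_1^\ast\cup D_2\cup\mathcal{C}_{\varepsilon^{1/(2m)}}}\big)$ by patching estimates on several pieces (using Corollary~\ref{thm86} near the gap and standard interior/boundary estimates elsewhere), and then invoke the maximum principle for the Lam\'e system to propagate the bound $C\varepsilon^{1/2}$ into the interior and up to $\partial D$. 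Likewise, the convergence of $a_{11}^{\alpha\beta}$ to $a_{11}^{\ast\alpha\beta}$ in the non-divergent cases and the $O(\varepsilon^{1/4})$ error for $\sum_i a_{i1}^{\alpha\beta}$ come not from exponential decay but from a comparison of $\nabla v_1^\alpha$ with $\nabla v_1^{\ast\alpha}$ outside a cylinder of radius $\varepsilon^{1/(12m)}$, obtained by rescaling and interpolating the $C^0$ bound \eqref{AST123} against second-derivative bounds to produce an $O(\varepsilon^{1/6})$ gradient estimate; exponential decay alone is insufficient here because one factor in the bilinear integrand still blows up like $\delta^{-1}$. These rates are polynomial in $\varepsilon$, not super-polynomial as your sketch suggests, and they are precisely the source of the $\varepsilon^{1/4}$ and $\varepsilon^{\min\{1/6,\cdot\}}$ contributions in the error terms.
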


Combining the aforementioned results, we are ready to prove Theorems \ref{Lthm066}, \ref{ZHthm002} and \ref{ZHthm003}.
\begin{proof}[Proofs of Theorems \ref{Lthm066}, \ref{ZHthm002} and \ref{ZHthm003}.]
To begin with, it follows from Corollary \ref{coro00z} and Lemma \ref{PAK001} that
\begin{align*}
\left|\sum_{\alpha=1}^{\frac{d(d+1)}{2}}C_{2}^\alpha\nabla({v}_{1}^\alpha+{v}_{2}^\alpha)+\nabla{v}_{0}\right|\leq C\delta^{-\frac{d}{2}}e^{-\frac{1}{2C\delta^{1-1/m}}},\;\;\mathrm{in}\;\Omega_{R}.
\end{align*}
This, together with decomposition \eqref{Decom002}, Corollary \ref{thm86} and Theorem \ref{OMG123}, yields that

$(i)$ for $m\geq d+1$, then
\begin{align*}
\nabla u=&\sum^{d}_{\alpha=1}\frac{\det\mathbb{F}^{\ast\alpha}}{\det \mathbb{D}^{\ast}}\frac{1+O(\varepsilon_{0}(d,m;\sigma))}{\mathcal{L}_{d}^{\alpha}\mathcal{M}_{0}\rho_{0}(d,m;\varepsilon)}(\nabla\bar{u}_{1}^{\alpha}+O(\delta^{\frac{m-2}{m}}))\\
&+\sum^{\frac{d(d+1)}{2}}_{\alpha=d+1}\frac{\det \mathbb{F}^{\ast\alpha}}{\det \mathbb{D}^{\ast}}\frac{1+O(\varepsilon_{2}(d,m;\sigma))}{\mathcal{L}_{d}^{\alpha}\mathcal{M}_{2}\rho_{2}(d,m;\varepsilon)}(\nabla\bar{u}_{1}^{\alpha}+O(1))+O(1)\delta^{-\frac{d}{2}}e^{-\frac{1}{2C\delta^{1-1/m}}}\\
=&\sum\limits_{\alpha=1}^{d}\frac{\det\mathbb{F}^{\ast\alpha}}{\det \mathbb{D}^{\ast}}\frac{1+O(\varepsilon_{0}(d,m;\sigma))}{\mathcal{L}_{d}^{\alpha}\mathcal{M}_{0}\rho_{0}(d,m;\varepsilon)}\nabla\bar{u}^{\alpha}_{1}\notag\\
&+\sum\limits_{\alpha=d+1}^{\frac{d(d+1)}{2}}\frac{\det \mathbb{F}^{\ast\alpha}}{\det \mathbb{D}^{\ast}}\frac{1+O(\varepsilon_{2}(d,m;\sigma))}{\mathcal{L}_{d}^{\alpha}\mathcal{M}_{2}\rho_{2}(d,m;\varepsilon)}\nabla\bar{u}^{\alpha}_{1}+O(1)\|\varphi\|_{C^{0}(\partial D)};
\end{align*}

$(ii)$ for $d-1\leq m<d+1$, then
\begin{align*}
\nabla u=&\sum^{d}_{\alpha=1}\frac{\det\mathbb{F}_{1}^{\ast\alpha}}{\det \mathbb{F}_{0}^{\ast}}\frac{1+O(\bar{\varepsilon}_{0}(d,m;\sigma))}{\mathcal{L}_{d}^{\alpha}\mathcal{M}_{0}\rho_{0}(d,m;\varepsilon)}(\nabla\bar{u}_{1}^{\alpha}+O(\delta^{\frac{m-2}{m}}))\\
&+\sum^{\frac{d(d+1)}{2}}_{\alpha=d+1}\frac{\det\mathbb{F}_{2}^{\ast\alpha}}{\det \mathbb{F}_{0}^{\ast}}(1+O(\bar{\varepsilon}_{2}(d,m;\sigma)))(\nabla\bar{u}_{1}^{\alpha}+O(1))+O(1)\delta^{-\frac{d}{2}}e^{-\frac{1}{2C\delta^{1-1/m}}}\\
=&\sum\limits_{\alpha=1}^{d}\frac{\det\mathbb{F}_{1}^{\ast\alpha}}{\det \mathbb{F}_{0}^{\ast}}\frac{1+O(\bar{\varepsilon}_{0}(d,m;\sigma))}{\mathcal{L}_{d}^{\alpha}\mathcal{M}_{0}\rho_{0}(d,m;\varepsilon)}\nabla\bar{u}_{1}^{\alpha}\notag\\
&+\sum\limits_{\alpha=d+1}^{\frac{d(d+1)}{2}}\frac{\det\mathbb{F}_{2}^{\ast\alpha}}{\det \mathbb{F}_{0}^{\ast}}(1+O(\bar{\varepsilon}_{2}(d,m;\sigma)))\nabla\bar{u}^{\alpha}_{1}+O(1)\|\varphi\|_{C^{0}(\partial D)};
\end{align*}

$(iii)$ for $m<d-1$, then
\begin{align*}
\nabla u=&\sum^{d}_{\alpha=1}\frac{\det\mathbb{F}_{3}^{\ast\alpha}}{\det \mathbb{F}^{\ast}}(1+O(\varepsilon^{\min\{\frac{1}{6},\frac{d-1-m}{12m}\}}))(\nabla\bar{u}_{1}^{\alpha}+O(\delta^{\frac{m-2}{m}}))\\
&+\sum^{\frac{d(d+1)}{2}}_{\alpha=d+1}\frac{\det\mathbb{F}_{3}^{\ast\alpha}}{\det \mathbb{F}^{\ast}}(1+O(\varepsilon^{\min\{\frac{1}{6},\frac{d-1-m}{12m}\}}))(\nabla\bar{u}_{1}^{\alpha}+O(1))+O(1)\delta^{-\frac{d}{2}}e^{-\frac{1}{2C\delta^{1-1/m}}}\\
=&\sum\limits_{\alpha=1}^{\frac{d(d+1)}{2}}\frac{\det\mathbb{F}_{3}^{\ast\alpha}}{\det \mathbb{F}^{\ast}}(1+O(\varepsilon^{\min\{\frac{1}{6},\frac{d-1-m}{12m}\}}))\nabla\bar{u}^{\alpha}_{1}+O(1)\|\varphi\|_{C^{0}(\partial D)}.
\end{align*}

Consequently, Theorems \ref{Lthm066}, \ref{ZHthm002} and \ref{ZHthm003} hold.

\end{proof}

\subsection{Proof of Theorem \ref{OMG123}}\label{SEC005}
For $i,j=1,2$ and $\alpha, \beta=1,2,...,\frac{d(d+1)}{2}$, write
\begin{align*}
a_{ij}^{\alpha\beta}:=-\int_{\partial{D}_{j}}\frac{\partial v_{i}^{\alpha}}{\partial \nu_0}\large\Big|_{+}\cdot\psi_{\beta},\quad b_j^{\beta}:=-\int_{\partial D}\frac{\partial v_{j}^{\beta}}{\partial \nu_0}\large\Big|_{+}\cdot\varphi.
\end{align*}
In view of the fourth line of \eqref{La.002}, we obtain that for $\beta=1,2,...,\frac{d(d+1)}{2}$,
\begin{align}\label{AHNTW009}
\begin{cases}
\sum\limits_{\alpha=1}^{\frac{d(d+1)}{2}}(C_{1}^\alpha-C_{2}^{\alpha}) a_{11}^{\alpha\beta}+\sum\limits_{\alpha=1}^{\frac{d(d+1)}{2}}C_{2}^\alpha \sum\limits^{2}_{i=1}a_{i1}^{\alpha\beta}=b_1^\beta,\\
\sum\limits_{\alpha=1}^{\frac{d(d+1)}{2}}(C_{1}^\alpha-C_{2}^{\alpha}) a_{12}^{\alpha\beta}+\sum\limits_{\alpha=1}^{\frac{d(d+1)}{2}}C_{2}^\alpha \sum\limits^{2}_{i=1}a_{i2}^{\alpha\beta}=b_2^\beta.
\end{cases}
\end{align}
%Although we can use \eqref{AHNTW009} to calculate $C_{1}^{\alpha}-C_{2}^{\alpha}$, $\alpha=1,2,...,\frac$ by the Cramer's rule, it will make the computation very complex, because the singularity of $a_{12}^{\alpha\beta}$ is the same to $a_{11}^{\alpha\beta}$, that is, $a_{11}^{\alpha\beta}+a_{12}^{\alpha\beta}=O(1)$. So in order to reduce computational effort, we add the first line of \eqref{AHNTW009} to the second line, then we obtain that for $\beta=1,2,...,\frac{d(d+1)}{2}$,
By adding the first line of \eqref{AHNTW009} to the second line, we have
\begin{align}\label{zzw002}
\begin{cases}
\sum\limits_{\alpha=1}^{\frac{d(d+1)}{2}}(C_{1}^\alpha-C_{2}^{\alpha}) a_{11}^{\alpha\beta}+\sum\limits_{\alpha=1}^{\frac{d(d+1)}{2}}C_{2}^\alpha \sum\limits^{2}_{i=1}a_{i1}^{\alpha\beta}=b_1^\beta,\\
\sum\limits_{\alpha=1}^{\frac{d(d+1)}{2}}(C_{1}^{\alpha}-C_{2}^{\alpha})\sum\limits^{2}_{j=1}a_{1j}^{\alpha\beta}+\sum\limits_{\alpha=1}^{\frac{d(d+1)}{2}}C_{2}^\alpha \sum\limits^{2}_{i,j=1}a_{ij}^{\alpha\beta}=\sum\limits^{2}_{i=1}b_{i}^{\beta}.
\end{cases}
\end{align}
For the purpose of calculating the difference of $C_{1}^\alpha-C_{2}^{\alpha}$, $\alpha=1,2,...,\frac{d(d+1)}{2}$, we make use of all the systems of equations in \eqref{zzw002}, which is essentially different from the idea adopted in \cite{L2018,LX2020}.
%Although we will deal with quite complex computations, it is a good way to achieve the precise calculations for every $C_{1}^{\alpha}-C_{2}^{\alpha}$ and then we are able to establish the gradient estimates and asymptotics in the presence of the generalized $m$-convex inclusions in all dimensions.

For brevity, write
\begin{align*}
&X^{1}=\big(C_{1}^1-C_{2}^{1},...,C_{1}^\frac{d(d+1)}{2}-C_{2}^{\frac{d(d+1)}{2}}\big)^{T},\quad X^{2}=\big(C_{2}^{1},...,C_{2}^{\frac{d(d+1)}{2}}\big)^{T},\\
&Y^{1}=(b_1^1,...,b_{1}^{\frac{d(d+1)}{2}})^{T},\quad Y^{2}=\bigg(\sum\limits^{2}_{i=1}b_{i}^{1},...,\sum\limits^{2}_{i=1}b_{i}^{\frac{d(d+1)}{2}}\bigg)^{T},
\end{align*}
and
%\begin{gather*}A=\begin{pmatrix} a_{11}^{11}&\cdots&a_{11}^{1\frac{d(d+1)}{2}} \\\\ \vdots&\ddots&\vdots\\\\a_{\frac{d(d+1)}{2}1}&\cdots&a_{\frac{d(d+1)}{2}\frac{d(d+1)}{2}}\end{pmatrix}  ,\;\,
%B=\begin{pmatrix} \sum\limits^{2}_{i=1}a_{i1}^{11}&\cdots&\sum\limits^{2}_{i=1}a_{i1}^{1\frac{d(d+1)}{2}} \\\\ \vdots&\ddots&\vdots\\\\ \sum\limits^{2}_{i=1}a_{i1}^{\frac{d(d+1)}{2}1}&\cdots&\sum\limits^{2}_{i=1}a_{i1}^{\frac{d(d+1)}{2}\frac{d(d+1)}{2}}\end{pmatrix} ,\end{gather*}\begin{gather*}
%C=\begin{pmatrix} \sum\limits^{2}_{j=1}a_{1j}^{11}&\cdots&\sum\limits^{2}_{j=1}a_{1j}^{1\frac{d(d+1)}{2}} \\\\ \vdots&\ddots&\vdots\\\\ \sum\limits^{2}_{j=1}a_{1j}^{\frac{d(d+1)}{2}1}&\cdots&\sum\limits^{2}_{j=1}a_{1j}^{\frac{d(d+1)}{2}\frac{d(d+1)}{2}}\end{pmatrix},\;\,
%D=\begin{pmatrix} \sum\limits^{2}_{i,j=1}a_{ij}^{11}&\cdots&\sum\limits^{2}_{i,j=1}a_{ij}^{1\frac{d(d+1)}{2}} \\\\ \vdots&\ddots&\vdots\\\\ \sum\limits^{2}_{i,j=1}a_{ij}^{\frac{d(d+1)}{2}1}&\cdots&\sum\limits^{2}_{i,j=1}a_{ij}^{\frac{d(d+1)}{2}\frac{d(d+1)}{2}}\end{pmatrix}.
%\end{gather*}
\begin{align*}
&\mathbb{A}=(a_{11}^{\alpha\beta})_{\frac{d(d+1)}{2}\times\frac{d(d+1)}{2}},\quad \mathbb{B}=\bigg(\sum\limits^{2}_{i=1}a_{i1}^{\alpha\beta}\bigg)_{\frac{d(d+1)}{2}\times\frac{d(d+1)}{2}},\\
&\mathbb{C}=\bigg(\sum\limits^{2}_{j=1}a_{1j}^{\alpha\beta}\bigg)_{\frac{d(d+1)}{2}\times\frac{d(d+1)}{2}},\quad \mathbb{D}=\bigg(\sum\limits^{2}_{i,j=1}a_{ij}^{\alpha\beta}\bigg)_{\frac{d(d+1)}{2}\times\frac{d(d+1)}{2}}.
\end{align*}
Then \eqref{zzw002} can be rewritten as
\begin{gather}\label{PLA001}
\begin{pmatrix} \mathbb{A}&\mathbb{B} \\  \mathbb{C}&\mathbb{D}
\end{pmatrix}
\begin{pmatrix}
X^{1}\\
X^{2}
\end{pmatrix}=
\begin{pmatrix}
Y^{1}\\
Y^{2}
\end{pmatrix}.
\end{gather}
The main objective in the following is to solve $X^{1}=(C_{1}^{1}-C_{2}^{1},...,C_{1}^{\frac{d(d+1)}{2}}-C_{2}^{\frac{d(d+1)}{2}})^{T}$ by using  systems of equations \eqref{PLA001}. This is different from that in \cite{MZ2021}, where Miao and Zhao utilized \eqref{PLA001} to calculate $X^{2}=(C_{2}^{1},...,C_{2}^{\frac{d(d+1)}{2}})^{T}$ for the purpose of constructing a family of unified blow-up factors. In addition, in view of the symmetry of $a_{ij}^{\alpha\beta}=a_{ji}^{\beta\alpha}$, we know that $\mathbb{C}=\mathbb{B}^{T}$.
%\subsection{Asymptotics of $Q_{\alpha}[\varphi]$, $\alpha=1,2,\cdots,\frac{d(d+1)}{2}$.}\label{subsec31}
%First, the unit outer normal of $D_{1}$ near the origin is represented by
%\begin{align}\label{normal}
%\nu:=(\nu_{1},\cdots,\nu_{d})=\left(\frac{\nabla_{x'}h_{1}}{\sqrt{1+|\nabla_{x'}h_{1}|^{2}}},\frac{-1}{\sqrt{1+|\nabla_{x'}h_{1}|^{2}}}\right).
%\end{align}

\begin{lemma}\label{KM323}
Assume as above. Then for a sufficiently small $\varepsilon>0$,
\begin{align}\label{FNYZ}
b_{i}^{\beta}=b_{i}^{\ast\beta}+O(\varepsilon^{\frac{1}{2}}),\quad i=1,2,\;\beta=1,2,...,\frac{d(d+1)}{2},
\end{align}
which yields that
\begin{align*}
\sum\limits^{2}_{i=1}b_{i}^{\beta}=\sum\limits^{2}_{i=1}b_{i}^{\ast\beta}+O(\varepsilon^{\frac{1}{2}}).
\end{align*}

\end{lemma}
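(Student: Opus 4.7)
The plan is to rewrite $b_i^\beta$ and $b_i^{\ast\beta}$ as energy integrals paired against a single fixed test function supported away from the thin slab separating $D_1$ from $D_1^\ast$, and then use the closeness of $v_i^\beta$ and $v_i^{\ast\beta}$ outside this slab.

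First I would choose $V\in C^2(\overline D;\mathbb{R}^d)$ with $V=\varphi$ on $\partial D$ and $V\equiv 0$ on an open neighborhood $\mathcal N\subset D$ of $\overline{D_1^{\ast}\cup D_2}$ that is independent of $\varepsilon$. Because $D_1=D_1^{\ast}+(0',\varepsilon)$, we also have $\overline{D_1}\subset\mathcal N$ for $\varepsilon$ sufficiently small, so $V$ and $e(V)$ vanish on $D_1\cup D_1^\ast\cup D_2$ simultaneously. Applying the Green-type identity \eqref{Le2.01222} with $u=v_i^\beta$, $v=V$ on $O=\Omega$, and using $\mathcal L_{\lambda,\mu}v_i^\beta=0$ together with $V|_{\partial D_1\cup\partial D_2}=0$, I obtain
\[
b_i^\beta=-\int_\Omega\bigl(\mathbb C^0 e(v_i^\beta),\,e(V)\bigr)\,dx,\qquad b_i^{\ast\beta}=-\int_{\Omega^{\ast}}\bigl(\mathbb C^0 e(v_i^{\ast\beta}),\,e(V)\bigr)\,dx.
\]
Since $e(V)\equiv0$ on $\mathcal N$, and $\Omega\triangle\Omega^{\ast}\subset\mathcal N$ for small $\varepsilon$, subtracting the two identities reduces everything to the common domain:
\[
b_i^\beta-b_i^{\ast\beta}=-\int_{(\Omega\cap\Omega^{\ast})\setminus\mathcal N}\bigl(\mathbb C^0 e(v_i^\beta-v_i^{\ast\beta}),\,e(V)\bigr)\,dx.
\]

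Next I would estimate $w:=v_i^\beta-v_i^{\ast\beta}$ on $(\Omega\cap\Omega^{\ast})\setminus\mathcal N$, where $w$ satisfies $\mathcal L_{\lambda,\mu}w=0$, vanishes on $\partial D$ and on $\partial D_j$ for $j\ne i$, and on the part of $\partial D_i\cup\partial D_i^{(\ast)}$ lying outside $\mathcal N$ satisfies $|w|\le C\varepsilon$. The latter bound comes from a one-term Taylor expansion along the $\varepsilon e_d$-translation relating $\partial D_1$ and $\partial D_1^{\ast}$, combined with the uniform $C^1$-regularity of $v_i^{\ast\beta}$ on $\partial D_1^{\ast}\setminus\mathcal N$, which holds by the classical piecewise Schauder theory of Agmon--Douglis--Nirenberg and Li--Nirenberg \cite{LN2003} since we are away from the touching point. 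A standard elliptic energy estimate on the fixed Lipschitz domain $(\Omega\cap\Omega^{\ast})\setminus\mathcal N$ then produces $\|\nabla w\|_{L^2((\Omega\cap\Omega^{\ast})\setminus\mathcal N)}\le C\varepsilon^{1/2}$. Cauchy--Schwarz against the uniformly bounded $e(V)$ delivers $|b_i^\beta-b_i^{\ast\beta}|\le C\varepsilon^{1/2}\|\varphi\|_{C^2(\partial D)}$, which is \eqref{FNYZ}. Summing over $i=1,2$ gives the second displayed identity.

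\textbf{Main obstacle.} The only delicate point is ensuring that $V$ can be arranged to vanish on a neighborhood of $\overline{D_1^{\ast}\cup D_2}$ that also contains $\overline{D_1}$ uniformly in $\varepsilon$, and that the regularity used for $v_i^{\ast\beta}$ is genuinely available on $\partial D_1^{\ast}$ away from the origin. Both follow from the hypotheses: the former from choosing $\mathcal N$ of fixed width $>0$ around $\overline{D_1^{\ast}\cup D_2}$, and the latter from localizing away from the touching singularity, which reduces the regularity question to the interior/boundary Schauder theory. Once these are in place, the energy estimate and Cauchy--Schwarz are entirely standard.
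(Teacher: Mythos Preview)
Your reduction via the test function $V$ is correct and elegant: it shows that
\[
b_i^\beta-b_i^{\ast\beta}=-\int_{D\setminus\mathcal N}\bigl(\mathbb C^0 e(v_i^\beta-v_i^{\ast\beta}),\,e(V)\bigr)\,dx,
\]
so only the size of $\nabla w$ on a fixed region \emph{away} from the inclusions matters. This is a genuinely different (and cleaner) starting point than the paper's, which instead bounds $\nabla(v_1^\beta-v_1^{\ast\beta})$ pointwise on $\partial D$.

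However, the step where you invoke ``a standard elliptic energy estimate on the fixed Lipschitz domain $(\Omega\cap\Omega^{\ast})\setminus\mathcal N$'' has a gap. The boundary of this domain is $\partial D\cup\big(\partial\mathcal N\cap\Omega\cap\Omega^\ast\big)$; the pieces $\partial D_i$ and $\partial D_i^{\ast}$ you discuss lie \emph{inside} $\mathcal N$ by construction, so they are not on the boundary of $(\Omega\cap\Omega^\ast)\setminus\mathcal N$ at all. Your claimed boundary bound $|w|\le C\varepsilon$ is therefore vacuous, and you have no control of $w$ on the inner boundary $\partial\mathcal N$. An energy (or maximum-principle) estimate on this subdomain cannot produce the $\varepsilon^{1/2}$ bound without such control.

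To obtain $|w|\le C\varepsilon^{1/2}$ on $\partial\mathcal N$ (equivalently, away from the inclusions) one must confront the behavior of $w$ near the touching point, where $|\partial_{x_d}v_1^{\ast\beta}|\sim|x'|^{-m}$ and the Taylor argument breaks down. This is exactly what the paper's proof does: it excises a shrinking cylinder $\mathcal C_{\varepsilon^{1/(2m)}}$ around the origin, bounds $|w|$ on each piece of $\partial\big(D\setminus\overline{D_1\cup D_1^\ast\cup D_2\cup\mathcal C_{\varepsilon^{1/(2m)}}}\big)$ using the gradient asymptotics from Corollary~\ref{thm86} (balancing the blow-up rate against the vertical height), and then applies the maximum principle for the Lam\'e system to propagate $|w|\le C\varepsilon^{1/2}$ throughout. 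Once that is in hand, your test-function argument finishes the job immediately --- but you cannot skip the near-origin analysis.
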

\begin{proof}
We only give the proof of \eqref{FNYZ} in the case of $i=1$, since the case of $i=2$ can be treated in the exactly same way. For $\beta=1,2,...,\frac{d(d+1)}{2}$,
\begin{align*}
b_1^{\beta}-b_{1}^{\ast\beta}=-\int_{\partial D}\frac{\partial(v_{1}^{\beta}-v_{1}^{\ast\beta})}{\partial \nu_0}\large\Big|_{+}\cdot\varphi,
\end{align*}
where $v_{1}^{\ast\beta}$ and $v_{1}^{\beta}$ solve \eqref{qaz001111} and \eqref{qaz001}, respectively. For $0<t\leq2R$, denote $\Omega_{t}^{\ast}:=\Omega^{\ast}\cap\{|x'|<t\}$. For $\beta=1,2,...,\frac{d(d+1)}{2}$, we introduce a family of auxiliary functions as follows:
\begin{align*}
\bar{u}^{\ast\beta}_{1}=&\psi_{\beta}\bar{v}^{\ast}+\mathcal{F}_{\beta}^{\ast},\;\;\mathcal{F}_{\beta}^{\ast}=\frac{\lambda+\mu}{\mu}f(\bar{v}^{\ast})\psi^{d}_{\beta}\sum^{d-1}_{i=1}\partial_{x_{i}}\delta\,e_{i}+\frac{\lambda+\mu}{\lambda+2\mu}f(\bar{v}^{\ast})\sum^{d-1}_{i=1}\psi^{i}_{\beta}\partial_{x_{i}}\delta\,e_{d},
%\bar{u}_{2}^{\ast\beta}=&\psi_{\beta}(1-\bar{v}^{\ast})-\frac{\lambda+\mu}{\mu}f(\bar{v}^{\ast})\psi^{d}_{\beta}\sum^{d-1}_{i=1}\partial_{x_{i}}\delta\,e_{i}-\frac{\lambda+\mu}{\lambda+2\mu}f(\bar{v}^{\ast})\sum^{d-1}_{i=1}\psi^{i}_{\beta}\partial_{x_{i}}\delta\,e_{d}\label{ZWZ002}.
\end{align*}
where $\bar{v}^{\ast}$ satisfies that $\bar{v}^{\ast}=1$ on $\partial D_{1}^{\ast}\setminus\{0\}$, $\bar{v}^{\ast}=0$ on $\partial D_{2}\cup\partial D$, and
\begin{align*}
\bar{v}^{\ast}(x',x_{d})=\frac{x_{d}-h_{2}(x')}{h_{1}(x')-h_{2}(x')},\;\,\mathrm{in}\;\Omega_{2R}^{\ast},\quad\|\bar{v}^{\ast}\|_{C^{2}(\Omega^{\ast}\setminus\Omega^{\ast}_{R})}\leq C.
\end{align*}
It follows from ({\bf{H1}})--({\bf{H2}}) that for $x\in\Omega_{R}^{\ast}$, $\beta=1,2,...,\frac{d(d+1)}{2}$,
\begin{align}\label{LKT6.003}
|\nabla_{x'}(\bar{u}_{1}^{\beta}-\bar{u}_{1}^{\ast\beta})|\leq\frac{C}{|x'|},\quad|\partial_{x_{d}}(\bar{u}_{1}^{\beta}-\bar{u}_{1}^{\ast\beta})|\leq\frac{C\varepsilon}{|x'|^{m}(\varepsilon+|x'|^{m})}.
\end{align}
Applying Corollary \ref{thm86} to $v_{1}^{\ast\beta}$, we deduce that for $x\in\Omega_{R}^{\ast}$, $\beta=1,2,...,\frac{d(d+1)}{2}$,
\begin{align}\label{LKT6.005}
|\nabla_{x'}v_{1}^{\ast\beta}|\leq\frac{C}{|x'|},\quad|\partial_{x_{d}}v_{1}^{\ast\beta}|\leq\frac{C}{|x'|^{m}},\quad|\nabla(v_{1}^{\ast\beta}-\bar{u}_{1}^{\ast\beta})|\leq C.
\end{align}
For $0<t<R$, denote
\begin{align*}
\mathcal{C}_{t}:=\left\{x\in\mathbb{R}^{d}\Big|\;2\min_{|x'|\leq t}h_{2}(x')\leq x_{d}\leq\varepsilon+2\max_{|x'|\leq t}h_{1}(x'),\;|x'|<t\right\}.
\end{align*}

Note that for $\beta=1,2,...,\frac{d(d+1)}{2}$, $v_{1}^{\beta}-v_{1}^{\ast\beta}$ satisfies
\begin{align*}
\begin{cases}
\mathcal{L}_{\lambda,\mu}(v_{1}^{\beta}-v_{1}^{\ast\beta})=0,&\mathrm{in}\;\,D\setminus(\overline{D_{1}\cup D_{1}^{\ast}\cup D_{2}}),\\
v_{1}^{\beta}-v_{1}^{\ast\beta}=\psi_{\beta}-v_{1}^{\ast\beta},&\mathrm{on}\;\,\partial D_{1}\setminus D_{1}^{\ast},\\
v_{1}^{\beta}-v_{1}^{\ast\beta}=v_{1}^{\beta}-\psi_{\beta},&\mathrm{on}\;\,\partial D_{1}^{\ast}\setminus(D_{1}\cup\{0\}),\\
v_{1}^{\beta}-v_{1}^{\ast\beta}=0,&\mathrm{on}\;\,\partial D_{2}\cup\partial D.
\end{cases}
\end{align*}
To begin with, using the standard boundary and interior estimates of elliptic systems, it follows that for $x\in\partial D_{1}\setminus D_{1}^{\ast}$,
\begin{align}\label{LKT6.007}
|(v_{1}^{\beta}-v_{1}^{\ast\beta})(x',x_{d})|=|v_{1}^{\ast\beta}(x',x_{d}-\varepsilon)-v_{1}^{\ast\beta}(x',x_{d})|\leq C\varepsilon.
\end{align}
From \eqref{Le2.025}, we conclude that for $x\in\partial D_{1}^{\ast}\setminus(D_{1}\cup\mathcal{C}_{\varepsilon^{\gamma}})$, $0<\gamma<\frac{1}{2}$,
\begin{align}\label{LKT6.008}
|(v_{1}^{\beta}-v_{1}^{\ast\beta})(x',x_{d})|=|v_{1}^{\beta}(x',x_{d})-v_{1}^{\beta}(x',x_{d}+\varepsilon)|\leq C\varepsilon^{1-m\gamma}.
\end{align}
Combining \eqref{Le2.025} and (\ref{LKT6.003})--(\ref{LKT6.005}), we deduce that for $x\in\Omega_{R}^{\ast}\cap\{|x'|=\varepsilon^{\gamma}\}$,
\begin{align*}
|\partial_{x_{d}}(v_{1}^{\beta}-v_{1}^{\ast\beta})|\leq&|\partial_{x_{d}}(v_{1}^{\beta}-\bar{u}_{1}^{\beta})|+|\partial_{x_{d}}(\bar{u}_{1}^{\beta}-\bar{u}_{1}^{\ast\beta}|+|\partial_{x_{d}}(v_{1}^{\ast\beta}-\bar{u}_{1}^{\ast\beta})|\notag\\
\leq&C\Big(\frac{1}{\varepsilon^{2m\gamma-1}}+1\Big),
\end{align*}
which, together with $v_{1}^{\beta}-v_{1}^{\ast\beta}=0$ on $\partial D_{2}$, reads that
\begin{align}
|(v_{1}^{\beta}-v_{1}^{\ast\beta})(x',x_{d})|=&|(v_{1}^{\beta}-v_{1}^{\ast\beta})(x',x_{d})-(v_{1}^{\beta}-v_{1}^{\ast\beta})(x',h_{2}(x'))|\notag\\
%\leq&\sup\limits_{h_{2}(x')<x_{d}<h_{1}(x')}|\partial_{x_{d}}(v_{1}^{\beta}-v_{1}^{\ast\beta})(x',x_{d})|_{|x'|=\varepsilon^{\gamma}}\cdot(h_{1}-h_{2})(x')\notag\\
\leq& C(\varepsilon^{1-m\gamma}+\varepsilon^{m\gamma}).\label{LKT6.009}
\end{align}
Choose $\gamma=\frac{1}{2m}$. Combining with (\ref{LKT6.007})--(\ref{LKT6.009}), we get
$$|v_{1}^{\beta}-v_{1}^{\ast\beta}|\leq C\varepsilon^{\frac{1}{2}},\quad\;\,\mathrm{on}\;\,\partial\big(D\setminus\big(\overline{D_{1}\cup D_{1}^{\ast}\cup D_{2}\cup\mathcal{C}_{\varepsilon^{\frac{1}{2m}}}}\big)\big).$$
Then it follows from the maximum principle for the Lam\'{e} system in \cite{MMN2007} that
\begin{align}\label{AST123}
|v_{1}^{\beta}-v_{1}^{\ast\beta}|\leq C\varepsilon^{\frac{1}{2}},\quad\;\,\mathrm{in}\;\,D\setminus\big(\overline{D_{1}\cup D_{1}^{\ast}\cup D_{2}\cup\mathcal{C}_{\varepsilon^{\frac{1}{2m}}}}\big),
\end{align}
which, in combination with the standard boundary estimates, yields that
\begin{align*}
|\nabla(v_{1}^{\beta}-v_{1}^{\ast\beta})|\leq C\varepsilon^{\frac{1}{2}},\quad\mathrm{on}\;\,\partial D.
\end{align*}
Hence
\begin{align*}
|b_1^{\beta}-b_{1}^{\ast\beta}|\leq\left|\int_{\partial D}\frac{\partial(v_{1}^{\beta}-v_{1}^{\ast\beta})}{\partial \nu_0}\large\Big|_{+}\cdot\varphi\right|\leq C\|\varphi\|_{C^{0}(\partial D)}\varepsilon^{\frac{1}{2}}.
\end{align*}

\end{proof}

For $i,j=1,2$ and $\alpha,\beta=1,2,...,\frac{d(d+1)}{2}$, in view of the definition of $a_{ij}^{\alpha\beta}$, it follows from \eqref{Le2.01222} that
\begin{align*}
a_{ij}^{\alpha\beta}=\int_{\Omega}(\mathbb{C}^0e(v_{i}^{\alpha}), e(v_j^\beta))dx.
\end{align*}
For $i=0,2$, we define
\begin{align}\label{ZZW0a2}
\hat{\varepsilon}_{i}(d,m;\sigma)=&
\begin{cases}
\varepsilon^{\frac{\sigma}{m}},&m>d+i-1+\sigma,\\
\varepsilon^{\frac{\sigma}{m}}|\ln\varepsilon|,&m=d+i-1+\sigma,\\
\varepsilon^{1-\frac{d+i-1}{m}},&d+i-1<m<d+i-1+\sigma,\\
|\ln\varepsilon|^{-1},&m=d+i-1,\\
\varepsilon^{\min\{\frac{1}{6},\frac{d+i-1-m}{12m}\}},&m<d+i-1.
\end{cases}
\end{align}
\begin{lemma}\label{lemmabc}
Assume as above. Then, for a sufficiently small $\varepsilon>0$,

$(i)$ for $\alpha=1,2,...,d$, then
\begin{align}\label{LMC1}
a_{11}^{\alpha\alpha}=&
\begin{cases}
\mathcal{L}_{d}^{\alpha}\mathcal{M}_{0}\rho_{0}(d,m;\varepsilon)(1+O(\hat{\varepsilon}_{0}(d,m;\sigma))),&m\geq d-1,\\
a_{11}^{\ast\alpha\alpha}+O(\hat{\varepsilon}_{0}(d,m;\sigma)),&m<d-1;
\end{cases}
\end{align}

$(ii)$ for $\alpha=d+1,...,\frac{d(d+1)}{2}$, then
\begin{align}\label{LMC}
a_{11}^{\alpha\alpha}=&
\begin{cases}
\mathcal{L}_{d}^{\alpha}\mathcal{M}_{2}\rho_{2}(d,m;\varepsilon)(1+O(\hat{\varepsilon}_{2}(d,m;\sigma))),&m\geq d+1,\\
a_{11}^{\ast\alpha\alpha}+O(\hat{\varepsilon}_{2}(d,m;\sigma)),&m<d+1;
\end{cases}
\end{align}

$(iii)$ if $d=2$, for $\alpha,\beta=1,2,\alpha\neq\beta$, then
\begin{align}\label{LVZQ001}
a_{11}^{12}=a_{11}^{21}=O(1)|\ln\varepsilon|,
\end{align}
and if $d\geq3$, for $\alpha,\beta=1,2,...,d,\,\alpha\neq\beta$, then
\begin{align}\label{ADzc}
a_{11}^{\alpha\beta}=a_{11}^{\beta\alpha}=a_{11}^{\ast\alpha\beta}+O(1)\varepsilon^{\min\{\frac{1}{6},\frac{d-2}{12m}\}},
\end{align}
and if $d\geq2$, for $\alpha=1,2,...,d,\,\beta=d+1,...,\frac{d(d+1)}{2},$ then
\begin{align}\label{LVZQ0011gdw}
a_{11}^{\alpha\beta}=a_{11}^{\beta\alpha}=a_{11}^{\ast\alpha\beta}+O(1)\varepsilon^{\min\{\frac{1}{6},\frac{d-1}{12m}\}},
\end{align}
and if $d\geq3$, for $\alpha,\beta=d+1,...,\frac{d(d+1)}{2},\,\alpha\neq\beta$, then
\begin{align}\label{LVZQ0011}
a_{11}^{\alpha\beta}=a_{11}^{\beta\alpha}=a_{11}^{\ast\alpha\beta}+O(1)\varepsilon^{\min\{\frac{1}{6},\frac{d}{12m}\}};
\end{align}

$(iv)$ for $\alpha,\beta=1,2,...,\frac{d(d+1)}{2}$,
\begin{align}\label{AZQ001}
\sum\limits^{2}_{i=1}a_{i1}^{\alpha\beta}=&\sum\limits^{2}_{i=1}a_{i1}^{\ast\alpha\beta}+O(\varepsilon^{\frac{1}{4}}),\quad\sum\limits^{2}_{j=1}a_{1j}^{\alpha\beta}=\sum\limits^{2}_{j=1}a_{1j}^{*\alpha\beta}+O(\varepsilon^{\frac{1}{4}}),
\end{align}
and
\begin{align}\label{AZQ00111}
\sum\limits^{2}_{i,j=1}a_{ij}^{\alpha\beta}=\sum\limits^{2}_{i,j=1}a_{ij}^{\ast\alpha\beta}+O(\varepsilon^{\frac{1}{4}}),
\end{align}
where $\hat{\varepsilon}_{i}(d,m;\sigma)$, $i=0,2$ are defined by \eqref{ZZW0a2}.
\end{lemma}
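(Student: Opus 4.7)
The plan is to distinguish three kinds of entries: the diagonal energies $a_{11}^{\alpha\alpha}$, the off-diagonal entries $a_{11}^{\alpha\beta}$ with $\alpha\ne\beta$, and the column-sum quantities $\sum_i a_{i1}^{\alpha\beta}$, $\sum_j a_{1j}^{\alpha\beta}$, $\sum_{i,j}a_{ij}^{\alpha\beta}$. For each one I split the integration as $\Omega=\Omega_R\cup(\Omega\setminus\Omega_R)$. On the bulk piece $\Omega\setminus\Omega_R$ standard elliptic regularity gives $C^1$-bounds that are stable as $\varepsilon\to 0$, so this piece is essentially the $\ast$-counterpart plus $O(\varepsilon)$; the content is in the analysis on $\Omega_R$.

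For $a_{11}^{\alpha\alpha}$ I apply Corollary \ref{thm86} to replace $\nabla v_1^\alpha$ by $\nabla\bar u_1^\alpha$ inside $\Omega_R$, at the cost of an error of the order $\delta^{(m-2)/m}$ (for translation indices) or $1$ (for rotation indices). The key is to expand $(\mathbb{C}^0 e(\bar u_1^\alpha),e(\bar u_1^\alpha))$ using the explicit form \eqref{zzwz002}--\eqref{QLA001}. The dominant term is $\partial_{x_d}\bar u_1^\alpha\sim \psi_\alpha\delta^{-1}$, and the role of the correction $\mathcal F_\alpha$ is precisely to kill the cross terms that would otherwise produce spurious $\delta^{-1}$ contributions in the horizontal components of the elastic energy; what survives is a clean principal term
\[
(\mathbb{C}^0 e(\bar u_1^\alpha),e(\bar u_1^\alpha))=\frac{\mathcal{L}_d^\alpha|\psi_\alpha|^2}{\delta^2}+\text{lower order}.
\]
Integrating $x_d$ across the gap gives $\mathcal{L}_d^\alpha|\psi_\alpha|^2/\delta$, and then the $x'$-integration via the substitution $x'=\tau^{-1/m}\varepsilon^{1/m} y'$ converts the resulting radial integral into the Gamma-function formula \eqref{WEN}. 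This produces $\mathcal{L}_d^\alpha \mathcal M_0\rho_0$ when $\psi_\alpha$ is a translation (so $|\psi_\alpha|\sim 1$, $i=0$) and $\mathcal{L}_d^\alpha \mathcal M_2\rho_2$ when $\psi_\alpha$ is a rotation (so $|\psi_\alpha|\sim|x'|$, $i=2$). The error bookkeeping $\hat\varepsilon_i$ follows by cutting at $|x'|\sim\varepsilon^{1/m}$ and comparing the $h_1-h_2=\tau|x'|^m$ profile to the true one via (\textbf{H1}), giving the $\varepsilon^{\sigma/m}$ alternative, and the logarithmic case $|\ln\varepsilon|^{-1}$ at the critical exponents. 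When $m<d+i-1$, the singular integral converges absolutely on $\Omega^\ast$, and the bound on $\nabla(v_1^\alpha-v_1^{\ast\alpha})$ used in Lemma \ref{KM323} furnishes $a_{11}^{\alpha\alpha}=a_{11}^{\ast\alpha\alpha}+O(\hat\varepsilon_i)$.

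For the off-diagonal entries, the key observation is orthogonality of $\{\psi_\alpha\}$: either two distinct $\psi_\alpha,\psi_\beta$ carry different standard-basis components of $\mathbb{R}^d$, or one of them is rotational and contributes an extra factor of $|x'|$ inside the integrand. Either way the would-be leading coefficient $\int\mathcal{L}_d^\alpha\psi_\alpha\cdot\psi_\beta\,\delta^{-2}dx$ either vanishes by symmetry (using (\textbf{H2}) and the assumed evenness of $h_1-h_2$) or is of strictly lower order. Bounding what remains by Cauchy–Schwarz against the diagonal estimate yields the logarithmic bound \eqref{LVZQ001} in two dimensions and the convergence to $a_{11}^{\ast\alpha\beta}$ in the remaining cases. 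The explicit error exponents $\varepsilon^{\min\{1/6,\,\cdot/(12m)\}}$ are obtained by the same cutoff-and-maximum-principle argument as in the proof of Lemma \ref{KM323}: split at $|x'|=\varepsilon^\gamma$ with $\gamma$ optimized against the interior pointwise gradient bound and the $L^\infty$ bound \eqref{AST123} on $v_1^\alpha-v_1^{\ast\alpha}$.

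For the column sums the argument is much softer: Corollary \ref{coro00z} says $\sum_i\nabla v_i^\alpha$ is exponentially small throughout $\Omega_R$, so the thin-gap contribution to $\sum_i a_{i1}^{\alpha\beta}$ and $\sum_{i,j}a_{ij}^{\alpha\beta}$ is $o(\varepsilon^N)$ for every $N$. The remaining bulk integral agrees with its $\ast$-analogue up to a layer of width $\varepsilon^{1/(2m)}$ around the touching point; controlling this layer by the same cutoff argument as in Lemma \ref{KM323} produces the $O(\varepsilon^{1/4})$ error in \eqref{AZQ001}--\eqref{AZQ00111}. The main obstacle I expect is the bookkeeping for the diagonal case: one must track exactly how $\mathcal F_\alpha$ cancels the subleading $\delta^{-2+2/m}$ contributions, and one must handle the critical exponents $m=d+i-1$ where the radial integral produces $|\ln\varepsilon|$ and the $\Gamma$-factor in \eqref{WEN} collapses to $1$, so that the precise constants $\mathcal M_0$ and $\mathcal M_2$ come out correctly rather than being off by a dimensional factor.
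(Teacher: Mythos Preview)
Your plan is essentially the paper's, but you blur one structural point that matters for the error bookkeeping. The paper does \emph{not} split simply as $\Omega_R\cup(\Omega\setminus\Omega_R)$; it uses a three-region decomposition
\[
a_{11}^{\alpha\alpha}=\int_{\Omega\setminus\Omega_R}+\int_{\Omega_{\varepsilon^{\bar\gamma}}}+\int_{\Omega_R\setminus\Omega_{\varepsilon^{\bar\gamma}}},\qquad \bar\gamma=\tfrac{1}{12m}.
\]
On the innermost annulus $\Omega_{\varepsilon^{\bar\gamma}}$ the auxiliary function $\bar u_1^\alpha$ is used, exactly as you describe. On the other two pieces the comparison is to $v_1^{\ast\alpha}$, and the crucial input is a \emph{gradient} bound
\[
|\nabla(v_1^\alpha-v_1^{\ast\alpha})|\le C\varepsilon^{1/6}\quad\text{on } D\setminus\overline{D_1\cup D_1^\ast\cup D_2\cup\mathcal C_{\varepsilon^{\bar\gamma}}},
\]
obtained by rescaling $\Omega_{|z'|+|z'|^m}\setminus\Omega_{|z'|}$ to unit size, using interior/boundary elliptic estimates to get $|\nabla^2|\le C$, and interpolating with the $L^\infty$ bound \eqref{AST123} from Lemma~\ref{KM323}. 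That interpolation is precisely where the exponent $\tfrac16$ and the factor $\tfrac{1}{12m}$ in $\hat\varepsilon_i$ come from; your cutoff ``at $|x'|\sim\varepsilon^{1/m}$'' is the matching scale for the main term, but it is \emph{not} the scale that optimizes the $\ast$-comparison, and Lemma~\ref{KM323} itself gives only an $L^\infty$ bound, not the gradient bound you invoke. For the non-blow-up cases ($m<d+i-1$) this step is not optional: without it you cannot pass from $\int_{\Omega}(\mathbb C^0 e(v_1^\alpha),e(v_1^\alpha))$ to $a_{11}^{\ast\alpha\alpha}$ with any explicit rate.

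Two minor points. First, the paper does not prove (iii)--(iv) here; it cites \cite{MZ2021} for those and only carries out (i)--(ii) in detail. Your sketches for (iii)--(iv) are in the right spirit (parity cancellation for the leading integrand, exponential smallness of $\sum_i\nabla v_i^\alpha$ for the column sums), but the specific exponents again come from the $\varepsilon^{1/(12m)}$ cutoff and the $\varepsilon^{1/6}$ gradient bound. Second, in the middle region $\Omega_R\setminus\Omega_{\varepsilon^{\bar\gamma}}$ the paper further replaces $v_1^{\ast\alpha}$ by $\bar u_1^{\ast\alpha}$ and collects the remainders into an $\varepsilon$-independent constant $M_d^{\ast\alpha}$; this is how one sees cleanly that in the blow-up regime the $O(1)$ remainder is absorbed into the relative error $O(\hat\varepsilon_i)$ and in the bounded regime it reconstitutes $a_{11}^{\ast\alpha\alpha}$.
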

%\begin{remark}
%As shown in Lemma \ref{lemmabc}, we give a precise computation for every element of the coefficient matrix in \eqref{PLA001}, which ensures that the blow-up factor matrices can be accurately captured.
%\end{remark}

\begin{proof}
Since the proofs of \eqref{LVZQ001}--\eqref{AZQ00111} are entirely contained in Lemma 2.3 of \cite{MZ2021}, it is sufficient to give a precise computation for the principal diagonal elements as in \eqref{LMC1}--\eqref{LMC}.

{\bf Step 1. Proof of (\ref{LMC1})}. Pick $\bar{\gamma}=\frac{1}{12m}$. For $\alpha=1,2,...,d$, we first decompose $a_{11}^{\alpha\alpha}$ into three parts as follows:
\begin{align}\label{a1111}
a_{11}^{\alpha\alpha}=&\int_{\Omega\setminus\Omega_{R}}(\mathbb{C}^{0}e(v_{1}^{\alpha}),e(v_{1}^{\alpha}))+\int_{\Omega_{\varepsilon^{\bar{\gamma}}}}(\mathbb{C}^{0}e(v_{1}^{\alpha}),e(v_{1}^{\alpha}))+\int_{\Omega_{R}\setminus\Omega_{\varepsilon^{\bar{\gamma}}}}(\mathbb{C}^{0}e(v_{1}^{\alpha}),e(v_{1}^{\alpha}))\nonumber\\
=&:\mathrm{I}_{1}+\mathrm{II}_{1}+\mathrm{III}_{1}.
\end{align}
For $\varepsilon^{\bar{\gamma}}\leq|z'|\leq R$, we utilize a change of variable
\begin{align*}
\begin{cases}
x'-z'=|z'|^{m}y',\\
x_{d}=|z'|^{m}y_{d},
\end{cases}
\end{align*}
to rescale $\Omega_{|z'|+|z'|^{m}}\setminus\Omega_{|z'|}$ and $\Omega_{|z'|+|z'|^{m}}^{\ast}\setminus\Omega_{|z'|}^{\ast}$ into two nearly unit-size squares (or cylinders) $Q_{1}$ and $Q_{1}^{\ast}$, respectively. Let
\begin{align*}
V_{1}^{\alpha}(y)=v_{1}^{\alpha}(z'+|z'|^{m}y',|z'|^{m}y_{d}),\quad\mathrm{in}\;Q_{1},
\end{align*}
and
\begin{align*}
V_{1}^{\ast\alpha}(y)=v_{1}^{\ast\alpha}(z'+|z'|^{m}y',|z'|^{m}y_{d}),\quad\mathrm{in}\;Q_{1}^{\ast}.
\end{align*}
Due to the fact that $0<V_{1}^{\alpha},V_{1}^{\ast\alpha}<1$, it follows from the standard elliptic estimate that
\begin{align*}
|\nabla^{2}V_{1}^{\alpha}|\leq C,\quad\mathrm{in}\;Q_{1},\quad\mathrm{and}\;|\nabla^{2}V_{1}^{\ast\alpha}|\leq C,\quad\mathrm{in}\;Q_{1}^{\ast}.
\end{align*}
Applying an interpolation with \eqref{AST123}, we obtain
\begin{align*}
|\nabla(V_{1}^{\alpha}-V_{1}^{\ast\alpha})|\leq C\varepsilon^{\frac{1}{2}(1-\frac{1}{2})}\leq C\varepsilon^{\frac{1}{4}}.
\end{align*}
Then rescaling it back to $v_{1}^{\alpha}-v_{1}^{\ast\alpha}$ and in view of $\varepsilon^{\bar{\gamma}}\leq|z'|\leq R$, we have
\begin{align*}
|\nabla(v_{1}^{\alpha}-v_{1}^{\ast\alpha})(x)|\leq C\varepsilon^{\frac{1}{4}}|z'|^{-m}\leq C\varepsilon^{\frac{1}{6}},\quad x\in\Omega^{\ast}_{|z'|+|z'|^{m}}\setminus\Omega_{|z'|}^{\ast}.
\end{align*}
That is, for $\alpha=1,2,...,d$,
\begin{align}\label{con035}
|\nabla(v_{1}^{\alpha}-v_{1}^{\ast\alpha})|\leq C\varepsilon^{\frac{1}{6}},\quad\;\,\mathrm{in}\;\,D\setminus\big(\overline{D_{1}\cup D_{1}^{\ast}\cup D_{2}\cup\mathcal{C}_{\varepsilon^{\bar{\gamma}}}}\big).
\end{align}

For the first term $\mathrm{I}_{1}$ in \eqref{a1111}, since $|\nabla v_{1}^{\alpha}|$ is bounded in $(D_{1}^{\ast}\setminus(D_{1}\cup\Omega_{R}))\cup(D_{1}\setminus D_{1}^{\ast})$ and the volume of $D_{1}^{\ast}\setminus(D_{1}\cup\Omega_{R})$ and $D_{1}\setminus D_{1}^{\ast}$ is of order $O(\varepsilon)$, we deduce from (\ref{con035}) that
\begin{align}\label{KKAA123}
\mathrm{I}_{1}=&\int_{D\setminus(D_{1}\cup D_{1}^{\ast}\cup D_{2}\cup\Omega_{R})}(\mathbb{C}^{0}e(v_{1}^{\alpha}),e(v_{1}^{\alpha}))+O(1)\varepsilon\notag\\
=&\int_{D\setminus(D_{1}\cup D_{1}^{\ast}\cup D_{2}\cup\Omega_{R})}\left((\mathbb{C}^{0}e(v_{1}^{\ast\alpha}),e(v_{1}^{\ast\alpha}))+2(\mathbb{C}^{0}e(v_{1}^{\alpha}-v_{1}^{\ast\alpha}),e(v_{1}^{\ast\alpha}))\right)\notag\\
&+\int_{D\setminus(D_{1}\cup D_{1}^{\ast}\cup D_{2}\cup\Omega_{R})}(\mathbb{C}^{0}e(v_{1}^{\alpha}-v_{1}^{\ast\alpha}),e(v_{1}^{\alpha}-v_{1}^{\ast\alpha}))+O(1)\varepsilon\notag\\
=&\int_{\Omega^{\ast}\setminus\Omega^{\ast}_{R}}(\mathbb{C}^{0}e(v_{1}^{\ast\alpha}),e(v_{1}^{\ast\alpha}))+O(1)\varepsilon^{\frac{1}{6}}.
\end{align}

In view of the definitions of $\bar{u}_{1}^{\alpha}$ and $\mathbb{C}^{0}$, it follows from a direct calculation that for $\alpha=1,2,...,d$,
\begin{align*}
(\mathbb{C}^{0}e(\bar{u}_{1}^{\alpha}),e(\bar{u}_{1}^{\alpha}))=&2(\mathbb{C}^{0}e(\psi_{\alpha}\bar{v}),e(\mathcal{F}_{\alpha}))+(\mathbb{C}^{0}e(\mathcal{F}_{\alpha}),e(\mathcal{F}_{\alpha}))\\
&+(\lambda+\mu)(\partial_{x_{\alpha}}\bar{v})^{2}+\mu\sum\limits^{d}_{i=1}(\partial_{x_{i}}\bar{v})^{2},
\end{align*}
where the correction term $\mathcal{F}_{\alpha}$ is defined by \eqref{QLA001}. This, together with Corollary \ref{thm86}, yields that
\begin{align}\label{con03365}
\mathrm{II}_{1}=&\int_{\Omega_{\varepsilon^{\bar{\gamma}}}}(\mathbb{C}^{0}e(\bar{u}_{1}^{\alpha}),e(\bar{u}_{1}^{\alpha}))+2\int_{\Omega_{\varepsilon^{\bar{\gamma}}}}(\mathbb{C}^{0}e(v_{1}^{\alpha}-\bar{u}_{1}^{\alpha}),e(\bar{u}_{1}^{\alpha}))\notag\\
&+\int_{\Omega_{\varepsilon^{\bar{\gamma}}}}(\mathbb{C}^{0}e(v_{1}^{\alpha}-\bar{u}_{1}^{\alpha}),e(v_{1}^{\alpha}-\bar{u}_{1}^{\alpha}))\notag\\
=&\,\mathcal{L}_{d}^{\alpha}\int_{|x'|<\varepsilon^{\bar{\gamma}}}\frac{dx'}{\varepsilon+h_{1}(x')-h_{2}(x')}+O(1)\varepsilon^{\frac{d-1}{12m}},
\end{align}
where $\mathcal{L}_{d}^{\alpha}$ is defined in \eqref{AZ}--\eqref{AZ110}.

For the last term $\mathrm{III}$ in \eqref{a1111}, we further split it into three parts as follows:
\begin{align*}
\mathrm{III}^{1}_{1}=&\int_{(\Omega_{R}\setminus\Omega_{\varepsilon^{\bar{\gamma}}})\setminus(\Omega^{\ast}_{R}\setminus\Omega^{\ast}_{\varepsilon^{\bar{\gamma}}})}(\mathbb{C}^{0}e(v_{1}^{\alpha}),e(v_{1}^{\alpha})),\\
\mathrm{III}_{1}^{2}=&\int_{\Omega^{\ast}_{R}\setminus\Omega^{\ast}_{\varepsilon^{\bar{\gamma}}}}(\mathbb{C}^{0}e(v_{1}^{\alpha}-v_{1}^{\ast\alpha}),e(v_{1}^{\alpha}-v_{1}^{\ast\alpha}))+2\int_{\Omega^{\ast}_{R}\setminus\Omega^{\ast}_{\varepsilon^{\bar{\gamma}}}}(\mathbb{C}^{0}e(v_{1}^{\alpha}-v_{1}^{\ast\alpha}),e(v_{1}^{\ast\alpha})),\\
\mathrm{III}_{1}^{3}=&\int_{\Omega^{\ast}_{R}\setminus\Omega^{\ast}_{\varepsilon^{\bar{\gamma}}}}(\mathbb{C}^{0}e(v_{1}^{\ast\alpha}),e(v_{1}^{\ast\alpha})).
\end{align*}
Based on the fact that the thickness of $(\Omega_{R}\setminus\Omega_{\varepsilon^{\bar{\gamma}}})\setminus(\Omega^{\ast}_{R}\setminus\Omega^{\ast}_{\varepsilon^{\bar{\gamma}}})$ is $\varepsilon$, it follows from (\ref{Le2.025}) that
\begin{align}\label{con0333355}
|\mathrm{III}_{1}^{1}|\leq\,C\varepsilon\int_{\varepsilon^{\bar{\gamma}}<|x'|<R}\frac{dx'}{|x'|^{2m}}\leq& C
\begin{cases}
\varepsilon,&m<\frac{d-1}{2},\\
\varepsilon|\ln\varepsilon|,&m=\frac{d-1}{2},\\
\varepsilon^{\frac{10m+d-1}{12m}},&m>\frac{d-1}{2}.
\end{cases}
\end{align}
From (\ref{LKT6.005}) and (\ref{con035}), we get
\begin{align}\label{con036666}
|\mathrm{III}_{1}^{2}|\leq\,C\varepsilon^{\frac{1}{6}}.
\end{align}

As for $\mathrm{III}_{1}^{3}$, it follows from (\ref{LKT6.005}) again that
\begin{align*}
\mathrm{III}_{1}^{3}=&\int_{\Omega_{R}^{\ast}\setminus\Omega^{\ast}_{\varepsilon^{\bar{\gamma}}}}(\mathbb{C}^{0}e(\bar{u}_{1}^{\ast\alpha}),e(\bar{u}_{1}^{\ast\alpha}))+2\int_{\Omega_{R}^{\ast}\setminus\Omega^{\ast}_{\varepsilon^{\bar{\gamma}}}}(\mathbb{C}^{0}e(v_{1}^{\ast\alpha}-\bar{u}_{1}^{\ast\alpha}),e(\bar{u}_{1}^{\ast\alpha}))\notag\\
&+\int_{\Omega_{R}^{\ast}\setminus\Omega^{\ast}_{\varepsilon^{\bar{\gamma}}}}(\mathbb{C}^{0}e(v_{1}^{\ast\alpha}-\bar{u}_{1}^{\ast\alpha}),e(v_{1}^{\ast\alpha}-\bar{u}_{1}^{\ast\alpha}))\notag\\
=&\,\mathcal{L}_{d}^{\alpha}\int_{\varepsilon^{\bar{\gamma}}<|x'|<R}\frac{dx'}{h_{1}-h_{2}}+M_{d}^{\ast\alpha}-\int_{\Omega^{\ast}\setminus\Omega^{\ast}_{R}}(\mathbb{C}^{0}e(v_{1}^{\ast\alpha}),e(v_{1}^{\ast\alpha}))+O(1)\varepsilon^{\min\{\frac{1}{6},\frac{d-1}{12 m}\}},
\end{align*}
where
\begin{align}\label{LLLM001}
M_{d}^{\ast\alpha}=&\int_{\Omega^{\ast}\setminus\Omega^{\ast}_{R}}(\mathbb{C}^{0}e(\bar{u}_{1}^{\ast\alpha}),e(\bar{u}_{1}^{\ast\alpha}))+\int_{\Omega_{R}^{\ast}}(\mathbb{C}^{0}e(v_{1}^{\ast\alpha}-\bar{u}_{1}^{\ast\alpha}),e(v_{1}^{\ast\alpha}-\bar{u}_{1}^{\ast\alpha}))\notag\\
&+\int_{\Omega_{R}^{\ast}}\big[2(\mathbb{C}^{0}e(v_{1}^{\ast\alpha}-\bar{u}_{1}^{\ast\alpha}),e(\bar{u}_{1}^{\ast\alpha}))+2(\mathbb{C}^{0}e(\psi_{\alpha}\bar{v}^{\ast}),e(\mathcal{F}_{\alpha}^{\ast}))+(\mathbb{C}^{0}e(\mathcal{F}_{\alpha}^{\ast}),e(\mathcal{F}_{\alpha}^{\ast}))\big]\notag\\
&+
\begin{cases}
\int_{\Omega_{R}^{\ast}}(\lambda+\mu)(\partial_{x_{\alpha}}\bar{v}^{\ast})^{2}+\mu\sum\limits^{d-1}_{i=1}(\partial_{x_{i}}\bar{v}^{\ast})^{2},&\alpha=1,...,d-1,\\
\int_{\Omega_{R}^{\ast}}\mu\sum\limits^{d-1}_{i=1}(\partial_{x_{i}}\bar{v}^{\ast})^{2},&\alpha=d.
\end{cases}
\end{align}
This, in combination with (\ref{KKAA123})--\eqref{con036666}, yields that
\begin{align}\label{ZPH001}
a_{11}^{\alpha\alpha}=&\mathcal{L}_{d}^{\alpha}\left(\int_{\varepsilon^{\bar{\gamma}}<|x'|<R}\frac{dx'}{h_{1}(x')-h_{2}(x')}+\int_{|x'|<\varepsilon^{\bar{\gamma}}}\frac{dx'}{\varepsilon+h_{1}(x')-h_{2}(x')}\right)\nonumber\\
&+M_{d}^{\ast\alpha}+O(1)\varepsilon^{\min\{\frac{1}{6},\frac{d-1}{12m}\}}.
\end{align}
On one hand, for $m\geq d-1$,
\begin{align}\label{LNQ001}
&\int_{|x'|<R}\frac{dx'}{\varepsilon+h_{1}-h_{2}}+\int_{\varepsilon^{\bar{\gamma}}<|x'|<R}\frac{\varepsilon\,dx'}{(h_{1}-h_{2})(\varepsilon+h_{1}-h_{2})}\notag\\
=&\int_{|x'|<R}\frac{1}{\varepsilon+\tau|x'|^{m}}+\int_{|x'|<R}\left(\frac{1}{\varepsilon+h_{1}-h_{2}}-\frac{1}{\varepsilon+\tau|x'|^{m}}\right)+O(1)\varepsilon^{\frac{10m+d-1}{12m}}\notag\\
=&(d-1)\omega_{d-1}\int_{0}^{R}\frac{s^{d-2}}{\varepsilon+\tau s^{m}}+O(1)\int_{0}^{R}\frac{s^{d+\beta-2}}{\varepsilon+\tau s^{m}}\notag\\
=&\mathcal{M}_{0}\rho_{0}(d,m;\varepsilon)(1+O(\hat{\varepsilon}_{0}(d,m;\sigma))).
\end{align}
On the other hand, for $m<d-1$,
\begin{align}\label{LNQ002}
&\int_{|x'|<R}\frac{dx'}{h_{1}-h_{2}}-\int_{|x'|<\varepsilon^{\bar{\gamma}}}\frac{\varepsilon\,dx'}{(h_{1}-h_{2})(\varepsilon+h_{1}-h_{2})}\notag\\
=&\int_{\Omega_{R}^{\ast}}|\partial_{x_{d}}\bar{u}^{\ast\alpha}_{1}|^{2}+O(1)\varepsilon^{\min\{\frac{1}{6},\frac{d-1-m}{12m}\}}.
\end{align}
Consequently, combining with \eqref{ZPH001}--\eqref{LNQ002}, we deduce that (\ref{LMC1}) holds.
%\begin{align*}
%a_{11}^{\alpha\alpha}=&\mathcal{L}_{d}^{\alpha}\mathcal{M}_{0}\rho_{0}(d,m;\varepsilon)(1+O(\hat{\varepsilon}_{0}(d,m,\beta))).
%\end{align*}

{\bf Step 2. Proof of (\ref{LMC})}. Observe that for every $d+1\leq\alpha\leq\frac{d(d+1)}{2}$, there exist two indices $1\leq i_{\alpha}<j_{\alpha}\leq d$ such that
$\psi_{\alpha}=(0,...,0,x_{j_{\alpha}},0,...,0,-x_{i_{\alpha}},0,...,0)$. In particular, if $d+1\leq\alpha\leq2d-1$, we have $i_{\alpha}=\alpha-d,\,j_{\alpha}=d$ and then $\psi_{\alpha}=(0,...,0,x_{d},0,...,0,-x_{\alpha-d})$. Similarly as in \eqref{a1111}, for $\alpha=d+1,...,\frac{d(d+1)}{2}$, we decompose $a_{11}^{\alpha\alpha}$ into three parts as follows:
\begin{align*}
\mathrm{I}_{2}=&\int_{\Omega\setminus\Omega_{R}}(\mathbb{C}^{0}e(v_{1}^{\alpha}),e(v_{1}^{\alpha})),\\
\mathrm{II}_{2}=&\int_{\Omega_{\varepsilon^{\bar{\gamma}}}}(\mathbb{C}^{0}e(v_{1}^{\alpha}),e(v_{1}^{\alpha})),\\
\mathrm{III}_{2}=&\int_{\Omega_{R}\setminus\Omega_{\varepsilon^{\bar{\gamma}}}}(\mathbb{C}^{0}e(v_{1}^{\alpha}),e(v_{1}^{\alpha})),
\end{align*}
where $\bar{\gamma}=\frac{1}{12m}$. Similarly as before, it follows from \eqref{AST123}, the rescale argument, the interpolation inequality and the standard elliptic estimates that for $\alpha=d+1,...,\frac{d(d+1)}{2}$,
\begin{align}\label{OPKT001}
|\nabla(v_{1}^{\alpha}-v_{1}^{\ast\alpha})|\leq C\varepsilon^{\frac{1}{6}},\quad\;\,\mathrm{in}\;\,D\setminus\big(\overline{D_{1}\cup D_{1}^{\ast}\cup D_{2}\cup\mathcal{C}_{\varepsilon^{\bar{\gamma}}}}\big).
\end{align}
By the same argument as in \eqref{KKAA123}, it follows from \eqref{OPKT001} that
\begin{align}\label{KHT01}
\mathrm{I}_{2}=&\int_{\Omega^{\ast}\setminus\Omega^{\ast}_{R}}(\mathbb{C}^{0}e(v_{1}^{\ast\alpha}),e(v_{1}^{\ast\alpha}))+O(1)\varepsilon^{\frac{1}{6}}.
\end{align}
For $\alpha=d+1,...,\frac{d(d+1)}{2}$, a straightforward computation gives that
\begin{align*}
(\mathbb{C}^{0}e(\bar{u}_{1}^{\alpha}),e(\bar{u}_{1}^{\alpha}))=&\mu(x_{i_{\alpha}}^{2}+x_{j_{\alpha}}^{2})\sum^{d}_{k=1}(\partial_{x_{k}}\bar{v})^{2}+(\lambda+\mu)(x_{j_{\alpha}}\partial_{x_{i_{\alpha}}}\bar{v}-x_{i_{\alpha}}\partial_{x_{j_{\alpha}}}\bar{v})^{2}\\
&+2(\mathbb{C}^{0}e(\psi_{\alpha}\bar{v}),e(\mathcal{F}_{\alpha}))+(\mathbb{C}^{0}e(\mathcal{F}_{\alpha}),e(\mathcal{F}_{\alpha})),
\end{align*}
which, in combination with Theorem \ref{thm86}, reads that
\begin{align}\label{coJKL5}
\mathrm{II}_{2}
=&\frac{\mathcal{L}_{d}^{\alpha}}{d-1}\int_{|x'|<\varepsilon^{\bar{\gamma}}}\frac{|x'|^{2}}{\varepsilon+h_{1}(x')-h_{2}(x')}\,dx'+O(1)\varepsilon^{\frac{d-1}{12m}},
\end{align}
where the correction term $\mathcal{F}_{\alpha}$ is defined by \eqref{QLA001} and $\mathcal{L}_{d}^{\alpha}$ is defined in (\ref{AZ})--(\ref{AZ110}).

For the third term $\mathrm{III}_{2}$, similarly as above, we further decompose it as follows:
\begin{align*}
\mathrm{III}_{2}=&\int_{(\Omega_{R}\setminus\Omega_{\varepsilon^{\bar{\gamma}}})\setminus(\Omega^{\ast}_{R}\setminus\Omega^{\ast}_{\varepsilon^{\bar{\gamma}}})}(\mathbb{C}^{0}e(v_{1}^{\alpha}),e(v_{1}^{\alpha}))+\int_{\Omega^{\ast}_{R}\setminus\Omega^{\ast}_{\varepsilon^{\bar{\gamma}}}}(\mathbb{C}^{0}e(v_{1}^{\alpha}-v_{1}^{\ast\alpha}),e(v_{1}^{\alpha}-v_{1}^{\ast\alpha}))\notag\\
&+2\int_{\Omega^{\ast}_{R}\setminus\Omega^{\ast}_{\varepsilon^{\bar{\gamma}}}}(\mathbb{C}^{0}e(v_{1}^{\alpha}-v_{1}^{\ast\alpha}),e(v_{1}^{\ast\alpha}))+\int_{\Omega_{R}^{\ast}\setminus\Omega^{\ast}_{\varepsilon^{\bar{\gamma}}}}(\mathbb{C}^{0}e(\bar{u}_{1}^{\ast\alpha}),e(\bar{u}_{1}^{\ast\alpha}))\notag\\
&+2\int_{\Omega_{R}^{\ast}\setminus\Omega^{\ast}_{\varepsilon^{\bar{\gamma}}}}(\mathbb{C}^{0}e(v_{1}^{\ast\alpha}-\bar{u}_{1}^{\ast\alpha}),e(\bar{u}_{1}^{\ast\alpha}))+\int_{\Omega_{R}^{\ast}\setminus\Omega^{\ast}_{\varepsilon^{\bar{\gamma}}}}(\mathbb{C}^{0}e(v_{1}^{\ast\alpha}-\bar{u}_{1}^{\ast\alpha}),e(v_{1}^{\ast\alpha}-\bar{u}_{1}^{\ast\alpha})).
\end{align*}
Due to the fact that the thickness of $(\Omega_{R}\setminus\Omega_{\varepsilon^{\bar{\gamma}}})\setminus(\Omega^{\ast}_{R}\setminus\Omega^{\ast}_{\varepsilon^{\bar{\gamma}}})$ is $\varepsilon$, it follows from \eqref{LKT6.005} and \eqref{OPKT001} that
\begin{align}\label{QPQIJKL01}
\mathrm{III}_{2}=&\,\frac{\mathcal{L}_{d}^{\alpha}}{d-1}\int_{\varepsilon^{\bar{\gamma}}<|x'|<R}\frac{|x'|^{2}}{h_{1}(x')-h_{2}(x')}dx'-\int_{\Omega^{\ast}\setminus\Omega^{\ast}_{R}}(\mathbb{C}^{0}e(v_{1}^{\ast\alpha}),e(v_{1}^{\ast\alpha}))\notag\\
&+M_{d}^{\ast\alpha}+O(1)\varepsilon^{\min\{\frac{1}{6},\frac{d-1}{12 m}\}},
\end{align}
where
\begin{align*}
M_{d}^{\ast\alpha}=&\int_{\Omega^{\ast}\setminus\Omega^{\ast}_{R}}(\mathbb{C}^{0}e(\bar{u}_{1}^{\ast\alpha}),e(\bar{u}_{1}^{\ast\alpha}))+\int_{\Omega_{R}^{\ast}}(\mathbb{C}^{0}e(v_{1}^{\ast\alpha}-\bar{u}_{1}^{\ast\alpha}),e(v_{1}^{\ast\alpha}-\bar{u}_{1}^{\ast\alpha}))\notag\\
&+\int_{\Omega_{R}^{\ast}}\big[2(\mathbb{C}^{0}e(v_{1}^{\ast\alpha}-\bar{u}_{1}^{\ast\alpha}),e(\bar{u}_{1}^{\ast\alpha}))+2(\mathbb{C}^{0}e(\psi_{\alpha}\bar{v}^{\ast}),e(\mathcal{F}_{\alpha}^{\ast}))+(\mathbb{C}^{0}e(\mathcal{F}_{\alpha}^{\ast}),e(\mathcal{F}_{\alpha}^{\ast}))\big]\\
&+
\begin{cases}
\int_{\Omega_{R}^{\ast}}\Big[\mu(x_{\alpha-d}^{2}+x_{d}^{2})\sum\limits^{d-1}_{k=1}(\partial_{x_{k}}\bar{v}^{\ast})^{2}+\mu(x_{d}\partial_{x_{d}}\bar{v}^{\ast})^{2}+(\lambda+\mu)(x_{d}\partial_{x_{\alpha-d}}\bar{v}^{\ast})^{2}\notag\\
\quad\quad\;-2(\lambda+\mu)x_{\alpha-d}x_{d}\partial_{x_{\alpha-d}}\bar{v}^{\ast}\partial_{x_{d}}\bar{v}^{\ast}\Big],\;\quad\alpha=d+1,...,2d-1,\\
\int_{\Omega_{R}^{\ast}}\Big[\mu(x_{i_{\alpha}}^{2}+x_{j_{\alpha}}^{2})\sum\limits^{d-1}_{k=1}(\partial_{x_{k}}\bar{v}^{\ast})^{2}\notag\\
\quad\quad\;+(\lambda+\mu)(x_{j_{\alpha}}\partial_{x_{i_{\alpha}}}\bar{v}^{\ast}-x_{i_{\alpha}}\partial_{x_{j_{\alpha}}}\bar{v}^{\ast})^{2}\Big],\;\quad\alpha=2d,...,\frac{d(d+1)}{2},\,d\geq3.
\end{cases}
\end{align*}
Then combining \eqref{KHT01}--\eqref{QPQIJKL01}, we obtain that

$(i)$ if $m\geq d+1$, then for $\alpha=d+1,...,\frac{d(d+1)}{2}$, we have
\begin{align}\label{QZH001}
a_{11}^{\alpha\alpha}=&\frac{\mathcal{L}_{d}^{\alpha}}{d-1}\left(\int_{\varepsilon^{\bar{\gamma}}<|x'|<R}\frac{|x'|^{2}}{h_{1}(x')-h_{2}(x')}+\int_{|x'|<\varepsilon^{\bar{\gamma}}}\frac{|x'|^{2}}{\varepsilon+h_{1}(x')-h_{2}(x')}\right)\nonumber\\
&+M_{d}^{\ast\alpha}+O(1)\varepsilon^{\min\{\frac{1}{6},\frac{d-1}{12 m}\}}.
\end{align}
Since
\begin{align*}
&\int_{\varepsilon^{\bar{\gamma}}<|x'|<R}\frac{|x'|^{2}}{h_{1}-h_{2}}+\int_{|x'|<\varepsilon^{\bar{\gamma}}}\frac{|x'|^{2}}{\varepsilon+h_{1}-h_{2}}\\
=&\int_{|x'|<R}\frac{|x'|^{2}}{\varepsilon+\tau|x'|^{m}}+\int_{|x'|<R}\left(\frac{|x'|^{2}}{\varepsilon+h_{1}-h_{2}}-\frac{1}{\varepsilon+\tau|x'|^{m}}\right)\\
&+\int_{\varepsilon^{\bar{\gamma}}<|x'|<R}\frac{\varepsilon|x'|^{2}}{(h_{1}-h_{2})(\varepsilon+h_{1}-h_{2})}\\
=&(d-1)\omega_{d-1}\int_{0}^{R}\frac{s^{d}}{\varepsilon+\tau s^{m}}+O(1)\int_{0}^{R}\frac{s^{d+\beta}}{\varepsilon+\tau s^{m}}\notag\\
=&(d-1)\mathcal{M}_{2}\rho_{2}(d,m;\varepsilon)(1+O(\hat{\varepsilon}_{2}(d,m;\sigma))),
\end{align*}
then
\begin{align*}
a_{11}^{\alpha\alpha}=&\mathcal{L}_{d}^{\alpha}\mathcal{M}_{2}\rho_{2}(d,m;\varepsilon)(1+O(\hat{\varepsilon}_{2}(d,m;\sigma)));
\end{align*}

$(ii)$ if $m<d+1$, on one hand, for $\alpha=d+1,...,2d-1$, we have
\begin{align}\label{QZH002}
a_{11}^{\alpha\alpha}=&\mathcal{L}_{d}^{\alpha}\left(\int_{\varepsilon^{\bar{\gamma}}<|x'|<R}\frac{x_{i_{\alpha}}^{2}}{h_{1}-h_{2}}+\int_{|x'|<\varepsilon^{\bar{\gamma}}}\frac{x_{i_{\alpha}}^{2}}{\varepsilon+h_{1}-h_{2}}\right)\notag\\
&+M_{d}^{\ast\alpha}+O(1)\varepsilon^{\min\{\frac{1}{6},\frac{d-1}{12 m}\}}.
\end{align}
Since
\begin{align*}
&\int_{\varepsilon^{\bar{\gamma}}<|x'|<R}\frac{x_{i_{\alpha}}^{2}}{h_{1}-h_{2}}+\int_{|x'|<\varepsilon^{\bar{\gamma}}}\frac{x_{i_{\alpha}}^{2}}{\varepsilon+h_{1}-h_{2}}\\
=&\int_{|x'|<R}\frac{x_{i_{\alpha}}^{2}}{h_{1}-h_{2}}-\int_{|x'|<\varepsilon^{\bar{\gamma}}}\frac{\varepsilon x_{i_{\alpha}}^{2}}{(h_{1}-h_{2})(\varepsilon+h_{1}-h_{2})}\\
=&\int_{\Omega_{R}^{\ast}}|x_{i_{\alpha}}\partial_{x_{d}}\bar{v}^{\ast}|^{2}+O(1)\varepsilon^{\frac{d+1-m}{12m}},
\end{align*}
then
\begin{align*}
a_{11}^{\alpha\alpha}=a_{11}^{\ast\alpha\alpha}+O(1)\varepsilon^{\min\{\frac{1}{6},\frac{d+1-m}{12 m}\}}.
\end{align*}
On the other hand, for $\alpha=2d,...,\frac{d(d+1)}{2},\,d\geq3$, we have
\begin{align*}
a_{11}^{\alpha\alpha}=&\frac{\mathcal{L}_{d}^{\alpha}}{2}\left(\int_{\varepsilon^{\bar{\gamma}}<|x'|<R}\frac{x_{i_{\alpha}}^{2}+x_{j_{\alpha}}^{2}}{h_{1}-h_{2}}+\int_{|x'|<\varepsilon^{\bar{\gamma}}}\frac{x_{i_{\alpha}}^{2}+x_{j_{\alpha}}^{2}}{\varepsilon+h_{1}-h_{2}}\right)\notag\\
&+M_{d}^{\ast\alpha}+O(1)\varepsilon^{\min\{\frac{1}{6},\frac{d-1}{12m}\}}\\
=&\frac{\mathcal{L}_{d}^{\alpha}}{2}\int_{|x'|<R}\frac{x_{i_{\alpha}}^{2}+x_{j_{\alpha}}^{2}}{h_{1}-h_{2}}+M_{d}^{\ast\alpha}+O(1)\varepsilon^{\min\{\frac{1}{6},\frac{d+1-m}{12m}\}}\\
=&\frac{\mathcal{L}_{d}^{\alpha}}{2}\int_{\Omega_{R}^{\ast}}(x_{i_{\alpha}}^{2}+x_{j_{\alpha}}^{2})|\partial_{x_{d}}\bar{v}^{\ast}|^{2}+M_{d}^{\ast\alpha}+O(1)\varepsilon^{\min\{\frac{1}{6},\frac{d+1-m}{12m}\}}\\
=&a_{11}^{\ast\alpha\alpha}+O(1)\varepsilon^{\min\{\frac{1}{6},\frac{d+1-m}{12 m}\}}.
\end{align*}

Consequently, we complete the proof of (\ref{LMC}).

\end{proof}
Before proving Theorem \ref{OMG123}, we first state a result on the linear space of rigid displacement $\Psi$. Its proof can be seen in Lemma 6.1 of \cite{BLL2017}.
\begin{lemma}\label{GLW}
Let $\xi$ be an element of $\Psi$, defined by \eqref{LAK01} with $d\geq2$. If $\xi$ vanishes at $d$ distinct points $\bar{x}_{1}$, $i=1,2,...,d$, which do not lie on a $(d-1)$-dimensional plane, then $\xi=0$.
\end{lemma}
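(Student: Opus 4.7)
The plan is to reduce the claim to elementary linear algebra using the explicit parametrization of $\Psi$. The defining condition $\nabla\xi+(\nabla\xi)^{T}=0$ forces $\nabla\xi$ to be a constant skew-symmetric matrix, so every $\xi\in\Psi$ takes the form $\xi(x)=Ax+b$ with $A\in\mathbb{R}^{d\times d}$ skew-symmetric and $b\in\mathbb{R}^{d}$; this is also transparent from the explicit basis in \eqref{OPP}. The task therefore reduces to showing that the linear system $A\bar{x}_{i}+b=0$, $i=1,\dots,d$, together with the general-position hypothesis, forces $A=0$ and $b=0$.

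First I would subtract the first relation from the remaining ones to obtain
\[
A(\bar{x}_{i}-\bar{x}_{1})=0,\qquad i=2,\dots,d.
\]
Set $V=\mathrm{span}\{\bar{x}_{i}-\bar{x}_{1}:i=2,\dots,d\}\subset\mathbb{R}^{d}$. The geometric hypothesis—read as the requirement that the $d$ points are not contained in any proper affine subspace of $\mathbb{R}^{d}$—guarantees that the $d-1$ difference vectors $\bar{x}_{i}-\bar{x}_{1}$ are linearly independent, so $\dim V=d-1$ and $A$ vanishes on the whole hyperplane $V$.

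To upgrade this to $A\equiv 0$, I would use skew-symmetry on the one-dimensional orthogonal complement $V^{\perp}$. For any $w\in V^{\perp}$ and $v\in V$,
\[
\langle Aw,v\rangle=-\langle w,Av\rangle=0,
\]
so $Aw\in V^{\perp}$. Writing $Aw=cw$ and invoking $\langle Aw,w\rangle=-\langle w,Aw\rangle=0$ yields $c=0$, hence $Aw=0$. Combined with $A|_{V}=0$ this gives $A=0$ on $\mathbb{R}^{d}=V\oplus V^{\perp}$, and then $b=-A\bar{x}_{1}=0$, so $\xi\equiv 0$.

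The argument is a short exercise in linear algebra with no serious obstacle; the only point requiring attention is the correct reading of the non-degeneracy hypothesis—namely, that it supplies exactly the $d-1$ linearly independent difference vectors needed to kill $A$ on a hyperplane, after which skew-symmetry automatically disposes of the remaining one-dimensional direction.
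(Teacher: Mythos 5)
Your proof is correct and is essentially the standard argument that the paper outsources to Lemma 6.1 of \cite{BLL2017}: write $\xi(x)=Ax+b$ with $A$ skew-symmetric, use the vanishing at the $d$ points to kill $A$ on the span of the $d-1$ difference vectors, and let skew-symmetry dispose of the remaining direction (the cited proof does this via the fact that a nonzero skew-symmetric matrix has rank at least $2$, which is the same observation as your $V^{\perp}$ computation). One small caution on the hypothesis: $d$ points in $\mathbb{R}^{d}$ always lie on some $(d-1)$-dimensional affine plane, so the condition as literally stated (and as you paraphrase it, ``not contained in any proper affine subspace'') is vacuous; the intended condition is that the points do not lie on a $(d-2)$-dimensional plane, i.e.\ are affinely independent, which is exactly the property you actually use when you take $\dim V=d-1$, so your argument proves the intended statement.
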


At this point we are ready to give the proof of Theorem \ref{OMG123}.
\begin{proof}[Proof of Theorem \ref{OMG123}]
We next divide into three cases to prove Theorem \ref{OMG123}.

$(i)$ If $m\geq d+1$, for $\alpha=1,2,...,\frac{d(d+1)}{2}$, we denote
\begin{gather*}
\mathbb{F}^{\alpha}=\begin{pmatrix} b_{1}^{\alpha}&\sum\limits^{2}_{i=1}a_{i1}^{\alpha1}&\cdots&\sum\limits^{2}_{i=1}a_{i1}^{\alpha\frac{d(d+1)}{2}} \\ \sum\limits^{2}_{i=1}b_{i}^{1}&\sum\limits_{i,j=1}^{2}a_{ij}^{11}&\cdots&\sum\limits_{i,j=1}^{2}a_{ij}^{1\frac{d(d+1)}{2}} \\ \vdots&\vdots&\ddots&\vdots\\
\sum\limits^{2}_{i=1}b_{i}^{\frac{d(d+1)}{2}}&\;\;\sum\limits^{2}_{i,j=1}a_{ij}^{\frac{d(d+1)}{2}1}&\;\cdots&\;\;\sum\limits_{i,j=1}^{2}a_{ij}^{\frac{d(d+1)}{2}\frac{d(d+1)}{2}}
\end{pmatrix}.
\end{gather*}
Applying Lemma \ref{KM323}, we obtain
\begin{align*}
\det\mathbb{F}^{\alpha}=\det\mathbb{F}^{\ast\alpha}+O(\varepsilon^{\frac{m-1}{2m}}),\quad \det\mathbb{D}=\det\mathbb{D}^{\ast}+O(\varepsilon^{\frac{m-1}{2m}}).
\end{align*}
We point out that the matrix $\mathbb{D}^{\ast}$ is positive definite and thus $\det\mathbb{D}^{\ast}>0$. In fact, for $\xi=(\xi_{1},\xi_{2},...,\xi_{\frac{d(d+1)}{2}})^{T}\neq0$, it follows from ellipticity condition \eqref{ellip} that
\begin{align}\label{AJZ001}
\xi^{T}\mathbb{D}^{\ast}\xi=&\int_{\Omega^{\ast}}\Bigg(\mathbb{C}^{0}e\bigg(\sum^{\frac{d(d+1)}{2}}_{\alpha=1}\xi_{\alpha}(v_{1}^{\ast\alpha}+v_{2}^{\ast\alpha})\bigg),e\bigg(\sum^{\frac{d(d+1)}{2}}_{\beta=1}\xi_{\beta}(v_{1}^{\ast\beta}+v_{2}^{\ast\beta})\bigg)\Bigg)\notag\\
\geq&\frac{1}{C}\int_{\Omega^{\ast}}\bigg|e\bigg(\sum^{\frac{d(d+1)}{2}}_{\alpha=1}\xi_{\alpha}(v_{1}^{\ast\alpha}+v_{2}^{\ast\alpha})\bigg)\bigg|^{2}>0.
\end{align}
In the last inequality, we utilized the fact that $e\big(\sum^{\frac{d(d+1)}{2}}_{\alpha=1}\xi_{\alpha}(v_{1}^{\ast\alpha}+v_{2}^{\ast\alpha})\big)$ is not identically zero. Otherwise, if $e\big(\sum^{\frac{d(d+1)}{2}}_{\alpha=1}\xi_{\alpha}(v_{1}^{\ast\alpha}+v_{2}^{\ast\alpha})\big)=0$, then there exist some constants $a_{i}$, $i=1,2,...,\frac{d(d+1)}{2}$ such that $\sum_{\alpha=1}^{\frac{d(d+1)}{2}}\xi_{\alpha}(v_{1}^{\ast\alpha}+v_{2}^{\ast\alpha})=\sum^{\frac{d(d+1)}{2}}_{i=1}a_{i}\psi_{i}$, where $\big\{\psi_{i}\big|\,i=1,2,...,\frac{d(d+1)}{2}\big\}$ is a basis of $\Psi$ defined in \eqref{OPP}. Since $\big\{\psi_{i}\big|\,i=1,2,...,\frac{d(d+1)}{2}\big\}$ is linear independent and $\sum_{\alpha=1}^{\frac{d(d+1)}{2}}\xi_{\alpha}(v_{1}^{\ast\alpha}+v_{2}^{\ast\alpha})=0$ on $\partial D$, it follows from Lemma \ref{GLW} that $a_{i}=0$, $i=1,2,...,\frac{d(d+1)}{2}$. Then in light of $\sum_{\alpha=1}^{\frac{d(d+1)}{2}}\xi_{\alpha}(v_{1}^{\ast\alpha}+v_{2}^{\ast\alpha})=\sum_{\alpha=1}^{\frac{d(d+1)}{2}}\xi_{\alpha}\psi_{\alpha}=0$ on $\partial D_{1}^{\ast}$, we deduce from the linear independence of $\big\{\psi_{i}\big|\,i=1,2,...,\frac{d(d+1)}{2}\big\}$ that $\xi=(\xi_{1},\xi_{2},...,\xi_{\frac{d(d+1)}{2}})^{T}=0$, which is a contradiction.

Then, we have
\begin{align}\label{AGU01}
\frac{\det\mathbb{F}^{\alpha}}{\det\mathbb{D}}=&\frac{\det\mathbb{F}^{\ast\alpha}}{\det\mathbb{D}^{\ast}}\frac{1}{1-{\frac{\det\mathbb{D}^{\ast}-\det\mathbb{D}}{\det\mathbb{D}^{\ast}}}}+\frac{\det\mathbb{F}^{\alpha}-\det\mathbb{F}^{\ast\alpha}}{\det\mathbb{D}}\notag\\
=&\frac{\det\mathbb{F}^{\ast\alpha}}{\det\mathbb{D}^{\ast}}(1+O(\varepsilon^{\frac{1}{4}})).
\end{align}
Using \eqref{LMC1}--\eqref{LMC}, we obtain that for $\alpha=1,2,...,d$, if $m\geq d-1$,
\begin{align}\label{AKU01}
\frac{1}{a_{11}^{\alpha\alpha}}=\frac{\rho^{-1}_{0}(d,m;\varepsilon)}{\mathcal{L}_{d}^{\alpha}\mathcal{M}_{0}}\frac{1}{1-\frac{\mathcal{L}_{d}^{\alpha}\mathcal{M}_{0}-\rho^{-1}_{0}(d,m;\varepsilon)a_{11}^{\alpha\alpha}}{\mathcal{L}_{d}^{\alpha}\mathcal{M}_{0}}}=\frac{1+O(\hat{\varepsilon}_{0}(d,m;\sigma))}{\mathcal{L}_{d}^{\alpha}\mathcal{M}_{0}\rho_{0}(d,m;\varepsilon)},
\end{align}
and for $\alpha=d+1,...,\frac{d(d+1)}{2}$, if $m\geq d+1$,
\begin{align}\label{AKU02}
\frac{1}{a_{11}^{\alpha\alpha}}=\frac{\rho^{-1}_{2}(d,m;\varepsilon)}{\mathcal{L}_{d}^{\alpha}\mathcal{M}_{2}}\frac{1}{1-\frac{\mathcal{L}_{d}^{\alpha}\mathcal{M}_{2}-\rho^{-1}_{2}(d,m;\varepsilon)a_{11}^{\alpha\alpha}}{\mathcal{L}_{d}^{\alpha}\mathcal{M}_{2}}}=\frac{1+O(\hat{\varepsilon}_{2}(d,m;\sigma))}{\mathcal{L}_{d}^{\alpha}\mathcal{M}_{2}\rho_{2}(d,m;\varepsilon)}.
\end{align}
In view of \eqref{PLA001} and \eqref{AGU01}--\eqref{AKU02}, it follows from Cramer's rule that
\begin{align*}
C_{1}^{\alpha}-C_{2}^{\alpha}=&\frac{\prod\limits_{i\neq\alpha}^{\frac{d(d+1)}{2}}a_{11}^{ii}\det\mathbb{F}^{\alpha}}{\prod\limits_{i=1}^{\frac{d(d+1)}{2}}a_{11}^{ii}\det \mathbb{D}}(1+O(\rho_{2}^{-1}(d,m;\varepsilon)))\\
=&
\begin{cases}
\frac{\det\mathbb{F}^{\ast\alpha}}{\det \mathbb{D}^{\ast}}\frac{1+O(\varepsilon_{0}(d,m;\sigma))}{\mathcal{L}_{d}^{\alpha}\mathcal{M}_{0}\rho_{0}(d,m;\varepsilon)},&\alpha=1,2,...,d,\\
\frac{\det \mathbb{F}^{\ast\alpha}}{\det \mathbb{D}^{\ast}}\frac{1+O(\varepsilon_{2}(d,m;\sigma))}{\mathcal{L}_{d}^{\alpha}\mathcal{M}_{2}\rho_{2}(d,m;\varepsilon)},&\alpha=d+1,...,\frac{d(d+1)}{2}.
\end{cases}
\end{align*}

$(ii)$ If $d-1\leq m<d+1$, we define
\begin{gather*}\mathbb{A}_{0}=\begin{pmatrix} a_{11}^{d+1\,d+1}&\cdots&a_{11}^{d+1\frac{d(d+1)}{2}} \\\\ \vdots&\ddots&\vdots\\\\a_{11}^{\frac{d(d+1)}{2}d+1}&\cdots&a_{11}^{\frac{d(d+1)}{2}\frac{d(d+1)}{2}}\end{pmatrix}  ,\;\,
\mathbb{B}_{0}=\begin{pmatrix} \sum\limits^{2}_{i=1}a_{i1}^{d+1\,1}&\cdots&\sum\limits^{2}_{i=1}a_{i1}^{d+1\,\frac{d(d+1)}{2}} \\\\ \vdots&\ddots&\vdots\\\\ \sum\limits^{2}_{i=1}a_{i1}^{\frac{d(d+1)}{2}1}&\cdots&\sum\limits^{2}_{i=1}a_{i1}^{\frac{d(d+1)}{2}\frac{d(d+1)}{2}}\end{pmatrix} ,\end{gather*}\begin{gather*}
\mathbb{C}_{0}=\begin{pmatrix} \sum\limits^{2}_{j=1}a_{1j}^{1\,d+1}&\cdots&\sum\limits^{2}_{j=1}a_{1j}^{1\frac{d(d+1)}{2}} \\\\ \vdots&\ddots&\vdots\\\\ \sum\limits^{2}_{j=1}a_{1j}^{\frac{d(d+1)}{2}\,d+1}&\cdots&\sum\limits^{2}_{j=1}a_{1j}^{\frac{d(d+1)}{2}\frac{d(d+1)}{2}}\end{pmatrix}.
%\mathbb{D}=\begin{pmatrix} \sum\limits^{2}_{i,j=1}a_{ij}^{11}&\cdots&\sum\limits^{2}_{i,j=1}a_{ij}^{1\frac{d(d+1)}{2}} \\\\ \vdots&\ddots&\vdots\\\\ \sum\limits^{2}_{i,j=1}a_{ij}^{\frac{d(d+1)}{2}1}&\cdots&\sum\limits^{2}_{i,j=1}a_{ij}^{\frac{d(d+1)}{2}\frac{d(d+1)}{2}}\end{pmatrix}.
\end{gather*}
Denote
\begin{align*}
\mathbb{F}_{0}=\begin{pmatrix} \mathbb{A}_{0}&\mathbb{B}_{0} \\  \mathbb{C}_{0}&\mathbb{D}
\end{pmatrix}.
\end{align*}

On one hand, for $\alpha=1,2,...,d$, we denote
\begin{gather*}
\mathbb{A}^{\alpha}_{1}=\begin{pmatrix}b_{1}^{\alpha}&a_{11}^{\alpha\,d+1}&\cdots&a_{11}^{\alpha\frac{d(d+1)}{2}} \\ b_{1}^{d+1}&a_{11}^{d+1\,d+1}&\cdots&a_{11}^{d+1\frac{d(d+1)}{2}}\\ \vdots&\vdots&\ddots&\vdots\\b_{1}^{\frac{d(d+1)}{2}}&a_{11}^{\frac{d(d+1)}{2}d+1}&\cdots&a_{11}^{\frac{d(d+1)}{2}\frac{d(d+1)}{2}}
\end{pmatrix},
\end{gather*}
\begin{gather*}
\mathbb{B}^{\alpha}_{1}=\begin{pmatrix}\sum\limits_{i=1}^{2}a_{i1}^{\alpha1}&\sum\limits_{i=1}^{2}a_{i1}^{\alpha2}&\cdots&\sum\limits_{i=1}^{2}a_{i1}^{\alpha\,\frac{d(d+1)}{2}} \\ \sum\limits_{i=1}^{2}a_{i1}^{d+1\,1}&\sum\limits_{i=1}^{2}a_{i1}^{d+1\,2}&\cdots&\sum\limits_{i=1}^{2}a_{i1}^{d+1\,\frac{d(d+1)}{2}} \\ \vdots&\vdots&\ddots&\vdots\\\sum\limits_{i=1}^{2}a_{i1}^{\frac{d(d+1)}{2}\,1}&\sum\limits_{i=1}^{2}a_{i1}^{\frac{d(d+1)}{2}\,2}&\cdots&\sum\limits_{i=1}^{2}a_{i1}^{\frac{d(d+1)}{2}\frac{d(d+1)}{2}}
\end{pmatrix},
\end{gather*}
\begin{gather*}
\mathbb{C}^{\alpha}_{1}=\begin{pmatrix}\sum\limits_{i=1}^{2}b_{i}^{1}&\sum\limits_{j=1}^{2}a_{1j}^{1\,d+1}&\cdots&\sum\limits_{j=1}^{2}a_{1j}^{1\,\frac{d(d+1)}{2}} \\\sum\limits_{i=1}^{2}b_{i}^{2}&\sum\limits_{j=1}^{2}a_{1j}^{2\,d+1}&\cdots&\sum\limits_{j=1}^{2}a_{1j}^{2\,\frac{d(d+1)}{2}}\\ \vdots&\vdots&\ddots&\vdots\\\sum\limits_{i=1}^{2}b_{i}^{\frac{d(d+1)}{2}}&\sum\limits_{j=1}^{2}a_{1j}^{\frac{d(d+1)}{2}\,d+1}&\cdots&\sum\limits_{j=1}^{2}a_{1j}^{\frac{d(d+1)}{2}\frac{d(d+1)}{2}}
\end{pmatrix}.
\end{gather*}
Write
\begin{align*}
\mathbb{F}^{\alpha}_{1}=\begin{pmatrix} \mathbb{A}^{\alpha}_{1}&\mathbb{B}^{\alpha}_{1} \\  \mathbb{C}^{\alpha}_{1}&\mathbb{D}
\end{pmatrix},\;\, \alpha=1,2,...,d.
\end{align*}
Then using Lemma \ref{KM323} and \eqref{LMC}, we derive
\begin{align*}
\det\mathbb{F}_{1}^{\alpha}=\det\mathbb{F}_{1}^{\ast\alpha}+O(\varepsilon^{\frac{d+1-m}{12m}}),\quad\det\mathbb{F}_{0}=\det\mathbb{F}^{\ast}_{0}+O(\varepsilon^{\frac{d+1-m}{12m}}),
\end{align*}
which yields that
\begin{align}\label{QGH01}
\frac{\det\mathbb{F}_{1}^{\alpha}
}{\det\mathbb{F}_{0}}=&\frac{\det\mathbb{F}_{1}^{\ast\alpha}}{\det\mathbb{F}_{0}^{\ast}}\frac{1}{1-{\frac{\det\mathbb{F}_{0}^{\ast}-\det\mathbb{F}_{0}}{\det\mathbb{F}_{0}^{\ast}}}}+\frac{\det\mathbb{F}_{1}^{\alpha}-\det\mathbb{F}_{1}^{\ast\alpha}}{\det\mathbb{F}_{0}}\notag\\
=&\frac{\det\mathbb{F}^{\ast\alpha}_{1}}{\det\mathbb{F}_{0}^{\ast}}(1+O(\varepsilon^{\frac{d+1-m}{12m}})).
\end{align}
Similar to the matrix $\mathbb{D}^{\ast}$ in \eqref{AJZ001}, we obtain that $\det\mathbb{F}_{0}^{\ast}\neq0$. Denote
\begin{align*}
\rho_{d}(\varepsilon)=&
\begin{cases}
|\ln\varepsilon|,&d=2,\\
1,&d\geq3.
\end{cases}
\end{align*}
Then combining \eqref{PLA001}, \eqref{AKU01} and \eqref{QGH01}, it follows from Cramer's rule that for $\alpha=1,2,...,d$,
\begin{align*}
C_{1}^{\alpha}-C_{2}^{\alpha}=&\frac{\prod\limits_{i\neq\alpha}^{d}a_{11}^{ii}\det\mathbb{F}_{1}^{\alpha}}{\prod\limits_{i=1}^{d}a_{11}^{ii}\det \mathbb{F}_{0}}(1+O(\rho_{0}^{-1}(d,m;\varepsilon)\rho_{d}(\varepsilon)))\\
=&\frac{\det\mathbb{F}_{1}^{\ast\alpha}}{\det \mathbb{F}_{0}^{\ast}}\frac{1+O(\bar{\varepsilon}_{0}(d,m;\sigma))}{\mathcal{L}_{d}^{\alpha}\mathcal{M}_{0}\rho_{0}(d,m;\varepsilon)}.
\end{align*}

On the other hand, for $\alpha=d+1,...,\frac{d(d+1)}{2}$, we replace the elements of $\alpha$-th column in the matrix $\mathbb{A}_{0}$ by column vector $\Big(b_{1}^{d+1},...,b_{1}^{\frac{d(d+1)}{2}}\Big)^{T}$ and then denote this new matrix by $\mathbb{A}_{2}^{\alpha}$ as follows:
\begin{gather*}
\mathbb{A}_{2}^{\alpha}=
\begin{pmatrix}
a_{11}^{d+1\,d+1}&\cdots&b_{1}^{d+1}&\cdots&a_{11}^{d+1\,\frac{d(d+1)}{2}} \\\\ \vdots&\ddots&\vdots&\ddots&\vdots\\\\a_{11}^{\frac{d(d+1)}{2}\,d+1}&\cdots&b_{1}^{\frac{d(d+1)}{2}}&\cdots&a_{11}^{\frac{d(d+1)}{2}\,\frac{d(d+1)}{2}}
\end{pmatrix}.
\end{gather*}
Similarly as before, by replacing the elements of $\alpha$-th column in the matrix $\mathbb{C}_{0}$ by column vector $\Big(\sum\limits_{i=1}^{2}b_{i}^{1},...,\sum\limits_{i=1}^{2}b_{i}^{\frac{d(d+1)}{2}}\Big)^{T}$, we get the new matrix $\mathbb{C}_{2}^{\alpha}$ as follows:
\begin{gather*}
\mathbb{C}_{2}^{\alpha}=
\begin{pmatrix}
\sum\limits_{j=1}^{2} a_{1j}^{1\,d+1}&\cdots&\sum\limits_{i=1}^{2}b_{i}^{1}&\cdots&\sum\limits_{j=1}^{2} a_{1j}^{1\,\frac{d(d+1)}{2}} \\\\ \vdots&\ddots&\vdots&\ddots&\vdots\\\\\sum\limits_{j=1}^{2} a_{1j}^{\frac{d(d+1)}{2}\,d+1}&\cdots&\sum\limits_{i=1}^{2}b_{i}^{\frac{d(d+1)}{2}}&\cdots&\sum\limits_{j=1}^{2} a_{1j}^{\frac{d(d+1)}{2}\,\frac{d(d+1)}{2}}
\end{pmatrix}.
\end{gather*}
Define
\begin{align*}
\mathbb{F}^{\alpha}_{2}=\begin{pmatrix} \mathbb{A}^{\alpha}_{2}&\mathbb{B}^{\alpha}_{0} \\  \mathbb{C}^{\alpha}_{2}&\mathbb{D}
\end{pmatrix}.
\end{align*}
Using Lemma \ref{KM323} and \eqref{LMC} again, we arrive at
\begin{align}\label{DBU01}
\frac{\det\mathbb{F}_{2}^{\alpha}
}{\det\mathbb{F}_{0}}=&\frac{\det\mathbb{F}_{2}^{\ast\alpha}}{\det\mathbb{F}_{0}^{\ast}}\frac{1}{1-{\frac{\det\mathbb{F}_{0}^{\ast}-\det\mathbb{F}_{0}}{\det\mathbb{F}_{0}^{\ast}}}}+\frac{\det\mathbb{F}_{2}^{\alpha}-\det\mathbb{F}_{2}^{\ast\alpha}}{\det\mathbb{F}_{0}}\notag\\
=&\frac{\det\mathbb{F}^{\ast\alpha}_{2}}{\det\mathbb{F}_{0}^{\ast}}(1+O(\varepsilon^{\frac{d+1-m}{12m}})).
\end{align}
This, together with \eqref{PLA001} and Cramer's rule, leads to that for $\alpha=d+1,...,\frac{d(d+1)}{2}$,
\begin{align*}
C_{1}^{\alpha}-C_{2}^{\alpha}=&\frac{\det\mathbb{F}_{2}^{\alpha}}{\det \mathbb{F}_{0}}(1+O(\rho_{0}^{-1}(d,m;\varepsilon)))=\frac{\det\mathbb{F}_{2}^{\ast\alpha}}{\det \mathbb{F}_{0}^{\ast}}(1+O(\bar{\varepsilon}_{2}(d,m;\sigma))).
\end{align*}

$(iii)$ If $m<d-1$, we replace the elements of $\alpha$-th column in the matrices $\mathbb{A}$ and $\mathbb{C}$ by column vectors $\Big(b_{1}^{1},...,b_{1}^{\frac{d(d+1)}{2}}\Big)^{T}$ and $\Big(\sum\limits_{i=1}^{2}b_{i}^{1},...,\sum\limits_{i=1}^{2}b_{i}^{\frac{d(d+1)}{2}}\Big)^{T}$, respectively, and then denote these two new matrices by $\mathbb{A}_{3}^{\alpha}$ and $\mathbb{C}_{3}^{\alpha}$ as follows:
\begin{gather*}
\mathbb{A}_{3}^{\alpha}=
\begin{pmatrix}
a_{11}^{11}&\cdots&b_{1}^{1}&\cdots&a_{11}^{1\,\frac{d(d+1)}{2}} \\\\ \vdots&\ddots&\vdots&\ddots&\vdots\\\\a_{11}^{\frac{d(d+1)}{2}\,1}&\cdots&b_{1}^{\frac{d(d+1)}{2}}&\cdots&a_{11}^{\frac{d(d+1)}{2}\,\frac{d(d+1)}{2}}
\end{pmatrix},
\end{gather*}
and
\begin{gather*}
\mathbb{C}_{3}^{\alpha}=
\begin{pmatrix}
\sum\limits_{j=1}^{2} a_{1j}^{11}&\cdots&\sum\limits_{i=1}^{2}b_{i}^{1}&\cdots&\sum\limits_{j=1}^{2} a_{1j}^{1\,\frac{d(d+1)}{2}} \\\\ \vdots&\ddots&\vdots&\ddots&\vdots\\\\\sum\limits_{j=1}^{2} a_{1j}^{\frac{d(d+1)}{2}\,1}&\cdots&\sum\limits_{i=1}^{2}b_{i}^{\frac{d(d+1)}{2}}&\cdots&\sum\limits_{j=1}^{2} a_{1j}^{\frac{d(d+1)}{2}\,\frac{d(d+1)}{2}}
\end{pmatrix}.
\end{gather*}
Define
\begin{align*}
\mathbb{F}^{\alpha}_{3}=\begin{pmatrix} \mathbb{A}^{\alpha}_{3}&\mathbb{B} \\  \mathbb{C}^{\alpha}_{3}&\mathbb{D}
\end{pmatrix},\;\,\alpha=1,2,...,\frac{d(d+1)}{2},\quad \mathbb{F}=\begin{pmatrix} \mathbb{A}&\mathbb{B} \\  \mathbb{C}&\mathbb{D}
\end{pmatrix}.
\end{align*}
Then utilizing Lemmas \ref{KM323} and \ref{lemmabc}, we arrive at
\begin{align*}
\det\mathbb{F}_{3}^{\alpha}=\det\mathbb{F}_{3}^{\ast\alpha}+O(\varepsilon^{\min\{\frac{1}{6},\frac{d-1-m}{12m}\}}),\quad\det\mathbb{F}=\det\mathbb{F}^{\ast}+O(\varepsilon^{\min\{\frac{1}{6},\frac{d-1-m}{12m}\}}).
\end{align*}
Using the same argument as in \eqref{AJZ001}, we deduce that $\det\mathbb{F}^{\ast}\neq0$. Then we have
\begin{align*}
\frac{\det\mathbb{F}_{3}^{\alpha}
}{\det\mathbb{F}}=&\frac{\det\mathbb{F}_{3}^{\ast\alpha}}{\det\mathbb{F}^{\ast}}\frac{1}{1-{\frac{\det\mathbb{F}^{\ast}-\det\mathbb{F}}{\det\mathbb{F}^{\ast}}}}+\frac{\det\mathbb{F}_{3}^{\alpha}-\det\mathbb{F}_{3}^{\ast\alpha}}{\det\mathbb{F}}\notag\\
=&\frac{\det\mathbb{F}^{\ast\alpha}_{3}}{\det\mathbb{F}^{\ast}}(1+O(\varepsilon^{\min\{\frac{1}{6},\frac{d-1-m}{12m}\}})),
\end{align*}
which, in combination with \eqref{PLA001} and Cramer's rule, reads that for $\alpha=1,2,...,\frac{d(d+1)}{2}$,
\begin{align*}
C_{1}^{\alpha}-C_{2}^{\alpha}=&\frac{\det\mathbb{F}_{3}^{\alpha}}{\det \mathbb{F}}=\frac{\det\mathbb{F}_{3}^{\ast\alpha}}{\det \mathbb{F}^{\ast}}(1+O(\varepsilon^{\min\{\frac{1}{6},\frac{d-1-m}{12m}\}})).
\end{align*}
The proof is complete.

\end{proof}

\section{An example of two adjacent curvilinear squares with rounded-off angles}\label{SEC006}

\begin{figure}[htb]
\center{\includegraphics[width=0.45\textwidth]{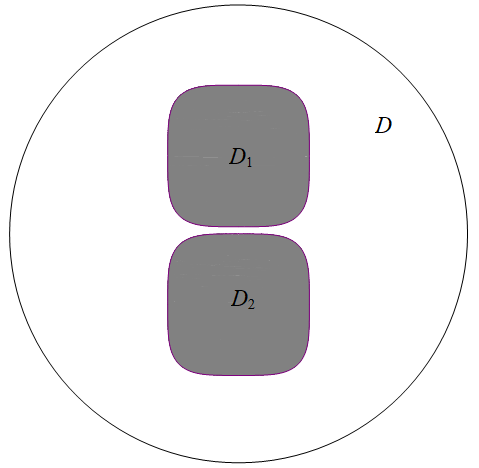}}
\caption{Curvilinear squares with rounded-off angles, $m=4$}
\end{figure}
From the results in \cite{GB2005,HL2018}, we know that the stress concentration will weaken as the convexity index $m$ tends to infinity, which implies that the fiber shapes of curvilinear squares with rounded-off angles (see Figure 1) are superior to circular fibers in the practical design of composite materials. Therefore, it is essential to give a sharp characterization for the gradient in terms of this explicit regular shapes from the view of the shape design of materials. Assume that there exist two positive constants $r_{1}$ and $r_{2}$, independent of $\varepsilon$, such that $\partial D_{1}$ and $\partial D_{2}$ can be expressed as
\begin{align}\label{ZZW986}
|x_{1}|^{m}+|x_{2}-\varepsilon-r_{1}|^{m}=&r_{1}^{m}\quad\mathrm{and}\quad|x_{1}|^{m}+|x_{2}+r_{2}|^{m}=r_{2}^{m},
\end{align}
respectively. Making use of Taylor expansion, we get
\begin{align}\label{ZCZ009}
h_{1}(x_{1})-h_{2}(x_{1})=\tau_{0}|x_{1}|^{m}+O(|x_{1}|^{2m}),\quad\mathrm{in}\;\Omega_{r_{0}},
\end{align}
where $\tau_{0}=\frac{1}{m}\left(\frac{1}{r_{1}^{m-1}}+\frac{1}{r_{2}^{m-1}}\right)$ and $0<r_{0}<\frac{1}{2}\min\{r_{1},r_{2}\}$, $r_{0}$ is a small constant independent of $\varepsilon$. Then, we obtain
\begin{example}\label{coro00389}
Assume as above, condition \eqref{ZZW986} holds. Let $u\in H^{1}(D;\mathbb{R}^{2})\cap C^{1}(\overline{\Omega};\mathbb{R}^{2})$ be the solution of (\ref{La.002}). Then for a sufficiently small $\varepsilon>0$ and $x\in\Omega_{r_{0}}$,

$(i)$ for $m>3$, if $\det \mathbb{D}^{\ast}\neq0$ and $\det\mathbb{F}^{\ast\alpha}\neq0$, $\alpha=1,2,3$,
\begin{align*}
\nabla u=&\sum^{2}_{\alpha=1}\frac{\det\mathbb{F}^{\ast\alpha}}{\det \mathbb{D}^{\ast}}\frac{\varepsilon^{\frac{m-1}{m}}}{\mathcal{L}_{2}^{\alpha}\mathcal{M}_{0}}\frac{1+O(\varepsilon^{\min\{\frac{1}{4},\frac{m-3}{m}\}})}{1+\mathcal{G}^{\ast\alpha}_{m}\varepsilon^{\frac{m-1}{m}}}\nabla\bar{u}_{1}^{\alpha}\notag\\
&+\frac{\det \mathbb{F}^{\ast3}}{\det \mathbb{D}^{\ast}}\frac{\varepsilon^{\frac{m-3}{m}}}{\mathcal{L}_{2}^{3}\mathcal{M}_{2}}\frac{1+O(\varepsilon^{\min\{\frac{1}{4},\frac{12m-35}{12m},\frac{m-3}{m}\}})}{1+\mathcal{G}^{\ast3}_{m}\varepsilon^{\frac{m-3}{m}}}\nabla\bar{u}_{1}^{3}+O(1)\|\varphi\|_{C^{0}(\partial D)};
\end{align*}

$(ii)$ for $m=3$, if $\det \mathbb{D}^{\ast}\neq0$ and $\det\mathbb{F}^{\ast\alpha}\neq0$, $\alpha=1,2,3$,
\begin{align*}
\nabla u=&\sum^{2}_{\alpha=1}\frac{\det\mathbb{F}^{\ast\alpha}}{\det \mathbb{D}^{\ast}}\frac{\varepsilon^{\frac{2}{3}}}{\mathcal{L}_{2}^{\alpha}\mathcal{M}_{0}}\frac{1+O(|\ln\varepsilon|^{-1})}{1+\mathcal{G}^{\ast\alpha}_{3}\varepsilon^{\frac{2}{3}}}\nabla\bar{u}_{1}^{\alpha}\notag\\
&+\frac{\det \mathbb{F}^{\ast3}}{\det \mathbb{D}^{\ast}}\frac{1}{\mathcal{L}_{2}^{3}\mathcal{M}_{2}}\frac{1+O(|\ln\varepsilon|^{-1})}{|\ln\varepsilon|+\mathcal{G}^{\ast3}_{3}}\nabla\bar{u}_{1}^{3}+O(1)\|\varphi\|_{C^{0}(\partial D)};
\end{align*}

$(iii)$ for $2\leq m<3$, if $\det\mathbb{F}_{0}^{\ast}\neq0$, $\det\mathbb{F}_{2}^{\ast3}\neq0$, and $\det\mathbb{F}_{1}^{\ast\alpha}\neq0$, $\alpha=1,2$,
\begin{align*}
\nabla u=&\sum^{2}_{\alpha=1}\frac{\det\mathbb{F}_{1}^{\ast\alpha}}{\det \mathbb{F}_{0}^{\ast}}\frac{\varepsilon^{\frac{m-1}{m}}}{\mathcal{L}_{2}^{\alpha}\mathcal{M}_{0}}\frac{1+O(\varepsilon^{\frac{3-m}{12m}})}{1+\mathcal{G}^{\ast\alpha}_{m}\varepsilon^{\frac{m-1}{m}}}\nabla\bar{u}_{1}^{\alpha}\notag\\
&+\frac{\det\mathbb{F}_{2}^{\ast3}}{\det \mathbb{F}_{0}^{\ast}}(1+O(\varepsilon^{\frac{3-m}{12m}}))\nabla\bar{u}_{1}^{3}+O(1)\|\varphi\|_{C^{0}(\partial D)},
\end{align*}
where the explicit auxiliary functions $\bar{u}_{1}^{\alpha}$, $\alpha=1,2,3$ are defined in \eqref{zzwz002} in the case of $d=2$, the constants $\mathcal{M}_{i}$, $i=0,2$, are defined by \eqref{WEN} with $d=2$, the Lam\'{e} constants $\mathcal{L}_{2}^{\alpha}$, $\alpha=1,2,3$ are defined by \eqref{AZ}, the blow-up factor matrices $\mathbb{D}^{\ast}$, $\mathbb{F}_{0}^{\ast}$, $\mathbb{F}^{\ast\alpha}$, $\alpha=1,2,3$, $\mathbb{F}_{1}^{\ast\alpha}$, $\alpha=1,2,$ and $\mathbb{F}_{2}^{\ast3}$ are defined by \eqref{WEN002}--\eqref{WEN003} and \eqref{GC003}--\eqref{GC006} in the case of $d=2$, the geometry constants $\mathcal{G}^{\ast\alpha}_{m}$, $\alpha=1,2,3$ are defined by \eqref{ZWZWZW0019} below.
\end{example}

\begin{remark}
For two nearly touching curvilinear squares with rounded-off angles, we obtain a more precise asymptotic formula of the stress concentration than that in Theorems \ref{Lthm066} and \ref{ZHthm002} in virtue of the geometry constants $\mathcal{G}^{\ast\alpha}_{m}$, $\alpha=1,2,3$. Moreover, the values of these geometry constants depend only on $m,\lambda,\mu,r_{1},r_{2}$, but not on the distance parameter $\varepsilon$ and the length parameter $r_{0}$ of the narrow region. This is vitally important for numerical calculations and simulations in future investigation. Observe that when $m=2$, these two curvilinear squares become disks and the dependence on the radius is exhibited in the form of the sum of curvature of these two disks. This dependence was previously revealed in \cite{KLY2013,LLY2019,ZH202101} for the perfect conductivity problem, which is regarded as the scalar model of the elasticity problem.

\end{remark}
\begin{remark}
For the purpose of revealing the dependence on the relative principal curvature of the inclusions, we also give an example in terms of the strictly inclusions in dimension three as follows:
\begin{align*}
h_{1}(x')-h_{2}(x')=\kappa_{1}x_{1}^{2}+\kappa_{2}x_{2}^{2},\quad x'\in B_{2R}',
\end{align*}
where $\kappa_{i}$, $i=1,2$ are two positive constants independent of $\varepsilon$. Then applying the proofs of Theorem \ref{ZHthm002} and Example \ref{coro00389} with a slight modification, we obtain that for $x\in\Omega_{R}$,
\begin{align*}
\nabla u=&\sum\limits_{\alpha=1}^{3}\frac{\det\mathbb{F}_{1}^{\ast\alpha}}{\det \mathbb{F}_{0}^{\ast}}\frac{\sqrt{\kappa_{1}\kappa_{2}}\big(1+O(|\ln\varepsilon|^{-1})\big)}{\pi\mathcal{L}_{3}^{\alpha}(|\ln\varepsilon|+\mathcal{G}_{\kappa_{1},\kappa_{2}}^{\ast\alpha})}\nabla\bar{u}_{1}^{\alpha}\notag\\
&+\sum\limits_{\alpha=4}^{6}\frac{\det\mathbb{F}_{2}^{\ast\alpha}}{\det \mathbb{F}_{0}^{\ast}}(1+O(|\ln\varepsilon|^{-1}))\nabla\bar{u}^{\alpha}_{1}+O(1)\|\varphi\|_{C^{0}(\partial D)},
\end{align*}
where
$$\mathcal{G}^{\ast\alpha}_{\kappa_{1},\kappa_{2}}=2\ln R-\frac{2}{\pi}\int^{\frac{\pi}{2}}_{0}\ln(\kappa_{1}^{-1}\cos^{2}\theta+\kappa_{2}^{-1}\sin^{2}\theta)\,d\theta+\frac{\sqrt{\kappa_{1}\kappa_{2}}}{\pi\mathcal{L}_{3}^{\alpha}}\mathcal{M}_{3}^{\ast\alpha},\;\;\alpha=1,2,3,$$ with $\mathcal{M}_{3}^{\ast\alpha}$ defined by \eqref{LLLM001} in the case of $d=3$, the Lam\'{e} constants $\mathcal{L}_{3}^{\alpha}$ are defined by \eqref{AZ110} with $d=3$, the blow-up factor matrices $\mathbb{F}_{0}^{\ast}$, $\mathbb{F}^{\ast\alpha}_{1},$ $\alpha=1,2,3$, $\mathbb{F}^{\ast\alpha}_{2},$ $\alpha=4,5,6$ are defined by \eqref{GC003}--\eqref{GC006} with $d=3$. We here would like to emphasize that for two strictly convex inclusions, Li, Li and Yang \cite{LLY2019} were the first to present the precise calculation of the energy and then establish the asymptotic expansions of the concentrated field for the perfect conductivity problem. For the asymptotic results corresponding to more generalized $m$-convex inclusions with different principal curvatures, see Theorem 3.3 in \cite{MZ2021}.

\end{remark}

\begin{lemma}\label{lem0066666}
Assume as in Example \ref{coro00389}. Then, for a sufficiently small $\varepsilon>0$,

$(i)$ for $\alpha=1,2,$
\begin{align*}
a_{11}^{\alpha\alpha}=&\mathcal{L}_{2}^{\alpha}\mathcal{M}_{0}\rho_{0}(2,m;\varepsilon)+\mathcal{K}_{m}^{\ast\alpha}+O(1)\varepsilon^{\frac{1}{12m}};
\end{align*}

$(ii)$ for $\alpha=3$,
\begin{align*}
a_{11}^{33}=&
\begin{cases}
a_{11}^{\ast33}+O(1)\varepsilon^{\frac{3-m}{12m}},&2\leq m<3,\\
\mathcal{L}_{2}^{3}\mathcal{M}_{2}\rho_{2}(2,m;\varepsilon)+\mathcal{K}_{m}^{\ast3}+O(1)\varepsilon^{\frac{1}{12m}},&m\geq3,
\end{cases}
\end{align*}
where $\mathcal{M}_{i}$, $i=0,2$ are defined by \eqref{WEN} in the case of $d=2$, $\mathcal{K}_{m}^{\ast\alpha}$, $\alpha=1,2,3$ are defined in \eqref{LKM} and \eqref{LKM001} below.

\end{lemma}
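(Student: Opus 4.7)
The plan is to follow the same three-region decomposition as in Lemma \ref{lemmabc}, namely
\begin{align*}
a_{11}^{\alpha\alpha} = \int_{\Omega\setminus\Omega_{r_{0}}}(\mathbb{C}^{0}e(v_{1}^{\alpha}),e(v_{1}^{\alpha})) + \int_{\Omega_{\varepsilon^{\bar{\gamma}}}}(\mathbb{C}^{0}e(v_{1}^{\alpha}),e(v_{1}^{\alpha})) + \int_{\Omega_{r_{0}}\setminus\Omega_{\varepsilon^{\bar{\gamma}}}}(\mathbb{C}^{0}e(v_{1}^{\alpha}),e(v_{1}^{\alpha})),
\end{align*}
with $\bar{\gamma}=\frac{1}{12m}$, but to exploit the explicit thickness formula \eqref{ZCZ009} so as to compute the principal integrals exactly rather than only estimate them. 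The outer piece is handled exactly as in Lemma \ref{lemmabc}: the rescaling argument combined with the maximum-principle bound \eqref{AST123} yields $\int_{\Omega^{\ast}\setminus\Omega_{r_{0}}^{\ast}}(\mathbb{C}^{0}e(v_{1}^{\ast\alpha}),e(v_{1}^{\ast\alpha}))+O(\varepsilon^{1/6})$, which will eventually be absorbed into the constant $\mathcal{K}_{m}^{\ast\alpha}$.

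For the central integral over $\Omega_{\varepsilon^{\bar{\gamma}}}$, the next step is to use Corollary \ref{thm86} to replace $v_{1}^{\alpha}$ by the auxiliary function $\bar{u}_{1}^{\alpha}$, reducing it to the scalar principal integral
\begin{align*}
\mathcal{L}_{2}^{\alpha}\int_{|x_{1}|<\varepsilon^{\bar{\gamma}}}\frac{f_{\alpha}(x_{1})}{\varepsilon+h_{1}(x_{1})-h_{2}(x_{1})}\,dx_{1},\qquad f_{\alpha}\equiv 1\;(\alpha=1,2),\;\;f_{3}(x_{1})=x_{1}^{2}.
\end{align*}
Because \eqref{ZCZ009} gives $h_{1}-h_{2}=\tau_{0}|x_{1}|^{m}+O(|x_{1}|^{2m})$, substituting the leading profile $\tau_{0}|x_{1}|^{m}$ produces an error with an extra factor $|x_{1}|^{m}\leq\varepsilon^{\bar{\gamma}m}$, which is of admissible order $\varepsilon^{1/(12m)}$ relative to the leading integral. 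A scaling $x_{1}=(\varepsilon/\tau_{0})^{1/m}t$ then evaluates the leading integral explicitly and, together with the definition of $\mathcal{M}_{i}$ in \eqref{WEN}, produces the blow-up rate $\mathcal{L}_{2}^{\alpha}\mathcal{M}_{0}\rho_{0}(2,m;\varepsilon)$ for $\alpha=1,2$, and $\mathcal{L}_{2}^{3}\mathcal{M}_{2}\rho_{2}(2,m;\varepsilon)$ for $\alpha=3$ provided $m\geq 3$.

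For the intermediate annulus, the plan is to reproduce the split of Lemma \ref{lemmabc} into the excess region of thickness $\varepsilon$, a cross-term containing $e(v_{1}^{\alpha}-v_{1}^{\ast\alpha})$, and the touching integral $\int_{\Omega_{r_{0}}^{\ast}\setminus\Omega_{\varepsilon^{\bar{\gamma}}}^{\ast}}(\mathbb{C}^{0}e(v_{1}^{\ast\alpha}),e(v_{1}^{\ast\alpha}))$. The first two pieces are $O(\varepsilon^{1/6})$ by \eqref{con035}, \eqref{OPKT001} and \eqref{LKT6.005}. Rewriting the touching integral through $\bar{u}_{1}^{\ast\alpha}$ produces the principal integral $\mathcal{L}_{2}^{\alpha}\int_{\varepsilon^{\bar{\gamma}}<|x_{1}|<r_{0}}\frac{f_{\alpha}(x_{1})\,dx_{1}}{h_{1}-h_{2}}$ plus the geometric constant $M_{2}^{\ast\alpha}$ from \eqref{LLLM001}. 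Adding the central and intermediate principal integrals collapses them to $\mathcal{L}_{2}^{\alpha}\int_{|x_{1}|<r_{0}}\frac{f_{\alpha}(x_{1})\,dx_{1}}{\varepsilon+\tau_{0}|x_{1}|^{m}}+O(\varepsilon^{1/(12m)})$. For $\alpha=1,2$ (any $m\geq 2$) and for $\alpha=3$ with $m\geq 3$, peeling off the $\mathbb{R}$-integral via the same scaling and subtracting the tail over $|x_{1}|>r_{0}$ isolates the blow-up rate and leaves a convergent, $\varepsilon$-independent residual; bundling this residual with $M_{2}^{\ast\alpha}$ and the outer-region contribution produces precisely $\mathcal{K}_{m}^{\ast\alpha}$ in \eqref{LKM}--\eqref{LKM001}. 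For $\alpha=3$ and $2\leq m<3$, the integrand $x_{1}^{2}/(\varepsilon+\tau_{0}|x_{1}|^{m})$ is already integrable on $\{|x_{1}|<r_{0}\}$ and converges pointwise a.e.\ to $x_{1}^{2}/(\tau_{0}|x_{1}|^{m})$, so the entire expression collapses to $a_{11}^{\ast 33}$ up to an $O(\varepsilon^{(3-m)/(12m)})$ correction controlled by dominated convergence.

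The chief obstacle will be achieving the sharp remainder $\varepsilon^{1/(12m)}$: assumption (H1) holds here only with the rather weak value $\sigma=m$, so the correction $O(|x_{1}|^{2m})$ in \eqref{ZCZ009} is borderline, and one must verify that on both the central disc $|x_{1}|<\varepsilon^{\bar{\gamma}}$ and the intermediate annulus its contribution is strictly smaller than the leading blow-up term and than the geometric constant. The verification is especially delicate in the $\alpha=3$ regime near $m=3$, where the weight $x_{1}^{2}$ interacts with the cutoff $\varepsilon^{\bar{\gamma}}$ through the ratio $\varepsilon^{(3-m)/(12m)}$, requiring the exponent $\bar{\gamma}=\frac{1}{12m}$ to be chosen precisely so that all competing error terms remain subcritical.
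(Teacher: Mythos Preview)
Your proposal is correct and follows essentially the same route as the paper: the three-region split with $\bar{\gamma}=\frac{1}{12m}$, replacement of $h_{1}-h_{2}$ by $\tau_{0}|x_{1}|^{m}$ via \eqref{ZCZ009}, extension of the principal integral to the whole line with a tail subtraction, and in the case $\alpha=3$, $2\le m<3$ the direct reduction to $a_{11}^{\ast33}$. One point you omit is the paper's final step: it verifies that the constant $\mathcal{K}_{m}^{\ast\alpha}$, although its defining formula \eqref{LKM}--\eqref{LKM001} visibly involves $r_{0}$, is in fact independent of $r_{0}$ (by writing the asymptotic for two different radii and subtracting). This is not needed for the bare asymptotic statement of the lemma, but it is what makes $\mathcal{K}_{m}^{\ast\alpha}$ a genuine geometric invariant and underpins Remark~5.2.
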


\begin{proof}
{\bf Step 1.}
Similar to \eqref{ZPH001}, we obtain that for $\alpha=1,2$,
\begin{align*}
a_{11}^{\alpha\alpha}=&\mathcal{L}_{2}^{\alpha}\left(\int_{\varepsilon^{\frac{1}{12m}}<|x_{1}|<r_{0}}\frac{dx_{1}}{h_{1}(x_{1})-h_{2}(x_{1})}+\int_{|x_{1}|<\varepsilon^{\frac{1}{12m}}}\frac{dx_{1}}{\varepsilon+h_{1}(x_{1})-h_{2}(x_{1})}\right)\notag\\
&+M_{2}^{\ast\alpha}+O(1)\varepsilon^{\frac{1}{12m}},
\end{align*}
where
\begin{align*}
M_{2}^{\ast\alpha}=&\int_{\Omega^{\ast}\setminus\Omega^{\ast}_{r_{0}}}(\mathbb{C}^{0}e(\bar{u}_{1}^{\ast\alpha}),e(\bar{u}_{1}^{\ast\alpha}))+\int_{\Omega_{r_{0}}^{\ast}}(\mathbb{C}^{0}e(v_{1}^{\ast\alpha}-\bar{u}_{1}^{\ast\alpha}),e(v_{1}^{\ast\alpha}-\bar{u}_{1}^{\ast\alpha}))\notag\\
&+\int_{\Omega_{r_{0}}^{\ast}}2\big[(\mathbb{C}^{0}e(v_{1}^{\ast\alpha}-\bar{u}_{1}^{\ast\alpha}),e(\bar{u}_{1}^{\ast\alpha}))+(\mathbb{C}^{0}e(\psi_{\alpha}\bar{v}^{\ast}),e(\mathcal{F}_{\alpha}^{\ast}))\big]\\
&+\int_{\Omega_{r_{0}}^{\ast}}(\mathbb{C}^{0}e(\mathcal{F}_{\alpha}^{\ast}),e(\mathcal{F}_{\alpha}^{\ast}))+
\begin{cases}
\int_{\Omega_{r_{0}}^{\ast}}(\lambda+\mu)(\partial_{x_{1}}\bar{v}^{\ast})^{2}+\mu(\partial_{x_{1}}\bar{v}^{\ast})^{2},&\alpha=1,\\
\int_{\Omega_{r_{0}}^{\ast}}\mu(\partial_{x_{1}}\bar{v}^{\ast})^{2},&\alpha=2.
\end{cases}
\end{align*}

First, it follows from \eqref{ZCZ009} that
\begin{align*}
\int_{\varepsilon^{\frac{1}{12m}}<|x_{1}|<r_{0}}\left(\frac{1}{h_{1}-h_{2}}-\frac{1}{\tau_{0}|x_{1}|^{m}}\right)=\int_{\varepsilon^{\frac{1}{12m}}<|x_{1}|<r_{0}}O(1)=C^{\ast}_{1}+O(1)\varepsilon^{\frac{1}{12m}},
\end{align*}
where $C^{\ast}_{1}$ depends on $m,\tau_{0},r_{0}$, but not on $\varepsilon$. Then
\begin{align*}
\int_{\varepsilon^{\frac{1}{12m}}<|x_{1}|<r_{0}}\frac{dx_{1}}{h_{1}-h_{2}}=&\int_{\varepsilon^{\frac{1}{12m}}<|x_{1}|<r_{0}}\frac{dx_{1}}{\tau_{0}|x_{1}|^{m}}+C^{\ast}_{1}+O(1)\varepsilon^{\frac{1}{12m}}.
\end{align*}
Likewise, we have
\begin{align*}
\int_{|x_{1}|<\varepsilon^{\frac{1}{12m}}}\frac{dx_{1}}{\varepsilon+h_{1}-h_{2}}=&\int_{|x_{1}|<\varepsilon^{\frac{1}{12m}}}\frac{dx_{1}}{\varepsilon+\tau_{0}|x_{1}|^{m}}+O(1)\varepsilon^{\frac{1}{12m}}.
\end{align*}
Then $a_{11}^{\alpha\alpha}$ becomes
\begin{align*}
a_{11}^{\alpha\alpha}=&\mathcal{L}_{2}^{\alpha}\left(\int_{\varepsilon^{\frac{1}{12m}}<|x_{1}|<r_{0}}\frac{dx_{1}}{\tau_{0}|x_{1}|^{m}}+\int_{|x_{1}|<\varepsilon^{\frac{1}{12m}}}\frac{dx_{1}}{\varepsilon+\tau_{0}|x_{1}|^{m}}\right)+\widetilde{M}^{\ast\alpha}_{2}+O(1)\varepsilon^{\frac{1}{12m}},
\end{align*}
where $\widetilde{M}^{\ast\alpha}_{2}=M^{\ast\alpha}_{2}+\mathcal{L}_{2}^{\alpha}C^{\ast}_{1}$. Since
\begin{align*}
&\int_{\varepsilon^{\frac{1}{12m}}<|x_{1}|<r_{0}}\frac{dx_{1}}{\tau_{0}|x_{1}|^{m}}+\int_{|x_{1}|<\varepsilon^{\frac{1}{12m}}}\frac{dx_{1}}{\varepsilon+\tau_{0}|x_{1}|^{m}}\notag\\
=&\int_{-\infty}^{+\infty}\frac{1}{\varepsilon+\tau_{0}|x_{1}|^{m}}-\int_{|x_{1}|>r_{0}}\frac{1}{\tau_{0}|x_{1}|^{m}}+\int_{|x_{1}|>\varepsilon^{\frac{1}{12m}}}\frac{\varepsilon}{\tau_{0}|x_{1}|^{m}(\varepsilon+\tau_{0}|x_{1}|^{m})}\notag\\
=&\frac{2\Gamma[\frac{1}{m}]}{m\tau_{0}^{\frac{1}{m}}}\rho_{0}(2,m;\varepsilon)-\frac{2r_{0}^{1-m}}{\tau_{0}(m-1)}+O(1)\varepsilon^{\frac{10m+1}{12m}},
\end{align*}
then we obtain that for $\alpha=1,2,$
\begin{align}\label{LMT01}
a_{11}^{\alpha\alpha}=&\mathcal{L}_{2}^{\alpha}\mathcal{M}_{0}\rho_{0}(2,m;\varepsilon)+\mathcal{K}_{m}^{\ast\alpha}+O(1)\varepsilon^{\frac{1}{12m}},
\end{align}
where $\mathcal{M}_{0}$ is defined by \eqref{WEN} with $d=2$, and
\begin{align}\label{LKM}
\mathcal{K}_{m}^{\ast\alpha}=\widetilde{M}_{2}^{\ast\alpha}-\frac{2r_{0}^{1-m}\mathcal{L}_{2}^{\alpha}}{\tau_{0}(m-1)}.
\end{align}

{\bf Step 2.} Analogous to \eqref{QZH001}--\eqref{QZH002}, we deduce that for $\alpha=3$,
\begin{align*}
a_{11}^{33}=&\mathcal{L}_{2}^{3}\left(\int_{\varepsilon^{\frac{1}{12m}}<|x_{1}|<r_{0}}\frac{x_{1}^{2}}{h_{1}(x_{1})-h_{2}(x_{1})}+\int_{|x_{1}|<\varepsilon^{\frac{1}{12m}}}\frac{x_{1}^{2}}{\varepsilon+h_{1}(x_{1})-h_{2}(x_{1})}\right)\nonumber\\
&+M_{2}^{\ast3}+O(1)\varepsilon^{\frac{1}{12m}},
\end{align*}
where
\begin{align*}
M_{2}^{\ast3}=&\int_{\Omega^{\ast}\setminus\Omega^{\ast}_{r_{0}}}(\mathbb{C}^{0}e(\bar{u}_{1}^{\ast3}),e(\bar{u}_{1}^{\ast3}))+\int_{\Omega_{r_{0}}^{\ast}}(\mathbb{C}^{0}e(v_{1}^{\ast3}-\bar{u}_{1}^{\ast3}),e(v_{1}^{\ast3}-\bar{u}_{1}^{\ast3}))\notag\\
&+\int_{\Omega_{r_{0}}^{\ast}}\big[2(\mathbb{C}^{0}e(v_{1}^{\ast3}-\bar{u}_{1}^{\ast3}),e(\bar{u}_{1}^{\ast3}))+2(\mathbb{C}^{0}e(\psi_{3}\bar{v}^{\ast}),e(\mathcal{F}_{3}^{\ast}))+(\mathbb{C}^{0}e(\mathcal{F}_{3}^{\ast}),e(\mathcal{F}_{3}^{\ast}))\big]\\
&+
\int_{\Omega_{r_{0}}^{\ast}}\big[\mu(x_{1}^{2}+x_{2}^{2})(\partial_{x_{1}}\bar{v})^{2}+\mu(x_{2}\partial_{x_{2}}\bar{v})^{2}+(\lambda+\mu)(x_{2}\partial_{x_{1}}\bar{v})^{2}\notag\\
&\quad\quad\quad\quad-2(\lambda+\mu)x_{1}x_{2}\partial_{x_{1}}\bar{v}\partial_{x_{2}}\bar{v}\big].
\end{align*}

On one hand, if $2\leq m<3$, then
\begin{align*}
a_{11}^{33}=&\mathcal{L}_{2}^{3}\left(\int_{|x_{1}|<r_{0}}\frac{x_{1}^{2}}{h_{1}-h_{2}}-\int_{|x_{1}|<\varepsilon^{\bar{\gamma}}}\frac{\varepsilon x_{1}^{2}}{(h_{1}-h_{2})(\varepsilon+h_{1}-h_{2})}\right)\nonumber\\
&+M_{2}^{\ast3}+O(1)\varepsilon^{\frac{1}{12m}}\notag\\
=&\mathcal{L}_{2}^{3}\int_{\Omega_{R}^{\ast}}|x_{1}\partial_{x_{2}}\bar{v}^{\ast}|^{2}+M_{2}^{\ast3}+O(1)\varepsilon^{\frac{3-m}{12m}}\\
=&a_{11}^{\ast33}+O(1)\varepsilon^{\frac{3-m}{12m}}.
\end{align*}

On the other hand, if $m\geq3$, using \eqref{ZCZ009}, we obtain
\begin{align*}
\int_{\varepsilon^{\frac{1}{12m}}<|x_{1}|<r_{0}}\left(\frac{x_{1}^{2}}{h_{1}-h_{2}}-\frac{x_{1}^{2}}{\tau_{0}|x_{1}|^{m}}\right)dx_{1}=\int_{\varepsilon^{\frac{1}{12m}}<|x_{1}|<r_{0}}O(x_{1}^{2})\,dx_{1}=C^{\ast}_{2}+O(1)\varepsilon^{\frac{1}{4m}},
\end{align*}
where $C^{\ast}_{2}$ depends on $m,\tau_{0},r_{0}$, but not on $\varepsilon$. Then
\begin{align*}
\int_{\varepsilon^{\frac{1}{12m}}<|x_{1}|<r_{0}}\frac{x_{1}^{2}}{h_{1}-h_{2}}=&\int_{\varepsilon^{\frac{1}{12m}}<|x_{1}|<r_{0}}\frac{x_{1}^{2}}{\tau_{0}|x_{1}|^{m}}+C^{\ast}_{2}+O(1)\varepsilon^{\frac{1}{12m}},
\end{align*}
Similarly, we have
\begin{align*}
\int_{|x_{1}|<\varepsilon^{\frac{1}{12m}}}\frac{x_{1}^{2}}{\varepsilon+h_{1}-h_{2}}=&\int_{|x_{1}|<\varepsilon^{\frac{1}{12m}}}\frac{x_{1}^{2}}{\varepsilon+\tau_{0}|x_{1}|^{m}}+O(1)\varepsilon^{\frac{1}{4m}}.
\end{align*}
Then $a_{11}^{33}$ becomes
\begin{align*}
a_{11}^{33}=&\mathcal{L}_{2}^{3}\left(\int_{\varepsilon^{\frac{1}{12m}}<|x_{1}|<r_{0}}\frac{x_{1}^{2}}{\tau_{0}|x_{1}|^{m}}+\int_{|x_{1}|<\varepsilon^{\frac{1}{12m}}}\frac{x_{1}^{2}}{\varepsilon+\tau_{0}|x_{1}|^{m}}\right)+\widetilde{M}^{\ast3}_{2}+O(1)\varepsilon^{\frac{1}{12m}},
\end{align*}
where $\widetilde{M}^{\ast3}_{2}=M^{\ast3}_{2}+\mathcal{L}_{2}^{3}C^{\ast}_{2}$. A direct calculation yields that for $m=3$,
\begin{align*}
&\int_{\varepsilon^{\frac{1}{36}}<|x_{1}|<r_{0}}\frac{x_{1}^{2}}{\tau_{0}|x_{1}|^{3}}+\int_{|x_{1}|<\varepsilon^{\frac{1}{36}}}\frac{x_{1}^{2}}{\varepsilon+\tau_{0}|x_{1}|^{3}}\notag\\
=&\int_{|x_{1}|<r_{0}}\frac{x_{1}^{2}}{\varepsilon+\tau_{0}|x_{1}|^{3}}+\int_{\varepsilon^{\frac{1}{36}}<|x_{1}|<r_{0}}\frac{\varepsilon x_{1}^{2}}{\tau_{0}|x_{1}|^{3}(\varepsilon+\tau_{0}|x_{1}|^{3})}\notag\\
=&\frac{2}{3\tau_{0}}|\ln\varepsilon|+\frac{2(\ln\tau_{0}+3\ln r_{0})}{3\tau_{0}}+O(1)\varepsilon^{\frac{11}{12}},
\end{align*}
and, for $m>3$,
\begin{align*}
&\int_{\varepsilon^{\frac{1}{12m}}<|x_{1}|<r_{0}}\frac{x_{1}^{2}}{\tau_{0}|x_{1}|^{m}}+\int_{|x_{1}|<\varepsilon^{\frac{1}{12m}}}\frac{x_{1}^{2}}{\varepsilon+\tau_{0}|x_{1}|^{m}}\notag\\
=&\int_{-\infty}^{+\infty}\frac{x_{1}^{2}}{\varepsilon+\tau_{0}|x_{1}|^{m}}-\int_{|x_{1}|>r_{0}}\frac{x_{1}^{2}}{\tau_{0}|x_{1}|^{m}}+\int_{|x_{1}|>\varepsilon^{\frac{1}{12m}}}\frac{\varepsilon x_{1}^{2}}{\tau_{0}|x_{1}|^{m}(\varepsilon+\tau_{0}|x_{1}|^{m})}\notag\\
=&\frac{2\Gamma[\frac{3}{m}]}{m\tau_{0}^{\frac{3}{m}}}\rho_{2}(2,m;\varepsilon)-\frac{2r_{0}^{3-m}}{\tau_{0}(m-3)}+O(1)\varepsilon^{\frac{10m+3}{12m}}.
\end{align*}
Then we obtain that for $m\geq3$,
\begin{align}\label{LMT0111}
a_{11}^{33}=&\mathcal{L}_{2}^{3}\mathcal{M}_{2}\rho_{2}(2,m;\varepsilon)+\mathcal{K}_{m}^{\ast3}+O(1)\varepsilon^{\frac{1}{12m}},
\end{align}
where $\mathcal{M}_{2}$ is defined by \eqref{WEN} in the case of $d=2$, and
\begin{align}\label{LKM001}
\mathcal{K}_{m}^{\ast3}=&
\begin{cases}
\frac{2(\ln\tau_{0}+3\ln r_{0})\mathcal{L}_{2}^{3}}{3\tau_{0}}+\widetilde{M}^{\ast3}_{2},&m=3,\\
\frac{2r_{0}^{3-m}\mathcal{L}_{2}^{3}}{\tau_{0}(3-m)}+\widetilde{M}^{\ast3}_{2},&m>3.
\end{cases}
\end{align}

{\bf Step 3.} We now claim that the constant $\mathcal{K}_{m}^{\ast\alpha}$ captured in \eqref{LKM} and \eqref{LKM001} is independent of the length parameter $r_{0}$ of the narrow region. If not, suppose that there exist $\mathcal{K}_{m}^{\ast\alpha}(r_{1}^{\ast})$ and $\mathcal{K}_{m}^{\ast\alpha}(r_{2}^{\ast})$, $r_{i}^{\ast}>0,\,i=1,2,\,r_{1}^{\ast}\neq r_{2}^{\ast}$, both independent of $\varepsilon$, such that \eqref{LMT01} and \eqref{LMT0111} hold. Then, we arrive at
\begin{align*}
\mathcal{K}_{m}^{\ast\alpha}(r_{1}^{\ast})-\mathcal{K}_{m}^{\ast\alpha}(r_{2}^{\ast})=O(1)\varepsilon^{\frac{1}{12m}},
\end{align*}
which means that $\mathcal{K}_{m}^{\ast\alpha}(r_{1}^{\ast})=\mathcal{K}_{m}^{\ast\alpha}(r_{2}^{\ast})$.

\end{proof}

\begin{proof}[Proof of Example \ref{coro00389}]
Denote
\begin{align}\label{ZWZWZW0019}
\mathcal{G}^{\ast\alpha}_{m}=\frac{\mathcal{K}^{\ast\alpha}_{m}}{\mathcal{L}^{\alpha}_{2}\mathcal{M}_{0}},\;\,\mathrm{for}\;\alpha=1,2,\;m\geq2,\quad \mathcal{G}^{\ast3}_{m}=\frac{\mathcal{K}^{\ast3}_{m}}{\mathcal{L}^{3}_{2}\mathcal{M}_{2}},\;\,\mathrm{for}\;m\geq3.
\end{align}
Similarly as in \eqref{AGU01}--\eqref{AKU02}, it follows from Lemma \ref{lem0066666} that for $\alpha=1,2$, if $m\geq2$,
\begin{align}\label{AZW01}
\frac{1}{a_{11}^{\alpha\alpha}}=&\frac{\rho^{-1}_{0}(2,m;\varepsilon)}{\mathcal{L}_{2}^{\alpha}\mathcal{M}_{0}}\frac{1}{1-\frac{\mathcal{L}_{2}^{\alpha}\mathcal{M}_{0}-\rho^{-1}_{0}(2,m;\varepsilon)a_{11}^{\alpha\alpha}}{\mathcal{L}_{2}^{\alpha}\mathcal{M}_{0}}}\notag\\
=&\frac{\rho^{-1}_{0}(2,m;\varepsilon)}{\mathcal{L}_{2}^{\alpha}\mathcal{M}_{0}}\frac{1}{1+\mathcal{G}^{\ast\alpha}_{m}\rho^{-1}_{0}(2,m;\varepsilon)+O(\varepsilon^{\frac{1}{12m}}\rho_{0}^{-1}(2,m;\varepsilon))}\notag\\
=&\frac{\varepsilon^{\frac{m-1}{m}}}{\mathcal{L}_{2}^{\alpha}\mathcal{M}_{0}}\frac{1+O(\varepsilon^{\frac{12m-11}{12m}})}{1+\mathcal{G}^{\ast\alpha}_{m}\varepsilon^{\frac{m-1}{m}}},
\end{align}
and for $\alpha=3$, if $m\geq 3$,
\begin{align}\label{AZW02}
\frac{1}{a_{11}^{33}}=&\frac{\rho^{-1}_{2}(2,m;\varepsilon)}{\mathcal{L}_{2}^{3}\mathcal{M}_{2}}\frac{1}{1-\frac{\mathcal{L}_{2}^{3}\mathcal{M}_{2}-\rho^{-1}_{2}(2,m;\varepsilon)a_{11}^{33}}{\mathcal{L}_{2}^{3}\mathcal{M}_{2}}}\notag\\
=&\frac{\rho^{-1}_{2}(2,m;\varepsilon)}{\mathcal{L}_{2}^{3}\mathcal{M}_{2}}\frac{1}{1+\mathcal{G}^{\ast3}_{m}\rho^{-1}_{2}(2,m;\varepsilon)+O(\varepsilon^{\frac{1}{12m}}\rho_{2}^{-1}(2,m;\varepsilon))}\notag\\
=&
\begin{cases}
\frac{1}{\mathcal{L}_{2}^{3}\mathcal{M}_{2}}\frac{1+O(\varepsilon^{\frac{1}{12m}}|\ln\varepsilon|)}{|\ln\varepsilon|+\mathcal{G}^{\ast3}_{3}},&m=3,\\
\frac{\varepsilon^{\frac{m-3}{m}}}{\mathcal{L}_{2}^{3}\mathcal{M}_{2}}\frac{1+O(\varepsilon^{\frac{12m-35}{12m}})}{1+\mathcal{G}^{\ast3}_{m}\varepsilon^{\frac{m-3}{m}}},&m>3.
\end{cases}
\end{align}

On one hand, for $m\geq3$, combining \eqref{PLA001}, \eqref{AGU01} and \eqref{AZW01}--\eqref{AZW02}, it follows from Cramer's rule that
\begin{align*}
C_{1}^{\alpha}-C_{2}^{\alpha}=&\frac{\prod\limits_{i\neq\alpha}^{3}a_{11}^{ii}\det\mathbb{F}^{\alpha}}{\prod\limits_{i=1}^{\frac{d(d+1)}{2}}a_{11}^{ii}\det \mathbb{D}}(1+O(\rho_{2}^{-1}(2,m;\varepsilon)))\\
=&
\begin{cases}
\frac{\det\mathbb{F}^{\ast\alpha}}{\det \mathbb{D}^{\ast}}\frac{\varepsilon^{\frac{2}{3}}}{\mathcal{L}_{2}^{\alpha}\mathcal{M}_{0}}\frac{1+O(|\ln\varepsilon|^{-1})}{1+\mathcal{G}^{\ast\alpha}_{3}\varepsilon^{\frac{2}{3}}},&\alpha=1,2,\,m=3,\\
\frac{\det\mathbb{F}^{\ast\alpha}}{\det \mathbb{D}^{\ast}}\frac{\varepsilon^{\frac{m-1}{m}}}{\mathcal{L}_{2}^{\alpha}\mathcal{M}_{0}}\frac{1+O(\varepsilon^{\min\{\frac{1}{4},\frac{m-3}{m}\}})}{1+\mathcal{G}^{\ast\alpha}_{m}\varepsilon^{\frac{m-1}{m}}},&\alpha=1,2,\,m>3,\\
\frac{\det \mathbb{F}^{\ast3}}{\det \mathbb{D}^{\ast}}\frac{1}{\mathcal{L}_{2}^{3}\mathcal{M}_{2}}\frac{1+O(|\ln\varepsilon|^{-1})}{|\ln\varepsilon|+\mathcal{G}^{\ast3}_{3}},&\alpha=3,\,m=3,\\
\frac{\det \mathbb{F}^{\ast3}}{\det \mathbb{D}^{\ast}}\frac{\varepsilon^{\frac{m-3}{m}}}{\mathcal{L}_{2}^{3}\mathcal{M}_{2}}\frac{1+O(\varepsilon^{\min\{\frac{1}{4},\frac{12m-35}{12m},\frac{m-3}{m}\}})}{1+\mathcal{G}^{\ast3}_{m}\varepsilon^{\frac{m-3}{m}}},&\alpha=3,\,m>3.
\end{cases}
\end{align*}

On the other hand, for $2\leq m<3$, utilizing \eqref{PLA001}, \eqref{QGH01} and \eqref{AZW01}, we see from Cramer's rule that for $\alpha=1,2$,
\begin{align*}
C_{1}^{\alpha}-C_{2}^{\alpha}=&\frac{\prod\limits_{i\neq\alpha}^{2}a_{11}^{ii}\det\mathbb{F}_{1}^{\alpha}}{\prod\limits_{i=1}^{2}a_{11}^{ii}\det \mathbb{F}_{0}}(1+O(\rho_{0}^{-1}(2,m;\varepsilon)|\ln\varepsilon|))\\
=&\frac{\det\mathbb{F}_{1}^{\ast\alpha}}{\det \mathbb{F}_{0}^{\ast}}\frac{\varepsilon^{\frac{m-1}{m}}}{\mathcal{L}_{2}^{\alpha}\mathcal{M}_{0}}\frac{1+O(\varepsilon^{\frac{3-m}{12m}})}{1+\mathcal{G}^{\ast\alpha}_{m}\varepsilon^{\frac{m-1}{m}}},
\end{align*}
and for $\alpha=3$, in view of \eqref{DBU01}, we have
\begin{align*}
C_{1}^{3}-C_{2}^{3}=&\frac{\det\mathbb{F}_{2}^{3}}{\det \mathbb{F}_{0}}(1+O(\rho_{0}^{-1}(2,m;\varepsilon)))=\frac{\det\mathbb{F}_{2}^{\ast3}}{\det \mathbb{F}_{0}^{\ast}}(1+O(\varepsilon^{\frac{3-m}{12m}})).
\end{align*}

Therefore, it follows from \eqref{Decom002}, Corollaries \ref{thm86}--\ref{coro00z}, Lemma \ref{PAK001} and Theorem \ref{OMG123} that Example \ref{coro00389} holds.

\end{proof}

\section{Appendix:\,The proof of Proposition \ref{thm8698}}
In the following, we will apply the iterate technique created in \cite{LLBY2014} to prove Proposition \ref{thm8698}. Without loss of generality, we let $\phi=0$ on $\partial D_{2}$ in (\ref{P2.008}) for the convenience of presentation. Note that the solution of (\ref{P2.008}) can be split into
$$v=\sum^{d}_{i=1}v_{i},$$
where $v_{i}=(v_{i}^{1},v_{i}^{2},...,v_{i}^{d})^{T}$, $i=1,2,...,d$, with $v_{i}^{j}=0$ for $j\neq i$, and $v_{i}$ solves the following boundary value problem
\begin{align}\label{P2.010}
\begin{cases}
  \mathcal{L}_{\lambda,\mu}v_{i}:=\nabla\cdot(\mathbb{C}^0e(v_{i}))=0,\quad&
\hbox{in}\  \Omega,  \\
v_{i}=( 0,...,0,\psi^{i}, 0,...,0)^{T},\ &\hbox{on}\ \partial{D}_{1},\\
v_{i}=0,&\hbox{on} \ \partial{D}.
\end{cases}
\end{align}
Then
\begin{align*}
\nabla v=\sum^{d}_{i=1}\nabla v_{i}.
\end{align*}

For $i=1,...,d-1$, $x\in\Omega_{2R}$, we denote
\begin{align*}
\tilde{v}_{i}=&\psi^{i}(x',\varepsilon+h_{1}(x'))\bar{v}e_{i}+\frac{\lambda+\mu}{\lambda+2\mu}f(\bar{v})\psi^{i}(x',\varepsilon+h_{1}(x'))\partial_{x_{i}}\delta\,e_{d},
\end{align*}
and
\begin{align*}
\tilde{v}_{d}=&\psi^{d}(x',\varepsilon+h_{1}(x'))\bar{v}e_{d}+\frac{\lambda+\mu}{\mu}f(\bar{v})\psi^{d}(x',\varepsilon+h_{1}(x'))\sum^{d-1}_{i=1}\partial_{x_{i}}\delta\,e_{i}.
\end{align*}
Then
\begin{align*}
\tilde{v}=\sum^{d}_{i=1}\tilde{v}_{i},
\end{align*}
where $\tilde{v}$ is defined in \eqref{CAN01} with $\phi=0$ on $\partial D_{2}$. A direct calculation gives that for $i=1,2,...,d$, $x\in\Omega_{2R}$,
\begin{align}
|\mathcal{L}_{\lambda,\mu}\tilde{v}_{i}|\leq&\frac{C|\psi^{i}(x',\varepsilon+h_{1}(x'))|}{\delta^{2/m}}+\frac{C|\nabla_{x'}\psi^{i}(x',\varepsilon+h_{1}(x'))|}{\delta}+C\|\psi^{i}\|_{C^{2}(\partial D_{1})}.\label{QQ.103}
\end{align}

For $x\in\Omega_{2R}$, denote
\begin{equation*}
w_i:=v_i-\tilde{v}_i,\quad i=1,2,...,d.
\end{equation*}
Then $w_{i}$ satisfies
\begin{align}\label{LHM001}
\begin{cases}
\mathcal{L}_{\lambda,\mu}w_{i}=-\mathcal{L}_{\lambda,\mu}\tilde{v}_{i},&
\hbox{in}\  \Omega_{2R},  \\
w_{i}=0, \quad&\hbox{on} \ \Gamma^{\pm}_{2R}.
\end{cases}
\end{align}

\noindent{\bf Step 1.}
Let $v_i\in H^1(\Omega;\mathbb{R}^{d})$ be a weak solution of (\ref{P2.010}). Then, for $i=1,2,...,d$,
\begin{align}\label{lem2.2equ}
\int_{\Omega_{2R}}|\nabla w_i|^2dx\leq C\|\psi^{i}\|_{C^{2}(\partial D_{1})}^{2}.
\end{align}

First of all, we extend $\psi\in C^{2}(\partial D_{1};\mathbb{R}^{d})$ to $\psi\in C^{2}(\overline{\Omega};\mathbb{R}^{d})$ so that for $i=1,2,...,d,$ $\|\psi^{i}\|_{C^{2}(\overline{\Omega\setminus\Omega_{R}})}\leq C\|\psi^{i}\|_{C^{2}(\partial D_{1})}$. Construct a cutoff function $\rho\in C^{2}(\overline{\Omega})$ satisfying that $0\leq\rho\leq1$, $|\nabla\rho|\leq C$ in $\overline{\Omega}$, and
\begin{align}\label{Le2.021}
\rho=1\;\,\mathrm{in}\;\,\Omega_{\frac{3}{2}R},\quad\rho=0\;\,\mathrm{in}\;\,\overline{\Omega}\setminus\Omega_{2R}.
\end{align}
For $x\in\Omega$, define
\begin{align*}
\hat{v}_{i}(x)=\left(0,...,0,[\rho(x)\psi^{i}(x',\varepsilon+h_{1}(x'))+(1-\rho(x))\psi^{i}(x)]\bar{v}(x),0,...,0\right)^{T}.
\end{align*}
Then we see
\begin{align*}
\hat{v}_{i}(x)=(0,...,0,\psi^{i}(x',\varepsilon+h_{1}(x'))\bar{v}(x),0,...,0)^{T},\quad\;\,\mathrm{in}\;\,\Omega_{R},
\end{align*}
and in view of (\ref{zh001}) and \eqref{Le2.021},
\begin{align}\label{AHMR001}
\|\hat{v}_{i}\|_{C^{2}(\Omega\setminus\Omega_{R})}\leq C\|\psi^{i}\|_{C^{2}(\partial D_{1})}.
\end{align}
For $i=1,2,...,d$, denote $\hat{w}_{i}:=v_{i}-\hat{v}_{i}$ in $\Omega$. Therefore, $\hat{w}_{i}$ solves
\begin{align}\label{ESGH01}
\begin{cases}
\mathcal{L}_{\lambda,\mu}\hat{w}_{i}=-\mathcal{L}_{\lambda,\mu}\hat{v}_{i},&
\hbox{in}\  \Omega,  \\
\hat{w}_{i}=0, \quad&\hbox{on} \ \partial\Omega.
\end{cases}
\end{align}
%
%A direct calculation yields that for $x\in\Omega_{R}$,
%\begin{align}\label{Le2.0233}
%\frac{|\psi^{i}(x',\varepsilon+h_{1}(x'))|}{C(\varepsilon+d^{m}(x'))}\leq|\nabla\hat{v}_{i}|\leq\frac{C|\psi^{i}(x',\varepsilon+h_{1}(x'))|}{\varepsilon+d^{m}(x')}+C\|\nabla\psi^{i}\|_{L^{\infty}(\partial D_{1})}.
%\end{align}
A straightforward computation yields that for $j=1,...,d-1$, $x\in\Omega_{R}$,
\begin{align}
|\partial_{x_{j}}\hat{v}_{i}|\leq&C|\psi^{i}(x',\varepsilon+h_{1}(x'))|\delta^{-1/m}+\|\psi^{i}\|_{C^{1}(\partial D_{1})},\label{QQ.10101}\\
|\partial_{x_{d}}\hat{v}_{i}|=&|\psi^{i}(x',\varepsilon+h_{1}(x'))|\delta^{-1},\quad\partial_{x_{d}x_{d}}\hat{v}_{i}=0.\label{QQ.101}
\end{align}
Multiplying equation (\ref{ESGH01}) by $\hat{w}_{i}$, it follows from integration by parts that
\begin{align}\label{integrationbypart}
\int_{\Omega}\left(\mathbb{C}^0e(\hat{w}_{i}),e(\hat{w}_{i})\right)dx=\int_{\Omega}\hat{w}_{i}\left(\mathcal{L}_{\lambda,\mu}\hat{v}_{i}\right)dx.
\end{align}

Using the Poincar\'{e} inequality, we have
\begin{align}\label{poincare_inequality}
\|\hat{w}_{i}\|_{L^2(\Omega\setminus\Omega_R)}\leq\,C\|\nabla \hat{w}_{i}\|_{L^2(\Omega\setminus\Omega_R)},
\end{align}
while, in light of the Sobolev trace embedding theorem,
\begin{align}\label{trace}
\int\limits_{\scriptstyle |x'|={R},\atop\scriptstyle
h_{2}(x')<x_{d}<{\varepsilon}+h_1(x')\hfill}|\hat{w}_{i}|\,dx\leq\ C \left(\int_{\Omega\setminus\Omega_{R}}|\nabla \hat{w}_{i}|^2dx\right)^{\frac{1}{2}},
\end{align}
where the constant $C$ is independent of $\varepsilon$. Utilizing (\ref{QQ.10101}), we get
\begin{align}
\int_{\Omega_{R}}|\nabla_{x'}\hat{v}_{i}|^2dx\leq& C\int_{|x'|<R}\delta(x')\left(\frac{|\psi^{i}(x',\varepsilon+h_{1}(x'))|^{2}}{\delta^{2/m}}+\|\psi^{i}\|_{C^{1}(\partial D_{1})}^{2}\right)dx'\notag\\
\leq&C\|\psi^{i}\|_{C^{1}(\partial{D}_{1})}^{2}.\label{nablax'tildeu}
\end{align}

Combining \eqref{ellip}, \eqref{AHMR001}, \eqref{integrationbypart}--\eqref{poincare_inequality} and the first Korn's inequality, we deduce that
\begin{align}
\int_{\Omega}|\nabla\hat{w}_{i}|^2dx\leq &\,2\int_{\Omega}|e(\hat{w}_{i})|^2dx\nonumber\\
\leq&\,C\left|\int_{\Omega_R}\hat{w}_{i}(\mathcal{L}_{\lambda,\mu}\hat{v}_{i})dx\right|+C\left|\int_{\Omega\setminus\Omega_R}\hat{w}_{i}(\mathcal{L}_{\lambda,\mu}\hat{v}_{i})dx\right|\nonumber\\
\leq&\,C\left|\int_{\Omega_R}\hat{w}_{i}(\mathcal{L}_{\lambda,\mu}\hat{v}_{i})dx\right|+C\|\psi^{i}\|_{C^2(\partial{D}_{1})}\int_{\Omega\setminus\Omega_R}|\hat{w}_{i}|dx\nonumber\\
\leq&\,C\left|\int_{\Omega_R}\hat{w}_{i}(\mathcal{L}_{\lambda,\mu}\hat{v}_{i})dx\right|+C\|\psi^{i}\|_{C^2(\partial{D}_{1})}\|\nabla \hat{w}_{i}\|_{L^{2}(\Omega\setminus\Omega_{R})},\nonumber
\end{align}
while, in light of (\ref{QQ.101}) and (\ref{trace})--(\ref{nablax'tildeu}), it follows from integration by parts that
\begin{align}
&\left|\int_{\Omega_R}\hat{w}_{i}(\mathcal{L}_{\lambda,\mu}\hat{v}_{i})dx\right|
\leq\,C\sum_{k+j<2d}\left|\int_{\Omega_{R}}\hat{w}_{i}\partial_{x_{k}x_{j}}\hat{v}_{i}dx\right|\nonumber\\
\leq&C\int_{\Omega_{R}}|\nabla \hat{w}_{i}\|\nabla_{x'}\hat{v}_{i}|dx+\int\limits_{\scriptstyle |x'|={R},\atop\scriptstyle
h_{2}(x')<x_{d}<\varepsilon+h_1(x')\hfill}C|\nabla_{x'}\hat{v}_{i}\|\hat{w}_{i}|dx\nonumber \\
\leq&C\|\nabla \hat{w}_{i}\|_{L^{2}(\Omega_{R})}\|\nabla_{x'}\hat{v}_{i}\|_{L^{2}(\Omega_{R})}+C\|\psi^{i}\|_{C^1(\partial{D}_{1})}\|\nabla \hat{w}_{i}\|_{L^{2}(\Omega\setminus\Omega_{R})}\nonumber\\
\leq&C\|\psi^{i}\|_{C^{1}(\partial{D}_{1})}\|\nabla \hat{w}_{i}\|_{L^{2}(\Omega)}.\nonumber
\end{align}
Thus,
\begin{align*}
\|\nabla \hat{w}_{i}\|_{L^{2}(\Omega)}\leq  C\|\psi^{i}\|_{C^{2}(\partial{D}_{1})}.
\end{align*}
Since
\begin{align*}
w_{i}=\hat{w}_{i}-
\begin{cases}
\frac{\lambda+\mu}{\lambda+2\mu}f(\bar{v})\psi^{i}(x',\varepsilon+h_{1}(x'))\partial_{x_{i}}\delta\,e_{d},&i=1,...,d-1,\\
\frac{\lambda+\mu}{\mu}f(\bar{v})\psi^{d}(x',\varepsilon+h_{1}(x'))\sum^{d-1}_{i=1}\partial_{x_{i}}\delta\,e_{i},&i=d,
\end{cases}\quad \mathrm{in}\;\Omega_{2R},
\end{align*}
we obtain that \eqref{lem2.2equ} holds.

\noindent{\bf Step 2.}
Proof of
\begin{align}\label{step2}
 \int_{\Omega_\delta(z')}|\nabla w_{i}|^2dx\leq& C\delta^{d}(|\psi^{i}(z',\varepsilon+h_{1}(z'))|^{2}\delta^{2-4/m}+|\nabla_{x'}\psi^{i}(z',\varepsilon+h_{1}(z'))|^{2})\notag\\
&+C\delta^{d+2}\|\psi^{i}\|_{C^{2}(\partial D_{1})}^{2},\quad i=1,2,...,d.
\end{align}
For $0<t<s<R$ and $|z'|\leq R$, pick a smooth cutoff function $\eta\in C^{2}(\Omega_{2R})$ such that $\eta(x')=1$ if $|x'-z'|<t$, $\eta(x')=0$ if $|x'-z'|>s$, $0\leq\eta(x')\leq1$ if $t\leq|x'-z'|\leq s$, and $|\nabla_{x'}\eta(x')|\leq\frac{2}{s-t}$. Multiplying equation \eqref{LHM001} by $w_{i}\eta^{2}$ and utilizing integration by parts, we deduce
\begin{align}\label{LFN001}
\int_{\Omega_{s}(z')}\left(\mathbb{C}^0e(w_{i}),e(w_{i}\eta^{2})\right)dx=\int_{\Omega_{s}(z')}w_{i}\eta^{2}\left(\mathcal{L}_{\lambda,\mu}\tilde{v}_{i}\right)dx.
\end{align}
On one hand, by using \eqref{ITERA}, \eqref{ellip} and the first Korn's inequality, we obtain
\begin{align}\label{MAH0199}
\int_{\Omega_{s}(z')}\left(\mathbb{C}^0e(w_{i}),e(w_{i}\eta^{2})\right)dx\geq\frac{1}{C}\int_{\Omega_{s}(z')}|\eta\nabla w_{i}|^{2}-C\int_{\Omega_{s}(z')}|\nabla\eta|^{2}|w_{i}|^{2}.
\end{align}
On the other hand, for the right hand side of \eqref{LFN001}, it follows from H\"{o}lder inequality and Cauchy inequality that
\begin{align*}
\left|\int_{\Omega_{s}(z')}w_{i}\eta^{2}\left(\mathcal{L}_{\lambda,\mu}\tilde{v}_{i}\right)dx\right|\leq \frac{C}{(s-t)^{2}}\int_{\Omega_{s}(z')}|w_{i}|^{2}dx+C(s-t)^{2}\int_{\Omega_{s}(z')}|\mathcal{L}_{\lambda,\mu}\tilde{v}_{i}|^{2}dx.
\end{align*}
This, together with \eqref{LFN001}--\eqref{MAH0199}, yields the iteration formula as follows:
\begin{align*}
\int_{\Omega_{t}(z')}|\nabla w_{i}|^{2}dx\leq\frac{C}{(s-t)^{2}}\int_{\Omega_{s}(z')}|w_{i}|^{2}dx+C(s-t)^{2}\int_{\Omega_{s}(z')}|\mathcal{L}_{\lambda,\mu}\tilde{v}_{i}|^{2}dx.
\end{align*}

For $|z'|\leq R$, $\delta<s\leq\vartheta(\tau,\kappa_{1})\delta^{1/m}$, $\vartheta(\tau,\kappa_{1})=\frac{1}{2^{m+2}\kappa_{1}\max\{1,\tau^{1/m-1}\}}$, it follows from conditions ({\bf{H1}}) and ({\bf{H2}}) that for  $(x',x_{d})\in\Omega_{s}(z')$,
\begin{align}\label{KHW01}
|\delta(x')-\delta(z')|\leq&|h_{1}(x')-h_{1}(z')|+|h_{2}(x')-h_{2}(z')|\notag\\
\leq&(|\nabla_{x'}h_{1}(x'_{\theta_{1}})|+|\nabla_{x'}h_{2}(x'_{\theta_{2}})|)|x'-z'|\notag\\
\leq&\kappa_{1}|x'-z'|(|x'_{\theta_{1}}|^{m-1}+|x'_{\theta_{2}}|^{m-1})\notag\\
\leq&2^{m-1}\kappa_{1}s(s^{m-1}+|z'|^{m-1})\notag\\
\leq&\frac{\delta(z')}{2}.
\end{align}
Then, we have
\begin{align}\label{QWN001}
\frac{1}{2}\delta(z')\leq\delta(x')\leq\frac{3}{2}\delta(z'),\quad\mathrm{in}\;\Omega_{s}(z').
\end{align}
Utilizing (\ref{QQ.103}) and \eqref{QWN001}, we obtain
\begin{align}\label{AQ3.037}
\int_{\Omega_{s}(z')}|\mathcal{L}_{\lambda,\mu}\tilde{v}_{i}|^{2}\leq& C\delta^{-1}s^{d-1}(|\psi^{i}(z',\varepsilon+h_{1}(z'))|^{2}\delta^{2-4/m}+|\nabla_{x'}\psi^{i}(z',\varepsilon+h_{1}(z'))|^{2})\notag\\
&+C\delta^{-1}s^{d+1}\|\psi^{i}\|_{C^{2}(\partial D_{1})}^{2},
\end{align}
while, in the light of $w_{i}=0$ on $\Gamma^{-}_{R}$,
\begin{align}\label{AQ3.038}
\int_{\Omega_{s}(z')}|w_{i}|^{2}\leq C\delta^{2}\int_{\Omega_{s}(z')}|\nabla w_{i}|^{2}.
\end{align}
Define
$$F(t):=\int_{\Omega_{t}(z')}|\nabla w_{i}|^{2}.$$
Then it follows from (\ref{AQ3.037})--(\ref{AQ3.038}) that
\begin{align*}
F(t)\leq& \left(\frac{c\delta}{s-t}\right)^2F(s)+C(s-t)^2s^{d+1}\delta^{-1}\|\psi^{i}\|_{C^{2}(\partial D_{1})}^{2}\notag\\
&+C(s-t)^2\delta^{-1}s^{d-1}(|\psi^{i}(z',\varepsilon+h_{1}(z'))|^{2}\delta^{2-4/m}+|\nabla_{x'}\psi^{i}(z',\varepsilon+h_{1}(z'))|^{2}).
\end{align*}
This, in combination with $s=t_{j+1}$, $t=t_{j}$, $t_{j}=\delta+2cj\delta,\;j=0,1,2,...,\left[\frac{\vartheta(\tau,\kappa_{1})}{4c\delta^{1-1/m}}\right]+1$, yields that
\begin{align*}
F(t_{j})\leq&\frac{1}{4}F(t_{j+1})+C(j+1)^{d+1}\delta^{d+2}\|\psi^{i}\|_{C^{2}(\partial D_{1})}^{2}\notag\\
&+C(j+1)^{d-1}\delta^{d}(|\psi^{i}(z',\varepsilon+h_{1}(z'))|^{2}\delta^{2-4/m}+|\nabla_{x'}(\psi^{i}(z',\varepsilon+h_{1}(z')))|^{2}).
\end{align*}
Consequently, it follows from $\left[\frac{\vartheta(\tau,\kappa_{1})}{4c\delta^{1-1/m}}\right]+1$ iterations and (\ref{lem2.2equ}) that for a sufficiently small $\varepsilon>0$,
\begin{align*}
F(t_{0})\leq C\delta^{d}(|\psi^{i}(z',\varepsilon+h_{1}(z'))|^{2}\delta^{2-4/m}+|\nabla_{x'}\psi^{i}(z',\varepsilon+h_{1}(z'))|^{2}+\delta^{2}\|\psi^{i}\|_{C^{2}(\partial D_{1})}^{2}).
\end{align*}

\noindent{\bf Step 3.}
Proof of
\begin{align}\label{AQ3.052}
|\nabla w_{i}(z)|\leq&C(|\psi^{i}(z',\varepsilon+h_{1}(z'))|\delta^{1-2/m}+|\nabla_{x'}(\psi^{i}(z',\varepsilon+h_{1}(z')))|)\notag\\
&+C\delta\|\psi^{i}\|_{C^{2}(\partial D_{1})},\quad\mathrm{for}\;z\in\Omega_{R}.
\end{align}

To begin with, we make a change of variables in the small narrow region $\Omega_{\delta}(z')$ as follows:
\begin{align*}
\begin{cases}
x'-z'=\delta y',\\
x_{d}=\delta y_{d}.
\end{cases}
\end{align*}
This transformation rescales $\Omega_{\delta}(z')$ into $Q_{1}$, where, for $0<r\leq 1$,
\begin{align*}
Q_{r}=\left\{y\in\mathbb{R}^{d}\,\Big|\,\frac{1}{\delta}h_{2}(\delta y'+z')<y_{d}<\frac{\varepsilon}{\delta}+\frac{1}{\delta}h_{1}(\delta y'+z'),\;|y'|<r\right\}.
\end{align*}
We denote the top and bottom boundaries of $Q_{r}$ by
\begin{align*}
\widehat{\Gamma}^{+}_{r}=&\left\{y\in\mathbb{R}^{d}\,\Big|\,y_{d}=\frac{\varepsilon}{\delta}+\frac{1}{\delta}h_{1}(\delta y'+z'),\;|y'|<r\right\},
\end{align*}
and
\begin{align*}
\widehat{\Gamma}^{-}_{r}=&\left\{y\in\mathbb{R}^{d}\,\Big|\,y_{d}=\frac{1}{\delta}h_{2}(\delta y'+z'),\;|y'|<r\right\},
\end{align*}
respectively. Similar to \eqref{KHW01}, we obtain that for $x\in\Omega_{\delta}(z')$,
\begin{align*}
|\delta(x')-\delta(z')|
\leq&2^{m-1}\kappa_{1}\delta(\delta^{m-1}+|z'|^{m-1})\notag\\
\leq&2^{m+1}\kappa_{1}\max\{1,\tau^{1/m-1}\}\delta^{2-1/m}.
\end{align*}
This implies that
\begin{align*}
\left|\frac{\delta(x')}{\delta(z')}-1\right|\leq2^{m+2}\max\{\tau^{1-1/m},1\}\kappa_{1}R^{m-1},
\end{align*}
which, together with the fact that $R$ is a small positive constant, reads that $Q_{1}$ is of nearly unit size as far as applications of Sobolev embedding theorems and classical $L^{p}$ estimates for elliptic systems are concerned.

Denote
\begin{align*}
W_{i}(y',y_d):=w_{i}(\delta y'+z',\delta y_d),\quad \widetilde{V}_{i}(y',y_d):=\tilde{v}_{i}(\delta y'+z',\delta y_d).
\end{align*}
Then $W_{i}(y)$ satisfies
\begin{align*}
\begin{cases}
\mathcal{L}_{\lambda,\mu}W_{i}=-\mathcal{L}_{\lambda,\mu}\widetilde{V}_{i},&
\hbox{in}\  Q_{1},  \\
W_{i}=0, \quad&\hbox{on} \ \widehat{\Gamma}^{\pm}_{1}.
\end{cases}
\end{align*}
In view of $W_{i}=0$ on $\widehat{\Gamma}^{\pm}_{1}$, it follows from the Poincar\'{e} inequality that
$$\|W_{i}\|_{H^1(Q_1)}\leq C\|\nabla W_{i}\|_{L^2(Q_1)}.$$
By making use of the Sobolev embedding theorem and classical $W^{2, p}$ estimates for elliptic systems, we deduce that for some $p>d$,
\begin{align*}
\|\nabla W_{i}\|_{L^{\infty}(Q_{1/2})}\leq C\|W_{i}\|_{W^{2,p}(Q_{1/2})}\leq C(\|\nabla W_{i}\|_{L^{2}(Q_{1})}+\|\mathcal{L}_{\lambda,\mu}\widetilde{V}_{i}\|_{L^{\infty}(Q_{1})}).
\end{align*}
Then back to $w_{i}$ and $\tilde{v}_{i}$, we get
\begin{align}\label{ADQ601}
\|\nabla w_{i}\|_{L^{\infty}(\Omega_{\delta/2}(z'))}\leq\frac{C}{\delta}\left(\delta^{1-\frac{d}{2}}\|\nabla w_{i}\|_{L^{2}(\Omega_{\delta}(z'))}+\delta^{2}\|\mathcal{L}_{\lambda,\mu}\tilde{v}_{i}\|_{L^{\infty}(\Omega_{\delta}(z'))}\right).
\end{align}
From \eqref{QQ.103} and \eqref{step2}, it follows that for $z'\in B'_{R}$,
\begin{align*}
\delta^{-\frac{d}{2}}\|\nabla w_{i}\|_{L^{2}(\Omega_{\delta}(z'))}\leq& C(|\psi^{i}(z',\varepsilon+h_{1}(z'))|\delta^{1-2/m}+|\nabla_{x'}\psi^{i}(z',\varepsilon+h_{1}(z'))|)\\
&+C\delta\|\psi^{i}\|_{C^{2}(\partial D_{1})},\\
\delta\|\mathcal{L}_{\lambda,\mu}\tilde{v}_{i}\|_{L^{\infty}(\Omega_{\delta}(z'))}\leq&C(|\psi^{i}(z',\varepsilon+h_{1}(z'))|\delta^{1-2/m}+|\nabla_{x'}\psi^{i}(z',\varepsilon+h_{1}(z'))|)\\
&+C\delta\|\psi^{i}\|_{C^{2}(\partial D_{1})},
\end{align*}
which, together with \eqref{ADQ601}, leads to that \eqref{AQ3.052} holds. That is, the proof of Proposition \ref{thm8698} is accomplished.

%\noindent{\bf{\large Statements.}} The author states that there is no conflict of interest. My manuscript has no associated data.

%\noindent{\bf{\large Acknowledgements.}} X. Hao is greatly indebted to Professor HaiGang Li for his constant encouragement and support. Z.W. Zhao would like to thank School of Mathematical Sciences at Beijing Normal University for the stimulating environment. Z.W. Zhao was partially supported by NSFC (11971061) and BJNSF (1202013).


\begin{thebibliography}{99}

\bibitem{ADN1959} S. Agmon, A. Douglis, and L. Nirenberg, Estimates near the boundary for solutions of elliptic partial differential equations satisfying general boundary conditions. I, Comm. Pure Appl. Math., 12 (1959), pp. 623-727.

\bibitem{ADN1964} S. Agmon, A. Douglis, and L. Nirenberg, Estimates near the boundary for solutions of elliptic partial differential equations satisfying general boundary conditions. II, Comm. Pure Appl. Math., 17 (1964), pp. 35-92.

\bibitem{ACKLY2013} H. Ammari, G. Ciraolo, H. Kang, H. Lee, and K. Yun, Spectral analysis of the Neumann-Poincar\'{e} operator and characterization of the stress concentration in anti-plane elasticity, Arch. Ration. Mech. Anal., 208 (2013), pp. 275-304.

\bibitem{AKLLL2007} H. Ammari, H. Kang, H. Lee, J. Lee, and M. Lim, Optimal estimates for the electrical field in two dimensions, J. Math. Pures Appl., 88 (2007), pp. 307-324.

\bibitem{AKL2005} H. Ammari, H. Kang, and M. Lim, Gradient estimates to the conductivity problem, Math. Ann., 332 (2005), pp. 277-286.

\bibitem{AKM2020} K. Ando, H. Kang, and Y. Miyanishi, Convergence rate for eigenvalues of the elastic Neumann-Poincar\'{e} operator in two dimensions, J. Math. Pures Appl., 140 (9) (2020), pp. 211-229.

\bibitem{BASL1999} I. Babu\u{s}ka, B. Andersson, P. Smith, and K. Levin, Damage analysis of fiber composites. I. Statistical analysis
on fiber scale, Comput. Methods Appl. Mech. Engrg., 172 (1999), pp. 27-77.

\bibitem{BC1984} B. Budiansky and G.F. Carrier, High shear stresses in stiff fiber composites, J. App. Mech., 51 (1984), pp. 733-735.

\bibitem{BJL2017} J.G. Bao, H.J. Ju and H.G. Li, Optimal boundary gradient estimates for Lam\'{e} systems with partially infinite coefficients, Adv. Math., 314 (2017), pp. 583-629.

\bibitem{BLL2015} J.G. Bao, H.G. Li and Y.Y. Li, Gradient estimates for solutions of the Lam\'{e} system with partially infinite coefficients, Arch. Ration. Mech. Anal., 215 (2015), pp. 307-351.

\bibitem{BLL2017} J.G. Bao, H.G. Li and Y.Y. Li, Gradient estimates for solutions of the Lam\'{e} system with partially infinite coefficients in dimensions greater than two, Adv. Math., 305 (2017), pp. 298-338.

\bibitem{BLY2009} E.S. Bao, Y.Y. Li and B. Yin, Gradient estimates for the perfect conductivity problem, Arch. Ration. Mech. Anal., 193 (2009), pp. 195-226.

\bibitem{BLY2010} E.S. Bao, Y.Y. Li and B. Yin, Gradient estimates for the perfect and insulated conductivity problems with multiple inclusions, Comm. Partial Differential Equations, 35 (2010), pp. 1982-2006.

\bibitem{BT2013} E. Bonnetier and F. Triki, On the spectrum of the Poincar\'{e} variational problem for two close-to-touching inclusions in 2D, Arch. Ration. Mech. Anal., 209 (2013), pp. 541-567.

\bibitem{BV2000} E. Bonnetier and M. Vogelius, An elliptic regularity result for a composite medium with ``touching'' fibers of circular cross-section, SIAM J. Math. Anal., 31 (2000), pp. 651-677.

\bibitem{CEG2014} V. M. Calo, Y. Efendiev, and J. Galvis, Asymptotic expansions for high-contrast elliptic equations, Math. Models Methods Appl. Sci., 24 (3) (2014), pp. 465-494.

\bibitem{CS2019} G. Ciraolo and A. Sciammetta, Gradient estimates for the perfect conductivity problem in anisotropic media, J. Math. Pures Appl., 127 (2019), pp. 268-298.

\bibitem{CS20192} G. Ciraolo and A. Sciammetta, Stress concentration for closely located inclusions in nonlinear perfect conductivity problems. J. Differential Equations, 266 (2019), pp. 6149-6178.

\bibitem{DL2019} H.J. Dong and H.G. Li, Optimal estimates for the conductivity problem by Green's function method, Arch. Ration. Mech. Anal., 231 (3) (2019), pp. 1427-1453.

\bibitem{FK1973} J.E. Flaherty and J.B. Keller, Elastic behavior of composite media, Comm. Pure. Appl. Math., 26 (1973), pp. 565-580.

\bibitem{GB2005} Y. Gorb and L. Berlyand, Asymptotics of the effective conductivity of composites with closely spaced inclusions of optimal shape, Quart. J. Mech. Appl. Math., 58 (1) (2005), pp. 84-106.

\bibitem{GN2012} Y. Gorb and A. Novikov, Blow-up of solutions to a $p$-Laplace equation, Multiscale Model. Simul. Vol., 10 (2012), pp. 727-743.

\bibitem{G2015} Y. Gorb, Singular behavior of electric field of high-contrast concentrated composites, Multiscale Model. Simul., 13 (2015), pp. 1312-1326.

\bibitem{GK1995} Y. Grabovsky and R.V. Kohn, Microstructures minimizing the energy of a two phase elastic composite in two space dimensions. II. The Vigdergauz microstructure, J. Mech. Phys. Solids, 43 (6) (1995), pp. 949-972.

\bibitem{G1996} Y. Grabovsky, Bounds and extremal microstructures for two-component composites: a unified treatment based on the translation method, Proc. Roy. Soc. London Ser. A, 452 (1947) (1996), pp. 919-944.

\bibitem{HL2018} Y.Y. Hou, H.J. Ju and H.G. Li, The convexity of inclusions and gradient's concentration for Lam\'{e} systems with partially infinite coefficients, arXiv:1802.01412.

\bibitem{KLY2013} H. Kang, M. Lim and K. Yun, Asymptotics and computation of the solution to the conductivity equation in the presence of adjacent inclusions with extreme conductivities, J. Math. Pures Appl., 99 (9) (2013), pp. 234-249.

\bibitem{KLY2014} H. Kang, M. Lim and K. Yun, Characterization of the electric field concentration between two adjacent spherical perfect conductors, SIAM J. Appl. Math., 74 (2014), pp. 125-146.

\bibitem{KLY2015} H. Kang, H. Lee and K. Yun, Optimal estimates and asymptotics for the stress concentration between closely located stiff inclusions, Math. Ann., 363 (3-4) (2015), pp. 1281-1306.

\bibitem{KY2019} H. Kang and S. Yu, Quantitative characterization of stress concentration in the presence of closely spaced hard inclusions in two-dimensional linear elasticity, Arch. Ration. Mech. Anal., 232 (2019), pp. 121-196.

\bibitem{KY2020} H. Kang and S. Yu, A proof of the Flaherty-Keller formula on the effective property of densely packed elastic composites, Calc. Var. Partial Differential Equations, 59 (1) (2020), Paper No. 22.

\bibitem{KL2019} J. Kim and M. Lim, Electric field concentration in the presence of an inclusion with eccentric core-shell geometry, Math. Ann., 373 (1-2) (2019), pp. 517-551.

\bibitem{K1993} J.B. Keller, Stresses in narrow regions, Trans. ASME J. APPl. Mech., 60 (1993), pp. 1054-1056.

\bibitem{L2012} J. Lekner, Electrostatics of two charged conducting spheres, Proc. R. Soc. Lond. Ser. A Math. Phys. Eng. Sci., 468 (2145) (2012), pp. 2829-2848.

\bibitem{Li2020} H.G. Li, Asymptotics for the electric field concentration in the perfect conductivity problem, SIAM J. Math. Anal., 52 (4) (2020), pp. 3350-3375.

\bibitem{L2018} H.G. Li, Lower bounds of gradient's blow-up for the Lam\'e system with partially infinite coefficients, J. Math. Pures Appl., 149 (2020), pp. 98-134.

%\bibitem{LL2020} H.G. Li; Y. Li, An extended Flaherty-Keller formula for an elastic composite with density packed convex inclusion. arXiv: 1912.13261v2.

\bibitem{LLBY2014} H.G. Li, Y.Y. Li, E.S. Bao and B. Yin, Derivative estimates of solutions of elliptic systems in narrow regions, Quart. Appl. Math., 72 (3) (2014), pp. 589-596.

\bibitem{LLY2019} H.G. Li, Y.Y. Li, and Z.L. Yang, Asymptotics of the gradient of solutions to the perfect conductivity problem, Multiscale Model. Simul., 17 (3) (2019), pp. 899-925.

%\bibitem{LWX2019} H.G. Li; F. Wang; L.J. Xu, Characterization of Electric Fields between two Spherical Perfect Conductors with general radii in 3D. J. Differential Equations, 267 (2019), 6644-6690.

%\bibitem{LX2017} H.G. Li; L.J. Xu, Optimal estimates for the perfect conductivity problem with inclusions close to the boundary. SIAM J. Math. Anal. 49 (2017), no. 4, 3125-3142.

\bibitem{LX2020} H.G. Li and L.J. Xu, Asymptotics of the stress concentration in high-contrast elastic composites, arXiv:2004.06310v2.

\bibitem{LZ2019} H.G. Li and Z.W. Zhao, Boundary blow-up analysis of gradient estimates for Lam\'{e} systems in the presence of $m$-convex hard inclusions, SIAM J. Math. Anal., 52 (4) (2020), pp. 3777-3817.

\bibitem{LY2009} M. Lim and K. Yun, Blow-up of electric fields between closely spaced spherical perfect conductors, Commun. Partial Differ. Equ., 34 (2009), pp. 1287-1315.

\bibitem{LN2003} Y.Y. Li and L. Nirenberg, Estimates for elliptic system from composite material, Comm. Pure Appl. Math., 56 (2003), pp. 892-925.

\bibitem{LV2000} Y.Y. Li and M. Vogelius, Gradient stimates for solutions to divergence form elliptic equations with discontinuous coefficients, Arch. Rational Mech. Anal., 153 (2000), pp. 91-151.

\bibitem{MMN2007} V.G. Maz'ya, A.B. Movchan, and M.J. Nieves, Uniform asymptotic formulae for Green's tensors in elastic singularly perturbed domains, Asymptot. Anal., 52 (2007), pp. 173-206.

\bibitem{MZ2021} C.X. Miao and Z.W. Zhao, Stress concentration factors for the Lam\'{e} system arising from composites, arXiv:2109.04271.

\bibitem{MZ202102} C.X. Miao and Z.W. Zhao, Singular analysis of the stress concentration in the narrow regions between the inclusions and the matrix boundary, arXiv:2109.04394.

\bibitem{V1986} S.B. Vigdergauz, Effective elastic parameters of a plate with a regular system of equal-strength holes, Inzhenernyi Zhurnal. Mekhanika Tverdogo Tela: MMT, 21 (1986), pp. 165-169.

\bibitem{Y2007} K. Yun, Estimates for electric fields blown up between closely adjacent conductors with arbitrary shape, SIAM J. Appl. Math., 67 (2007), pp. 714-730.

\bibitem{Y2009} K. Yun, Optimal bound on high stresses occurring between stiff fibers with arbitrary shaped cross-sections, J. Math. Anal. Appl., 350 (1) (2009), pp. 306-312.

\bibitem{ZH202101} Z.W. Zhao and X. Hao, Asymptotics for the concentrated field between closely located hard inclusions in all dimensions, Commun. Pure Appl. Anal. 20 (2021) 2379-2398.





\end{thebibliography}
\end{document}